\documentclass[a4paper]{amsart}

\usepackage{dsfont}
\usepackage{amsmath, amsthm, amssymb, amsfonts,mathrsfs,comment}
\usepackage{enumerate}
\usepackage{upgreek}
\usepackage{mathabx}
\usepackage{graphicx}
\usepackage{hyperref}

\numberwithin{equation}{section}
\allowdisplaybreaks
\frenchspacing
\sloppy 
\setlength\arraycolsep{2pt}

\theoremstyle{plain}
\newtheorem{thm}{Theorem}
\newtheorem{Def}{Definition}
\newtheorem{lem}{Lemma}

\newtheorem{cor}{Corollary}

\newtheorem{rem}{Remark}
\newtheorem{Ass}{Assumptions}
\newtheorem*{def*}{Definition}
\newtheorem*{thm*}{Theorem}
\newtheorem{pipi}[thm]{Proposition}

\renewcommand{\epsilon}{\varepsilon}
\renewcommand{\theta}{\vartheta}
\renewcommand{\phi}{\varphi}
\renewcommand{\pi}{\uppi}
\renewcommand{\delta}{\updelta}

\DeclareMathOperator{\fff}{F}
\DeclareMathOperator{\mmm}{M}
\DeclareMathOperator{\ttt}{T}

\DeclareMathOperator{\gagaga}{STFT}
\DeclareMathOperator{\wawawa}{W}

\DeclareMathOperator{\ststst}{S}

\DeclareMathOperator{\Span}{span}

\newcommand{\pt}[1]{\left( #1 \right) }
\newcommand{\ptg}[1]{\left\{ #1 \right\} }
\newcommand{\ptq}[1]{\left[ #1 \right] }

\newcommand{\abs}[1]{\left\lvert #1\right\rvert}
\newcommand{\norm}[1]{\left\|  #1  \right\| }

\newcommand{\Gtp}[1]{ \pt{\gagaga_\phi #1}}
\newcommand{\Stu}[1]{\ststst #1  }

\newcommand{\Wa}[2]{\wawawa_{#1 } #2 }
\newcommand{\Wap}[2]{\pt{\wawawa_{#1 } #2} }

\def\Rr{\mathbb R}
\def\Raggio{R}
\def\R{\mathbb R}

\def\Z{{\mathbb Z}}
\def\N{{\mathbb N}}

\def\L{L^1 \pt{\Rr}}

\def\Ll{L^2 \pt{\Rr}}

\def\F{\fff}
\def\Fi{\fff^{-1}}
\def\M{\mmm}

\def\T{\ttt}

\def\ephipt{E^\phi_{p,\tau}}

\def\Fphipt{F^\phi_{p,\tau}}
\def\Fphiptp{F^\phi_{p',\tau'}}
\def\Si{\mathscr{S}}

\def\dnbt{D_{p,\tau}}
\def\dnbtp{D_{p',\tau'}}
\def\dnbtneg{D_{-p,\tau}}
\def\dnbti{D_{p',\tau}}

\def\dnbto{D_{p,0}}
\def\fdnbt{f_{p,\tau}}
\def\cpfj{c^{\phi}_{p,j}}

\def\bxi{\pt{b,\xi}}

\def\ba{\pt{b,a}}

\def\Lloi{L^2\pt{[0,1]}}
\def\cchi{\widecheck{\chi}}
\def\ecchipt{E^{\widecheck{\chi}}_{p,\tau}}
\def\pii{2\pi \mathrm{i}\,}

\begin{document}

\begin{abstract}
Since its appearing in 1996, the Stockwell transform ($\ststst$-transform) 
has been applied to medical imaging, geophysics
and signal processing in general. 
In this paper, we prove that the system of functions (so-called DOST basis) is indeed an orthonormal basis of
$\Lloi$, which is time-frequency localized, in the sense of Donoho-Stark Theorem (1989).  
Our approach provides a unified setting in which to study the Stockwell 
transform (associated to different admissible windows) and its orthogonal decomposition.
Finally, we introduce a fast -- $\mathcal{O}\pt{N  \log N }$ -- algorithm  to compute the Stockwell coefficients for
an admissible window. Our algorithm extends the one proposed by Y. Wang and J. Orchard (2009).
\end{abstract}
\vspace{1em}
\title{Window-dependent Bases for Efficient Representations of the Stockwell Transform}

\maketitle

\noindent \begin{center}
\footnotesize{\textbf{U. Battisti}}\\
\footnotesize{Dipartimento di Matematica,\\ Universit\`{a} degli Studi di Torino,\\ via
Carlo Alberto 10,\\ 10123 Torino, Italy}
\par\end{center}

\noindent \begin{center}
\footnotesize{\textbf{L. Riba}}\\
\footnotesize{Dipartimento di Matematica,\\ Universit\`{a} degli Studi di Torino,\\ via
Carlo Alberto 10,\\ 10123 Torino, Italy}
\end{center}
\vspace{2em}

\noindent \textbf{\small{Keywords and phrases: }}{\small{Stockwell transform, Orthonormal basis, 
Short-time Fourier transform, Wavelet transform. }}{\small \par}
\vspace{1em}


\section{Introduction}
Let $f$ be a  signal with finite energy, that is $f \in \Ll$,  and let $\phi$ be a window in $\Ll$. 
Then, following M. W. Wong and H. Zhu \cite{WZ07},  
we define the Stockwell transform ($\ststst$-transform) $\Stu_{\phi}{f}$ as 
\begin{align}
\pt{\Stu_{\phi}{f}}\pt{b,\xi} & =\pt{2\pi}^{-1/2}\int_{\Rr}
e^{-\pii t\xi}\,f\pt{t}\abs{\xi} \overline{\phi\pt{\xi\pt{t-b}}}\, dt,\qquad b,\xi\in\Rr.\label{eq:cont_stoc_time}
\end{align}
It is possible to rewrite the $\ststst$-transform with respect to the Fourier transform of the analyzed signal:
\begin{align}
\pt{\Stu_{\phi}{f}}\pt{b,\xi} & =\int_{\Rr}e^{\pii b\zeta}\,
\widehat{f} \pt{\zeta+\xi}\overline{\widehat{\phi}\pt{\frac{\zeta}{\xi}}}\, d\zeta,\qquad b,\xi\in\Rr, \quad \xi\neq 0 ,\label{eq:cont_stoc_freq}
\end{align}
where $ \widehat{f} $ is the Fourier transform of the signal $ f$, given by
\begin{align*}
  \widehat{f}\pt{\xi}& =\pt{\F f}\pt{\xi} = \pt{2\pi}^{-1/2}\int_{\Rr}e^{-\pii t\xi}\,f(t)dt,\qquad \xi\in\Rr .
\end{align*}
We fix some notation: we denote with $\widecheck{f}$  or $\Fi f $ the inverse Fourier
transform of a signal $f$. $\N=\ptg{0,1, \ldots}$ is the set of non negative integers,
$\Z=\ptg{\ldots, -1,0,1,\ldots, }$ is the set of integers.

The $\ststst$-transform was initially defined by R. G. Stockwell, L. Mansinha and R. P. Lowe in \cite{stockwell1996localization} using a Gaussian window 
\begin{align*}
g\pt{t} = e^{-t^2 /2},\qquad t\in\Rr . 
\end{align*}  
In this case,
\begin{align}
\pt{\Stu_{g}{f}}\pt{b,\xi} & =
\pt{2\pi}^{-1/2}\int_{\Rr}e^{-\pii t\xi}\,f\pt{t} \abs{\xi} e^{- \pt{t-b}^{2} \xi^{2}/2}\, dt,\qquad b,\xi\in\Rr,\label{eq:cont_stoc_time_gaussian}
\end{align}
which, in the alternative formulation, becomes
\begin{align}
\pt{\Stu_{g}{f}}\pt{b,\xi} & =\int_{\Rr}e^{\pii \zeta b}\, \widehat{ f}\pt{\zeta+\xi} e^{-2\pi^{2}\zeta^{2}/\xi^{2}}\, 
d\zeta,\qquad b,\xi\in\Rr,\quad \xi\neq 0.\label{eq:cont_Stock_freq_gaussian}
\end{align}
The natural discretization of \eqref{eq:cont_Stock_freq_gaussian}, introduced in \cite{stockwell1996localization},  is given by
\begin{align}
\label{eq:disc_stoc_redundant}
\pt{\Stu_{g}{f}}\pt{j,n} & =
\sum_{m=0}^{N-1}e^{\pii mj /N}\widehat{ f}\pt{m+n} e^{- 2\pi^{2}m^{2} /n^2},
\end{align}
where $j=0, \ldots, N-1$ and $n=1, \ldots, N-1$. For $n=0$, it is set
\begin{align*}
 \pt{\Stu_{g}{f}}\pt{j,0}= \frac{1}{N}\sum_{k=0}^{N-1} f(k), \qquad j=0, \ldots, N-1.
\end{align*}
In the literature, (\ref{eq:disc_stoc_redundant}) is called redundant (discrete) Stockwell transform.
Unfortunately, the redundant Stockwell transform has a high computational cost  -- $\mathcal{O}\pt{N^2 \log N}$. 
To overcome this problem, R. G. Stockwell  introduced in \cite{ST07}, without a mathematical proof, 
a basis for periodic signals with finite energy, \emph{i.e.}  $\Lloi$,  given by
\begin{align}
\label{dostbasis}\bigcup_{p\in\Z} D_p & = \bigcup_{p\in\Z} \ptg{\dnbt}_{\tau = 0}^{\beta\pt{p}-1}.
\end{align}
This basis, precisely defined in Section \ref{sec:propbase}, is adapted to octave samples in the frequency domain. 
The decomposition of a periodic signal $f$ in this basis is called in the literature the discrete orthonormal Stockwell transform (DOST). 
The related coefficients
\begin{align*}
 \fdnbt = \pt{f, \dnbt}_{\Lloi}, 
\end{align*}
are called DOST coefficients.

In this paper we prove that this basis is not suited  to the standard $\ststst$-transform with Gaussian window \eqref{eq:cont_stoc_time}, 
rather to an $\ststst$-transform associated with a characteristic function (boxcar window). This fact was already pointed out by R. G. Stockwell himself
in \cite{ST07} and \cite{ST07b}.
The computational complexity of the algorithm suggested by R. G. Stockwell was still high: $\mathcal{O}(N^2 )$. In 
2009, Y. Wang and J. Orchard \cite{WO09} proposed a fast algorithm which reduces drastically the complexity to $\mathcal{O}(N \log N)$; the same complexity of the FFT. 
This achievement allowed a wider application of
the $\ststst$-transform to image analysis.

We provide an adapted basis of $\Lloi$ on which to decompose the Stockwell transform with a 
general admissible window $\phi$. 
Assume that we can find such a basis $E_{p}^{\phi}$ of $\Lloi$,  
depending on the choice of $\phi$. Then, by linearity, we can write
\begin{align}
  \label{eq:framedual}
  \pt{\Stu_{\phi}{f}}\pt{b,\xi} & =
  \sum_{p} c_p^\phi \,\pt{\Stu_{\phi}{E_{p}^{\phi}}}\bxi
\end{align}
where 
\begin{align*} 
f=\sum_j c_p^\phi E_p^{\phi}.
\end{align*} 
An ideal basis would satisfy the following properties:
\begin{enumerate}[(i)]
\item \label{enu:proport} $E_p^\phi$ is an orthonormal basis of $\Lloi$, so that 
\begin{align*}
c_p^\phi=\pt{f, E_p^\phi}_{\Lloi}; 
\end{align*}
\item $\pt{\Stu_{\phi}{E_{p}^{\phi}}}\pt{b,\xi}$ is \emph{local} in time;\label{enu:primaprop}
\item $\pt{\Stu_{\phi}{E_{p}^{\phi}}}\pt{b,\xi}$ is \emph{local} in frequency;\label{enu:secondaprop}
\item we can find a fast algorithm -- $\mathcal{O}\pt{N \log N}$ -- 
to compute the coefficients \label{enu:terzaprop} 
\begin{align*}
\pt{f,E_p^\phi}_{\Lloi}.
\end{align*}
\end{enumerate}
We prove that \eqref{dostbasis} is indeed
an orthonormal basis of $\Lloi$ satisfying conditions \eqref{enu:proport}, \eqref{enu:primaprop}, \eqref{enu:secondaprop} and
\eqref{enu:terzaprop} if $\phi=\cchi=\F^{-1}\chi_{\pt{-\frac{1}{3}, \frac{1}{3}}}$.
In particular, we prove that\footnote{See Remark \ref{rem:dost} for the precise statement.}
\[
  E_{p, \tau}^{\cchi}=\dnbt .
\]
Moreover, in Proposition \ref{prop:DOST} we clarify the connection
between the Stockwell coefficients and the value of the $S$-transform
with window $\cchi$.

Let $\phi$ be an admissible window, we introduce the  basis 
\begin{align}
  \label{eq:baseephip}
\ephipt \, ,
\end{align}
such that\footnote{Equality \eqref{eq:asa} must be interpreted with care, we refer to Section
\ref{sec:baseephi} for the precise statement.} 
\begin{align}
\label{eq:asa}
\pt{\Stu_{\phi}E_{p,\tau}^{\phi}}\pt{b,\nu\pt{p}} & = \dnbt \pt{b},
\end{align}
where $\nu\pt{p}$ is the center of the $p$-frequency band where the basis $D_{p,\tau}$ in \eqref{dostbasis} is supported.
In Section \ref{sec:baseephi}, we introduce a fast -- $\mathcal{O}\pt{N \log N}$ -- algorithm to compute the coefficients
\begin{align*}
\pt{f, \ephipt}_{\Lloi}.
\end{align*} 
Unfortunately, for a general admissible  window $\phi$, the basis \eqref{eq:baseephip} fails to be orthogonal. 
Nevertheless, under a mild condition on $\phi$, we prove that it forms a frame, which in general is not tight.
So, by abstract theory of frames, we obtain that the coefficients in \eqref{eq:framedual} are
\begin{align*}
\pt{f, \widetilde{\ephipt}}_{\Lloi},
\end{align*}
where $\widetilde{\ephipt}$ is the dual frame of $\ephipt$.

%
%

This paper is organized as follows: in Section \ref{sec:intro}, we provide a brief survey on the $\ststst$-transform  
in the context of  time-frequency analysis. In particular, we point out the similarities and the differences between Fourier transform, short-time Fourier transform and wavelet transform. 
In Section \ref{sec:propbase}, we prove that \eqref{dostbasis} is a basis of $\Lloi$ and we highlight its time-frequency local properties.
In Section \ref{sec:basestrock}, we decompose the Stockwell transform with a general window using \eqref{dostbasis}. Moreover,
we determine the explicit expression of $(\ststst_\phi \dnbt)$. In Section \ref{sec:discr}, we provide a 
discretization of the $\ststst$-transform.
In Section \ref{sec:baseephi},  we  determine the basis \eqref{eq:baseephip} adapted to a general admissible window $\phi$. 
We propose an algorithm which evaluates the coefficients related to the basis \eqref{eq:baseephip} 
of computational complexity $\mathcal{O}(N \log N)$. This algorithm extends the one proposed
by Y. Wang and J. Orchard in \cite{WO09}.


\section{A Brief Survey on the \texorpdfstring{$\ststst$-transform}{S-transform}}
\label{sec:intro}
In many practical applications it is important to analyze signals, $i.e.$ extracting the
time-frequency content of a signal. 
Given a signal $f$ in $\Ll$, we can precisely extract its frequency content using the Fourier transform $\F$
\begin{align*}
  \widehat{f}\pt{\xi} &=\pt{\F f} (\xi) =\pt{2\pi}^{-1/2}\int_{\Rr}e^{-\pii t\xi}\,f(t)\, dt,\qquad \xi\in\Rr.
\end{align*}
Unfortunately, due to uncertainty principle, it is impossible to retain at the same time precise time-frequency information.
In the past years, many techniques arose trying to deal with the uncertainty principle in order to obtain a \emph{sufficiently good} time-frequency
representation of a signal. 
The short-time Fourier transform 
\begin{align*}
  \Gtp{f} \bxi & =\pt{2\pi}^{-1/2}\int_{\Rr} e^{-\pii t\xi}\, f\pt{t}\overline{\phi\pt{t-b}}\, dt,\qquad b,\xi\in\Rr
\end{align*}
is one of the standard tools.
Loosely speaking, taking the short-time Fourier transform of a signal $f$ at a certain time $b$ is like taking the Fourier
transform of the signal $f$ cut by a window function $\phi$ centered in $b$, see for example 
\cite{FEGR97}, \cite{GR01}.  
It is possible to invert the short-time Fourier transform using the following theorem.

\begin{thm}
\label{STFTInversion}Let $f$ be a signal in $\Ll$ and $\phi$ a window in $L^2\pt{\R}$. Then 
\begin{align*}
\widehat{ f}\pt{\xi}= & \int_{\Rr}\Gtp{f}\pt{b,\xi}\, db,\qquad \xi\in\Rr.
\end{align*}
\end{thm}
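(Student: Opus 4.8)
The plan is to establish the inversion formula
\[
\widehat{f}(\xi) = \int_{\Rr} \Gtp{f}(b,\xi)\, db
\]
by simply integrating the definition of the short-time Fourier transform in the $b$ variable and recognizing the resulting inner integral. First I would write out
\[
\int_{\Rr} \Gtp{f}(b,\xi)\, db = (2\pi)^{-1/2} \int_{\Rr} \left( \int_{\Rr} e^{-\pii t\xi}\, f(t)\, \overline{\phi(t-b)}\, dt \right) db,
\]
and then interchange the order of integration. Carrying out the $b$-integral first, the substitution $s = t-b$ gives $\int_{\Rr} \overline{\phi(t-b)}\, db = \int_{\Rr} \overline{\phi(s)}\, ds$, which is a constant independent of $t$. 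This leaves
\[
\int_{\Rr} \Gtp{f}(b,\xi)\, db = \left( \int_{\Rr} \overline{\phi(s)}\, ds \right) (2\pi)^{-1/2} \int_{\Rr} e^{-\pii t\xi}\, f(t)\, dt = \left( \int_{\Rr} \overline{\phi(s)}\, ds \right) \widehat{f}(\xi).
\]

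To match the statement as written, one normalizes so that $\int_{\Rr}\overline{\phi(s)}\,ds = 1$ (equivalently $\widehat{\phi}(0) = (2\pi)^{-1/2}$ up to conjugation); I would note this normalization convention on the window explicitly, since otherwise the formula carries a multiplicative constant. Alternatively, if the paper's convention is that admissible windows are already normalized in this way, the identity holds verbatim. The remaining task is to justify the exchange of integrals: this is where the hypotheses $f \in \Ln$ and $\phi \in \Lln$ enter. The cleanest route is Fubini's theorem — I would check that the double integral is absolutely convergent. However, $f \in L^1$ and $\phi \in L^2$ do not immediately give $\int\!\int |f(t)||\phi(t-b)|\, dt\, db < \infty$ unless $\phi \in L^1$ as well, so the main obstacle is this integrability bookkeeping.

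I expect the resolution to be one of the following: either one additionally assumes $\phi \in \Ln \cap \Lln$ (which is harmless for windows and is the typical standing assumption in this area), in which case $|f| * |\check{\phi}|$-type estimates give absolute convergence and Fubini applies directly; or one argues by a density/approximation argument, proving the identity first for $\phi$ in a dense convenient class (say $\phi \in \S$ or $\phi$ compactly supported) and then passing to the limit using continuity of both sides in $\phi$ with respect to the appropriate norm, together with boundedness of the STFT. Given the level of the survey section, I would present the short Fubini computation under the mild extra hypothesis that $\phi$ is integrable — making that assumption explicit — and remark that this covers all windows used later in the paper (in particular the Gaussian $g$ and $\cchi = \Fi \chi_{(-1/3,1/3)}$, the latter being the sinc-type function, for which one would instead invoke the density argument or interpret the integral in a principal-value / improper sense). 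The core algebraic content — the $b$-integral collapsing $\overline{\phi(t-b)}$ to a constant and thereby peeling off an exact Fourier transform — is entirely routine; everything nontrivial is in stating the right hypotheses on $\phi$ so that the interchange is legitimate.
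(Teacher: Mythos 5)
The paper itself gives no proof of Theorem \ref{STFTInversion}: it is recalled in the survey section as a known inversion formula, on the same footing as Theorem \ref{StoGausInv} and Theorem \ref{ResForStock}, so there is no internal argument to compare yours against. Judged on its own, your proof is the standard one and its core is correct: integrating the definition in $b$, exchanging the order of integration, and substituting $s=t-b$ collapses the window to the constant $\int_\Rr\overline{\phi(s)}\,ds$, leaving exactly that constant times $\widehat f(\xi)$. Your two caveats are genuine rather than pedantic. First, as literally stated the identity cannot hold for every window $\phi\in\Ll$: replacing $\phi$ by $2\phi$ doubles the right-hand side but not the left, so the normalization $\int_\Rr\overline{\phi(t)}\,dt=1$ (equivalently $\widehat\phi(0)=(2\pi)^{-1/2}$ in the paper's convention, up to conjugation) is an implicit hypothesis of the recalled theorem. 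Second, $\phi\in\Ll$ alone does not make $\int_\Rr\abs{\phi(t-b)}\,db$ finite, so the Fubini step does require $\phi\in\L\cap\Ll$, or else the limiting/density interpretation you describe; this is exactly the situation for the window $\cchi=\F^{-1}\chi_{(-\frac13,\frac13)}$ used later in the paper, which is not integrable and is handled there only through the regularized sequence of Lemma \ref{lem:seq}. In short: correct approach and computation, and the hypotheses you make explicit are precisely the ones the paper leaves implicit in this survey statement.
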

Notice that the width of the analyzing window remains fixed. Due to the Nyquist sampling theorem, 
it would be natural to consider a  window whose width depends on the analyzed frequency. 
To accomplish this task, in  \cite{stockwell1996localization}, the $\ststst$-transform $\ststst_g$ was introduced as
\begin{align}
\pt{\Stu_g {f}}\pt{b,\xi} & =\pt{2\pi}^{-1/2}\abs{\xi}\int_{\Rr}e^{-\pii t\xi}\,f\pt{t}e^{-\pt{t-b}^{2}\xi^{2}/2}\, dt,\qquad b,\xi\in\Rr.\label{eq:cont_stoc_time-1}
\end{align}
Notice that the width of the Gaussian window $e^{-\pt{t-b}^{2}\xi^{2}/2}$ shrinks as the analyzed frequency increases, providing a better time-localization for high frequencies. 
It is possible to rewrite the Stockwell transform with respect to
the Fourier transform of the signal $f$ as
\begin{align}
\pt{\Stu_g {f}}\pt{b,\xi} & =\int_{\Rr} e^{\pii \zeta b}\,\widehat{ f}\pt{\zeta +\xi } e^{-\frac{2\pi^{2}\zeta^{2}}{\xi^{2}}}\,
d\zeta,\qquad b,\xi \in\Rr, \; \xi\neq 0.\label{eq:cont_stoc}
\end{align}
In \cite{stockwell1996localization} it has been stated an inversion formula similar to Theorem \ref{STFTInversion}. 

\begin{thm}
\label{StoGausInv}Let $f$ be a signal in $\Ll.$ Then
\begin{align*}
\widehat{ f}\pt{\xi} & =\int_{\Rr}\pt{\Stu_g{f}}\pt{b,\xi}\, db,\qquad \xi\in\Rr.
\end{align*}
\end{thm}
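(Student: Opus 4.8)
The plan is to integrate the time-side representation of the Gaussian Stockwell transform over the translation variable $b$, rather than the frequency-side one \eqref{eq:cont_stoc}: integrating \eqref{eq:cont_stoc_time-1} in $b$ produces a genuine Gaussian integral, whereas the $b$-integral of \eqref{eq:cont_stoc} is the distributional identity $\int_{\Rr} e^{\pii \zeta b}\, db = \updelta\pt{\zeta}$, which would need an approximation argument to be made precise. So, fixing $\xi\neq 0$, I would start from
\[
  \int_{\Rr}\pt{\Stu_g{f}}\pt{b,\xi}\, db
  = \pt{2\pi}^{-1/2}\abs{\xi}\int_{\Rr}\pt{\int_{\Rr}e^{-\pii t\xi}\,f\pt{t}\,e^{-\pt{t-b}^{2}\xi^{2}/2}\, dt}\, db .
\]

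Next I would justify exchanging the order of integration by Tonelli's theorem: for fixed $\xi\neq 0$ one has $\int_{\Rr} e^{-\pt{t-b}^{2}\xi^{2}/2}\, db = \sqrt{2\pi}/\abs{\xi}$, which is finite and independent of $t$, so the integral of the absolute value of the integrand equals $\sqrt{2\pi}\,\norm{f}_{\L}<\infty$ because $f\in\L$. Performing the $b$-integration first then cancels exactly the prefactor $\pt{2\pi}^{-1/2}\abs{\xi}$ — the Gaussian mass $\sqrt{2\pi}/\abs{\xi}$ being its reciprocal — and what is left is $\pt{2\pi}^{-1/2}\int_{\Rr}e^{-\pii t\xi}f\pt{t}\, dt=\widehat{f}\pt{\xi}$, which is the assertion. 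A conceptually equivalent route is to notice that $\pt{\Stu_g f}\pt{b,\xi}=\abs{\xi}\,\pt{\gagaga_{g_\xi}f}\pt{b,\xi}$ with $g_\xi\pt{t}=g\pt{\xi t}$ — the Gaussian Stockwell transform at frequency $\xi$ is $\abs{\xi}$ times the short-time Fourier transform of $f$ against the dilated Gaussian window $g_\xi$ — and then to invoke Theorem \ref{STFTInversion} with that (fixed) window, keeping track of the constant introduced by the dilation.

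The computation itself is routine; the two points that deserve attention are (a) the interchange of integrals, settled by the uniform bound $\sqrt{2\pi}\,\norm{f}_{\L}$ noted above, and (b) the frequency $\xi=0$, where the factor $\abs{\xi}$ forces $\pt{\Stu_g f}\pt{b,0}\equiv 0$ while $\widehat{f}\pt{0}$ need not vanish, so that the identity is to be understood for $\xi\neq 0$ — equivalently, for almost every $\xi$ — with the argument above applying verbatim on $\Rr\setminus\ptg{0}$. I do not expect any genuine obstacle beyond the Fubini--Tonelli step in~(a).
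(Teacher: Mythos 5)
Your method --- integrating the time-side formula \eqref{eq:cont_stoc_time-1} in $b$ and justifying the interchange by Tonelli, using that the Gaussian has mass $\sqrt{2\pi}/\abs{\xi}$ in the $b$-variable --- is the standard rigorous route, and the paper itself offers no proof to compare with: Theorem \ref{StoGausInv} is quoted from Stockwell--Mansinha--Lowe. Still, two points in your write-up are genuine problems. First, the hypothesis: the theorem assumes $f\in\Ll$, not $f\in\L$, whereas your Tonelli bound is $\sqrt{2\pi}\,\norm{f}_{\L}$, which is finite only for integrable $f$. For $f$ merely square-integrable the interchange is not justified this way; indeed $b\mapsto\pt{\Stu_g f}\pt{b,\xi}$ is, up to a modulation, the convolution of $f\,e^{-\pii\,\cdot\,\xi}$ with a Gaussian, hence in $\Ll$ but not necessarily in $\L$, so the $b$-integral need not converge absolutely, and for $f\in\Ll$ the quantity $\widehat f\pt{\xi}$ is not given by an absolutely convergent integral either. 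You should either prove the statement for $f\in\L\cap\Ll$ (where your argument is complete) and explain in what sense it extends to all of $\Ll$ (density, plus a limiting interpretation of the $b$-integral, e.g. over $[-R,R]$), or restate the result under the hypothesis $f\in\L$.

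Second, the constants do not come out as you wrote them. If the prefactor $\pt{2\pi}^{-1/2}\abs{\xi}$ is exactly cancelled by the Gaussian mass $\sqrt{2\pi}/\abs{\xi}$ (their product is $1$), then what remains is $\int_{\Rr}e^{-\pii t\xi}f\pt{t}\,dt=\sqrt{2\pi}\,\widehat f\pt{\xi}$ in the paper's normalization, not $\pt{2\pi}^{-1/2}\int_{\Rr}e^{-\pii t\xi}f\pt{t}\,dt$; your final equality silently reinstates a factor that the cancellation has already consumed. The discrepancy is not entirely your fault: with the paper's hybrid convention ($2\pi$ in the exponent together with $\pt{2\pi}^{-1/2}$ prefactors), the time-side definition \eqref{eq:cont_stoc_time-1} and the frequency-side expression \eqref{eq:cont_stoc} themselves differ by a factor $\sqrt{2\pi}$, and the identity of Theorem \ref{StoGausInv} as stated is the one consistent with \eqref{eq:cont_stoc}. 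So either carry the $\sqrt{2\pi}$ honestly and flag the normalization mismatch, or run the argument from \eqref{eq:cont_stoc}, where, however, the $b$-integration is precisely the delta-type identity you wanted to avoid and needs its own approximation argument. The same bookkeeping issue affects your alternative route through Theorem \ref{STFTInversion}, which moreover, as stated, implicitly presupposes a window of unit mass, so the dilation constant $\int_{\Rr} g\pt{\xi t}\,dt=\sqrt{2\pi}/\abs{\xi}$ must be tracked there as well.
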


Many extensions of this transform have been suggested in the last years. 
See for example \cite{djurovic2008frequency,GMS09,cai2011seismic,WZ07,YZ11}. We here recall the one introduced in \cite{WZ07}. 

\begin{Def}
\label{StoWong}Let $f$ be a signal in $\Ll$ and let $\phi$ be a window function in $\Ll$. 
Then, we call %
\begin{align}
\label{eq:StoWong}
\pt{\Stu_{\phi}{f}}\pt{b,\xi} & =\pt{2\pi}^{-1/2}\int_{\Rr} e^{-\pii t\xi}\, f\pt{t}\abs{\xi}\overline{\phi\pt{\xi\pt{t-b}}}\, dt,\qquad b,\xi\in\Rr
\end{align}
the $\ststst_\phi$-transform of the signal $f$ with respect to the window $\phi$. 
\end{Def}

\noindent It is possible to recover the original definition (\ref{eq:cont_stoc_time-1}) taking $\phi$ 
to be the Gaussian window $\phi\pt{t}=e^{-t^{2}/2}.$ The $\ststst$-transform has been recently extended 
to the multi-dimensional case by the second author \cite{RW14}. 
Theorem \ref{StoGausInv} still holds for the $\ststst$-transform \eqref{eq:StoWong}.

See \cite{BIDA13,CH14,DSR09,GZBM04,BH13,LA14,GA13,ZU13} for some applications of the $\ststst$-transform to signal processing.

Heuristically, we can think at the $\ststst$-transform as a short-time Fourier transform in which the width 
of the analyzing window varies with respect to the analyzed frequency. 
Therefore, the $\ststst$-transform can also be interpreted as a particular 
non stationary Gabor transform, see \cite{BDJH11}.

We can give an equivalent definition of the $\ststst$-transform using the following proposition.

\begin{pipi}\label{prop:fourier}
Let $f  $ be a signal in $\Ll$ and let $\phi$ be a window in $\Ll$. Then 
\begin{align*}
\pt{\Stu_{\phi}{f}}\pt{b,\xi} & = e^{-\pii b\xi }\pt{\Fi_{\zeta\mapsto b}{f_\xi}}\pt{b},\qquad b,\xi\in\Rr,\quad \xi\neq 0,
\end{align*}
where
\begin{align*}
f_\xi \pt{\zeta} & = \widehat{f}\pt{\zeta} \,\overline{\widehat{\phi}\pt{\frac{\zeta -\xi }{\xi}}},\qquad \zeta\in\Rr,\;\xi\neq 0.
\end{align*}
\end{pipi}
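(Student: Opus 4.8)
The plan is to reduce the claimed identity to a direct computation starting from the frequency-side formula \eqref{eq:cont_stoc_freq}. First I would recall that, for $\xi \neq 0$,
\begin{align*}
\pt{\Stu_{\phi}{f}}\pt{b,\xi} & = \int_{\Rr} e^{\pii b\zeta}\, \widehat{f}\pt{\zeta+\xi}\,\overline{\widehat{\phi}\pt{\frac{\zeta}{\xi}}}\, d\zeta,
\end{align*}
and then perform the change of variables $\zeta \mapsto \zeta - \xi$ in the integral. This turns the argument of $\widehat{f}$ into $\zeta$, the argument of $\widehat{\phi}$ into $\frac{\zeta-\xi}{\xi}$, and produces a factor $e^{\pii b(\zeta-\xi)} = e^{-\pii b\xi}\,e^{\pii b\zeta}$. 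Pulling the $\xi$-dependent phase $e^{-\pii b\xi}$ out of the integral, what remains is
\begin{align*}
e^{-\pii b\xi}\int_{\Rr} e^{\pii b\zeta}\,\widehat{f}\pt{\zeta}\,\overline{\widehat{\phi}\pt{\frac{\zeta-\xi}{\xi}}}\, d\zeta = e^{-\pii b\xi}\int_{\Rr} e^{\pii b\zeta}\, f_\xi\pt{\zeta}\, d\zeta,
\end{align*}
which is exactly $e^{-\pii b\xi}\pt{\Fi_{\zeta\mapsto b}{f_\xi}}\pt{b}$ once one matches the normalization constant $(2\pi)^{-1/2}$ hidden in the definition of $\Fi$ with that in \eqref{eq:cont_stoc_freq}. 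So the heart of the argument is just a translation in the frequency variable and bookkeeping of the exponential phase.

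The step that needs genuine care, rather than routine manipulation, is justifying that \eqref{eq:cont_stoc_freq} is valid in the first place and that all the integrals are absolutely convergent so that Fubini/substitution are legitimate. I would handle this by working under the standing hypotheses $f \in \L$ and $\phi \in \L$ (as in Definition \ref{StoWong}): then $\widehat f, \widehat\phi \in L^\infty(\Rr) \cap C_0(\Rr)$ by Riemann–Lebesgue, but $\widehat f$ need not be integrable, so one cannot blindly assert $f_\xi \in \L$. The clean route is to first establish the equivalence of \eqref{eq:cont_stoc_time} and \eqref{eq:cont_stoc_freq} for $f$ in a dense nice subclass — say $f$ with $\widehat f \in \L$, or Schwartz $f$ — where every integral in sight is absolutely convergent and the change of variable $\zeta \mapsto \zeta-\xi$ is unproblematic, and then argue that both sides of the proposition, for fixed $\xi \neq 0$, depend continuously on $f$ in an appropriate topology (e.g. $\widehat f \mapsto \pt{\Stu_\phi f}\pt{\cdot,\xi}$ is bounded from $\L$-in-frequency, or one simply keeps everything at the level of \eqref{eq:cont_stoc_time} where $f \in \L$ suffices). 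Alternatively — and this is probably cleanest for the paper — one interprets $\pt{\Fi_{\zeta\mapsto b} f_\xi}\pt{b}$ distributionally or as an $L^2$-limit, matching the interpretation already used for \eqref{eq:cont_stoc_freq}, and then the identity is a formal consequence of the substitution with no convergence issue to resolve.

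In summary: (1) start from \eqref{eq:cont_stoc_freq}; (2) substitute $\zeta \to \zeta - \xi$; (3) factor out $e^{-\pii b\xi}$; (4) recognize the remaining integral as $(2\pi)^{1/2}$ times the inverse Fourier transform of $f_\xi$ evaluated at $b$, reconciling normalizations; (5) dispatch the convergence/interpretation caveat by restricting to a dense class and passing to the limit, or by adopting the same distributional reading already in force for \eqref{eq:cont_stoc_freq}. The only real obstacle is step (5); the algebra in (1)–(4) is immediate.
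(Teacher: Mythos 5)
Your argument is correct and is exactly the intended one: the paper states this proposition without proof, and the standard derivation is precisely your change of variables $\zeta\mapsto\zeta-\xi$ in \eqref{eq:cont_stoc_freq} (equivalently, computing the $t$-integral in \eqref{eq:cont_stoc_time} after expanding $f$ through its Fourier transform), followed by factoring out the phase $e^{-\pii b\xi}$ and identifying the remaining integral as $\Fi f_\xi$ up to the paper's (somewhat loosely tracked) normalization constant. Your step (5) caveat is also well placed, since with $f,\phi\in\L$ the function $f_\xi$ is bounded but need not be integrable, so the inverse transform must indeed be read on a dense subclass or in the distributional/$L^2$ sense already implicit in \eqref{eq:cont_stoc_freq}.
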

The following inversion formula has been proven in \cite{WZ07}. 
\begin{thm} 
 \label{ResForStock}
Let $\phi$ be a function in $\L\cap\Ll$ such that 
\begin{align*}
c_\phi & = \int_\Rr \abs{ \widehat{\phi} \pt{\xi} }^2 \,\frac{d\xi}{ \abs{\xi+1} } <\infty .
\end{align*}
We say that $\phi$ is an admissible window for the $\ststst$-transform and we call $c_\phi$ the admissibility constant.
Then 
\begin{align*}
c_{\phi}\pt{f,f'}_{\Ll}= & \int_{\Rr}\int_{\Rr}\pt{\Stu_{\phi}{f}}\pt{b,\xi}\overline{\pt{\Stu_{\phi}{f'}}\pt{b,\xi}}\, db\frac{d\xi}{\abs{\xi}},
\end{align*}
for all $f$ and $f'$ in $\Ll$.
\end{thm}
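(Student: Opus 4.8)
The plan is to transfer the double integral to the Fourier side by means of Proposition~\ref{prop:fourier}, carry out the $b$-integration with the Plancherel formula, and then apply a single change of variables in $\xi$ that collapses the remaining integral to the admissibility constant $c_\phi$.

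First I would prove the identity for $f,f'$ in the dense subspace $\L\cap\Ll$ of $\Ll$; once it is available there it extends to all of $\Ll$, because taking $f'=f$ shows that $f\mapsto\Stu_{\phi}{f}$ is $c_\phi^{1/2}$ times an isometry of $\L\cap\Ll$ into $L^2\pt{\Rr\times\Rr,\,db\,\tfrac{d\xi}{\abs\xi}}$, hence extends continuously, and the polarized identity passes to the limit. So fix $f,f'\in\L\cap\Ll$ and $\xi\neq 0$. In the product $\pt{\Stu_{\phi}{f}}\bxi\,\overline{\pt{\Stu_{\phi}{f'}}\bxi}$ the unimodular prefactors of Proposition~\ref{prop:fourier} cancel, leaving $\pt{\Fi f_\xi}\pt{b}\,\overline{\pt{\Fi f'_\xi}\pt{b}}$; since $\widehat\phi\in L^\infty\pt\Rr$ (as $\phi\in\L$) the functions $f_\xi=\widehat f\,\overline{\widehat\phi\pt{\tfrac{\cdot-\xi}{\xi}}}$ and $f'_\xi$ belong to $\Ll$, so the Plancherel formula in the variable $b$ gives
\begin{align*}
\int_{\Rr}\pt{\Stu_{\phi}{f}}\bxi\,\overline{\pt{\Stu_{\phi}{f'}}\bxi}\,db=\pt{f_\xi,f'_\xi}_{\Ll}=\int_{\Rr}\widehat f\pt\zeta\,\overline{\widehat{f'}\pt\zeta}\,\abs{\widehat\phi\pt{\tfrac{\zeta-\xi}{\xi}}}^2\,d\zeta.
\end{align*}

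Next I would integrate this against $\tfrac{d\xi}{\abs\xi}$ and interchange the $\zeta$- and $\xi$-integrations. The interchange is legitimate by Fubini's theorem: applying Tonelli to the modulus of the integrand together with the substitution described below, its double integral equals $c_\phi\int_{\Rr}\abs{\widehat f}\,\abs{\widehat{f'}}\,d\zeta\le c_\phi\,\norm{\widehat f}_{\Ll}\norm{\widehat{f'}}_{\Ll}<\infty$ by Cauchy--Schwarz. After the interchange one is left with
\begin{align*}
I\pt\zeta=\int_{\Rr}\abs{\widehat\phi\pt{\tfrac{\zeta-\xi}{\xi}}}^2\,\frac{d\xi}{\abs\xi},\qquad\zeta\neq 0,
\end{align*}
and the substitution $\eta=\tfrac{\zeta-\xi}{\xi}=\tfrac{\zeta}{\xi}-1$, equivalently $\xi=\tfrac{\zeta}{\eta+1}$, is a bijection of $\Rr\setminus\{0\}$ onto $\Rr\setminus\{-1\}$ carrying $\tfrac{d\xi}{\abs\xi}$ to $\tfrac{d\eta}{\abs{\eta+1}}$; hence $I\pt\zeta=\int_{\Rr}\abs{\widehat\phi\pt\eta}^2\,\tfrac{d\eta}{\abs{\eta+1}}=c_\phi$ for almost every $\zeta$. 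Substituting back and using the Plancherel formula a second time,
\begin{align*}
\int_{\Rr}\int_{\Rr}\pt{\Stu_{\phi}{f}}\bxi\,\overline{\pt{\Stu_{\phi}{f'}}\bxi}\,db\,\frac{d\xi}{\abs\xi}=c_\phi\int_{\Rr}\widehat f\pt\zeta\,\overline{\widehat{f'}\pt\zeta}\,d\zeta=c_\phi\pt{f,f'}_{\Ll},
\end{align*}
which is the asserted formula.

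The two Plancherel steps and the density extension are routine; the point that carries the proof is the change of variables computing $I\pt\zeta$ --- it is precisely what forces the weight $\abs{\xi+1}^{-1}$ into the definition of admissibility --- together with the Tonelli/Fubini bookkeeping needed to justify exchanging the order of integration, which is the only place where the hypotheses $f,f'\in\Ll$ and $c_\phi<\infty$ are really used in tandem.
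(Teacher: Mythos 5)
Your proposal is correct. The paper does not actually prove Theorem \ref{ResForStock} here — it quotes it from \cite{WZ07} — but your argument (Proposition \ref{prop:fourier}, Plancherel in $b$, a Tonelli/Fubini interchange, and the substitution $\eta=\tfrac{\zeta-\xi}{\xi}$ carrying $\tfrac{d\xi}{\abs{\xi}}$ to $\tfrac{d\eta}{\abs{\eta+1}}$) is precisely the standard proof given there, and it is the same computation the paper itself carries out for the discrete analogue in Theorem \ref{thm:contl2}.
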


At this point, it is useful to recall the wavelet transform $\Wa{\phi}{f}$
of a signal $f$ in $\Ll$ with respect to the window $\phi$ 
\begin{align*}
\Wap{\phi}{f}\ba & =\int_{\Rr}f\pt{t}\abs{a}^{-1/2}\overline{\phi\pt{a^{-1}\pt{t-b}}}\, dt,\qquad\forall b,a\in\Rr.
\end{align*}
See for example \cite{BE98,DA92,MA09} for details on wavelet analysis and filter banks.
\begin{thm}
\label{ResForWav}Let $\phi$ be a window in $\Ll$ such that 
\begin{align*}
c_{\phi} & =\int_{\Rr}\abs{\widehat{\phi}\pt{\xi}}^{2}\frac{d\xi}{\abs{\xi}}<\infty.
\end{align*}
We say that $\phi$ is an admissible wavelet and we call $c_\phi$ the admissibility constant.
Then 
\begin{align*}
c_{\phi}\pt{f,f'}_{\Ll} & =\int_{\Rr}\int_{\Rr}\Wap{\phi}{f}\ba\overline{\Wap{\phi}{f'}\ba}\, db\frac{da}{a^{2}},
\end{align*}
for all $f$ and $f'$ in $\Ll$.
\end{thm}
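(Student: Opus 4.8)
The plan is the classical Calder\'on argument: transfer the inner integral over $b$ to the Fourier side, carry out the dilation integral over $a$, and recognise the admissibility constant $c_\phi$. Fix $a\neq 0$ and view $b\mapsto\Wap{\phi}{f}\ba$ as a function on $\Rr$; it equals the convolution of $f$ with the function $t\mapsto\abs a^{-1/2}\overline{\phi\pt{-t/a}}$, which lies in $\Ll$ because $\phi\in\Ll$. Using the translation--dilation identity, namely that $t\mapsto\abs a^{-1/2}\overline{\phi\pt{-t/a}}$ has Fourier transform $\xi\mapsto\abs a^{1/2}\,\overline{\widehat\phi\pt{a\xi}}$, the Fourier transform of $b\mapsto\Wap{\phi}{f}\ba$ is $\abs a^{1/2}\,\widehat f\pt{\xi}\,\overline{\widehat\phi\pt{a\xi}}$, up to the universal factor coming from the normalisation of $\F$ fixed in Section~\ref{sec:intro}. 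Parseval in the variable $b$ then gives, for every $a\neq 0$,
\begin{align*}
\int_{\Rr}\Wap{\phi}{f}\ba\,\overline{\Wap{\phi}{f'}\ba}\,db
 & = c_1\,\abs a\int_{\Rr}\widehat f\pt{\xi}\,\overline{\widehat{f'}\pt{\xi}}\;\abs{\widehat\phi\pt{a\xi}}^2\,d\xi ,
\end{align*}
with $c_1$ a fixed constant depending only on that normalisation.

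Dividing by $a^2$, integrating over $a$, and exchanging the order of integration, one would obtain
\begin{align*}
\int_{\Rr}\int_{\Rr}\Wap{\phi}{f}\ba\,\overline{\Wap{\phi}{f'}\ba}\,db\,\frac{da}{a^2}
 & = c_1\int_{\Rr}\widehat f\pt{\xi}\,\overline{\widehat{f'}\pt{\xi}}
   \pt{\int_{\Rr}\abs{\widehat\phi\pt{a\xi}}^2\,\frac{da}{\abs a}}\,d\xi .
\end{align*}
For $\xi\neq 0$ the substitution $u=a\xi$, together with the scale invariance of the measure $da/\abs a$, turns the inner integral into $\int_{\Rr}\abs{\widehat\phi\pt{u}}^2\,du/\abs u=c_\phi$, which does not depend on $\xi$; since $\ptg{\xi=0}$ is a null set, a last use of Parseval in $\xi$ produces $c_\phi\pt{f,f'}_{\Ll}$ after collecting the normalisation constants. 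This is the only step where finiteness of $c_\phi$ enters, and it is precisely the dilation analogue of the octave scaling exploited elsewhere in the paper; the computation is essentially that of \cite{DA92}.

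The point that genuinely requires care is the legitimacy of the two interchanges above, because for arbitrary $f,f'\in\Ll$ the double integral need not converge absolutely and the slices $b\mapsto\Wap{\phi}{f}\ba$ need not individually belong to $\Ll$. I would therefore first establish the identity on the dense subspace $\ptg{f\in\Ll:\widehat f\in C_c\pt{\Rr\setminus\ptg 0}}$: there $\xi\mapsto\widehat f\pt{\xi}\,\overline{\widehat\phi\pt{a\xi}}$ is compactly supported and lies in $\L\cap\Ll$, so each slice belongs to $\Ll$, Tonelli gives $\iint\abs{\widehat f\pt{\xi}}^2\,\abs{\widehat\phi\pt{a\xi}}^2\,\frac{da}{\abs a}\,d\xi=c_\phi\norm{\widehat f}_{\Ll}^2<\infty$, and Fubini and Parseval apply verbatim. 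Taking $f=f'$ in this subspace exhibits $\Wa{\phi}{\cdot}$ as a fixed multiple of an isometry from $\Ll$ into $L^2\pt{\Rr\times\Rr,\,db\,da/a^2}$; it therefore extends boundedly to all of $\Ll$, it agrees with the original integral definition for every $f\in\Ll$ (the analysing functions $t\mapsto\abs a^{-1/2}\phi\pt{a^{-1}\pt{t-b}}$ lie in $\Ll$, so $\Wap{\phi}{f_n}\ba\to\Wap{\phi}{f}\ba$ pointwise along any $L^2$-approximating sequence), and the sesquilinear identity passes to all of $\Ll$ by continuity of both sides. I expect this density/continuity argument, rather than the three-line Fourier computation, to be where the real work lies.
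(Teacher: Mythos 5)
The paper itself gives no proof of Theorem \ref{ResForWav}: it is recalled as a classical fact (the Calder\'on/Daubechies reproducing formula), cited from the wavelet literature immediately before the statement and used only to exhibit the analogy with Theorem \ref{ResForStock}. Judged on its own, your argument is the standard one and is essentially correct: writing the slice $b\mapsto\Wap{\phi}{f}\ba$ as $f*\psi_a$ with $\psi_a(t)=\abs a^{-1/2}\overline{\phi(-t/a)}$, applying Plancherel in $b$, integrating in $a$ with Fubini--Tonelli, and using the scale invariance of $da/\abs a$ to pull out the admissibility integral. Your concern about rigor is also placed exactly where it belongs: for general $f\in\Ll$ the slices need not lie in $\Ll$, and first proving the identity on the dense class $\ptg{f:\widehat f\in C_c\pt{\Rr\setminus\ptg 0}}$, where Tonelli applies because $c_\phi<\infty$, and then extending by the isometry property and identifying the extension with the integral definition via pointwise convergence, is a sound and complete scheme.

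One caveat concerns the constant you dismiss as ``collecting the normalisation constants.'' With the paper's stated convention $\widehat f\pt\xi=(2\pi)^{-1/2}\int e^{-2\pi i t\xi}f(t)\,dt$, Plancherel is not unitary (one has $(\widehat f,\widehat g)_{\Ll}=(2\pi)^{-1}(f,g)_{\Ll}$), and carrying your $c_1$ through to the end yields $2\pi\,c_\phi\,(f,f')_{\Ll}$ on the right-hand side rather than $c_\phi\,(f,f')_{\Ll}$; the identity holds verbatim only if the Fourier transform is taken unitary (kernel $e^{-2\pi i t\xi}$ with no prefactor), or if the admissibility constant absorbs the factor $2\pi$ as in Daubechies. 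This mismatch is inherited from the paper's own hybrid normalisation (the same issue affects Theorem \ref{ResForStock}), so it does not indicate a structural flaw in your proof, but a complete write-up should track $c_1$ explicitly and state under which convention it disappears rather than asserting that it does.
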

\noindent Notice the similarities between Theorem \ref{ResForStock} and Theorem \ref{ResForWav}. This follows from a deep connection
among Stockwell transform, short-time Fourier transform and wavelet transform. In fact, these transforms 
are related to the affine Weyl-Heisenberg group studied in \cite{kalisa1993n}. This connection has been highlighted in the multi-dimensional case by the second author in \cite{RI14}.  
In \cite{GLM06,VE08}, the connections between Stockwell transform and wavelet transform are pointed out.
The affine Weyl-Heisenberg group is also connected to the definition of $\alpha$-modulation spaces, see
 \cite{BA14,DA08,FO07}, which represents, at the level of coorbit theory, a sort of interpolation between
 Modulation spaces and Besov spaces. 
 A different group approach to the Stockwell transform has been studied in \cite{boggiatto2009group}.

\section{A Time-Frequency Localized Basis}
\label{sec:propbase}

In this section, we prove that the system of functions \eqref{dostbasis}, proposed by R. G. Stockwell  in \cite{ST07}, 
is indeed an orthonormal basis of $\Lloi$.

\begin{figure}[t]
\centering
\includegraphics[width=.8\textwidth]{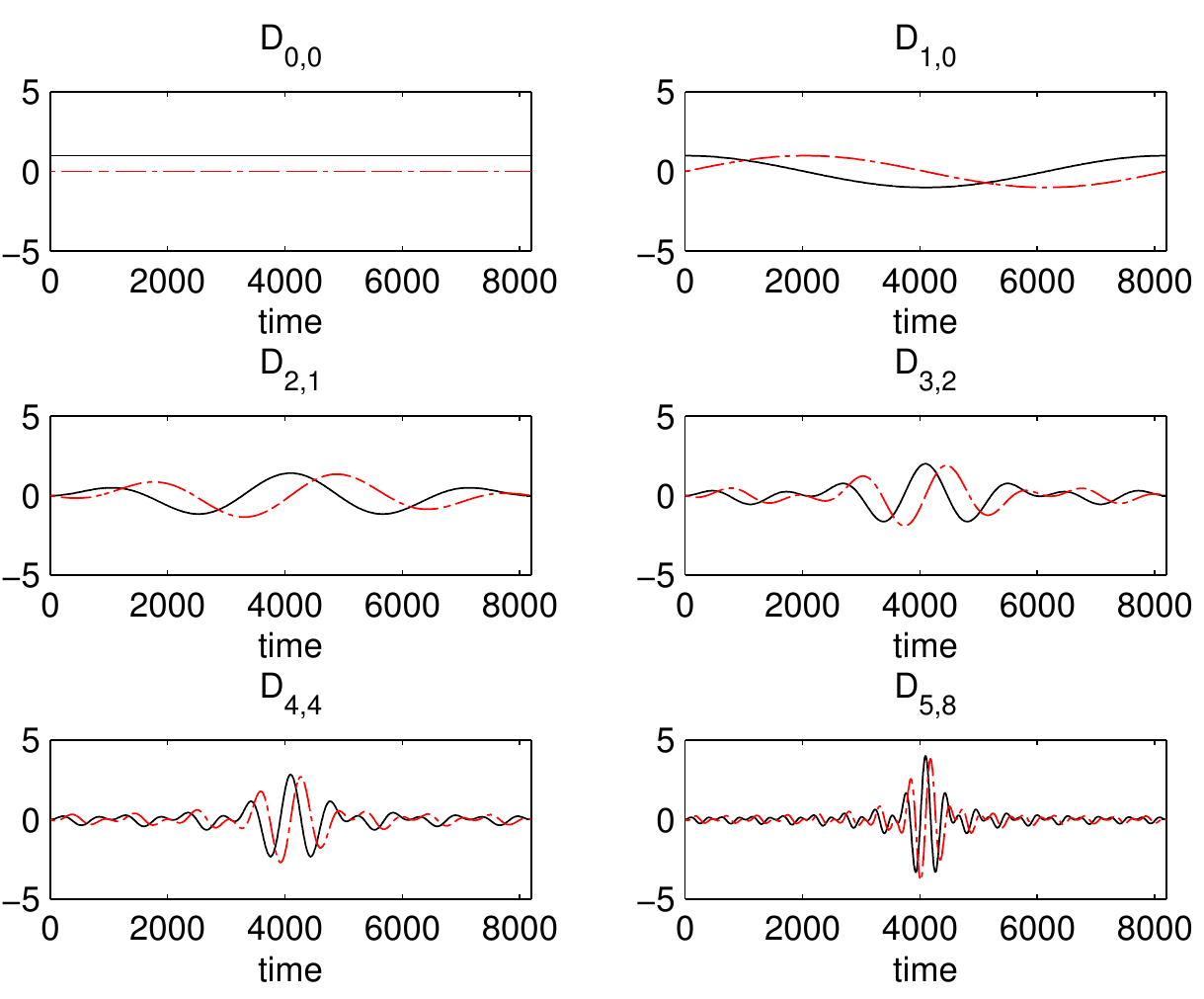}
\caption{DOST basis functions in increasing frequency $p$-bands. Black line = real, red line = imaginary. 
See Figure 2 in \cite{ST07} for a comparison.}
\label{fig:01}
\end{figure}

For $p=0$, we define
\begin{align*}
 \nu(0)=0,\quad \beta(0)=1, \quad \tau(0)=0,
\end{align*}
for $p=1$
\begin{align*}
 \nu(1)=1,\quad \beta(1)=1, \quad \tau(1)=0,
\end{align*}
for all $p \geq 2$
\begin{align*}
 \nu(p)= 2^{p-1}+ 2^{p-2}, \quad \beta(p)= 2^{p-1},\quad \tau(p)= 0, \ldots, \beta(p)-1.
\end{align*}
Setting, for each $p$, the $p$-frequency band  
\begin{align*}
\ptq{\beta(p), 2 \beta(p)-1}= \left[\nu(p)- \frac{\beta(p)}{2},\nu(p)+ \frac{\beta(p)}{2}-1\right],
\end{align*}
we obtain a partition of $\N$; notice that $\nu(p)$ is the center of each $p$-frequency  band.
We recall here the definition of the so-called DOST functions, introduced in \cite{ST07}:
\begin{align*}
D_0\pt{t}& =1,\qquad t\in\Rr, \\ 
D_1 \pt{t}&= e^{ \pii  t },\qquad t\in\Rr,
\end{align*}
and
\begin{align*}
D_p  & =\ptg{\dnbt (t)}_{\tau=0, \ldots, \beta(p)-1},\qquad t\in\Rr,
\end{align*}
where
\begin{align*}
\dnbt (t) & =\frac{1}{\sqrt{\beta(p)}}\sum_{f= \nu(p)-\beta(p)/2}^{\nu(p)+\beta(p)/2-1} e^{\pii f t} e^{- \pii f \tau/\beta(p)},\qquad t\in\Rr.
\end{align*}
For all negative integers $p$, we set 
\begin{align*}
\dnbt (t)= \overline{\dnbtneg(t)},\qquad \tau=0,\ldots, \beta\pt{\abs{p}}-1.
\end{align*}
For each $p \in \N$, $\nu(-p)= - \nu(p)$ and $\beta(-p)=\beta(p)$.
In the sequel we  call 
\begin{align}
\label{eq:newbasis}
 \bigcup_{p \in \Z} D_p
\end{align}
Stockwell basis.

Notice that, in the original paper \cite{ST07}, each $\dnbt $ had a multiplicative factor $e^{ \tau \pi \mathrm{i} }$. 
Since this factor is not crucial in  proving that \eqref{eq:newbasis} is a basis of $\Lloi$, we have decided to
drop it. In \eqref{eq:stcoeff}, we clarify the role of this multiplicative factor. 
In Figure \ref{fig:01} and  Figure \ref{fig:02} we draw the DOST basis functions 
without this multiplicative factor. In Figure \ref{fig:02} notice that, with our choice, 
these functions are self-similar in each $p$-band, in contrast to the ones defined in \cite{ST07}.
Moreover, we have slightly changed the notation in the frequency domain.
The $k$th element of the Fourier basis is $e^{\pii  k t }$, 
while, in the original paper, the $k$th element is $e^{-\pii k t}$.
The convention we adopt seems closer to the standard Fourier analysis.

\begin{figure}[t]
\centering
\includegraphics[width=.8\textwidth]{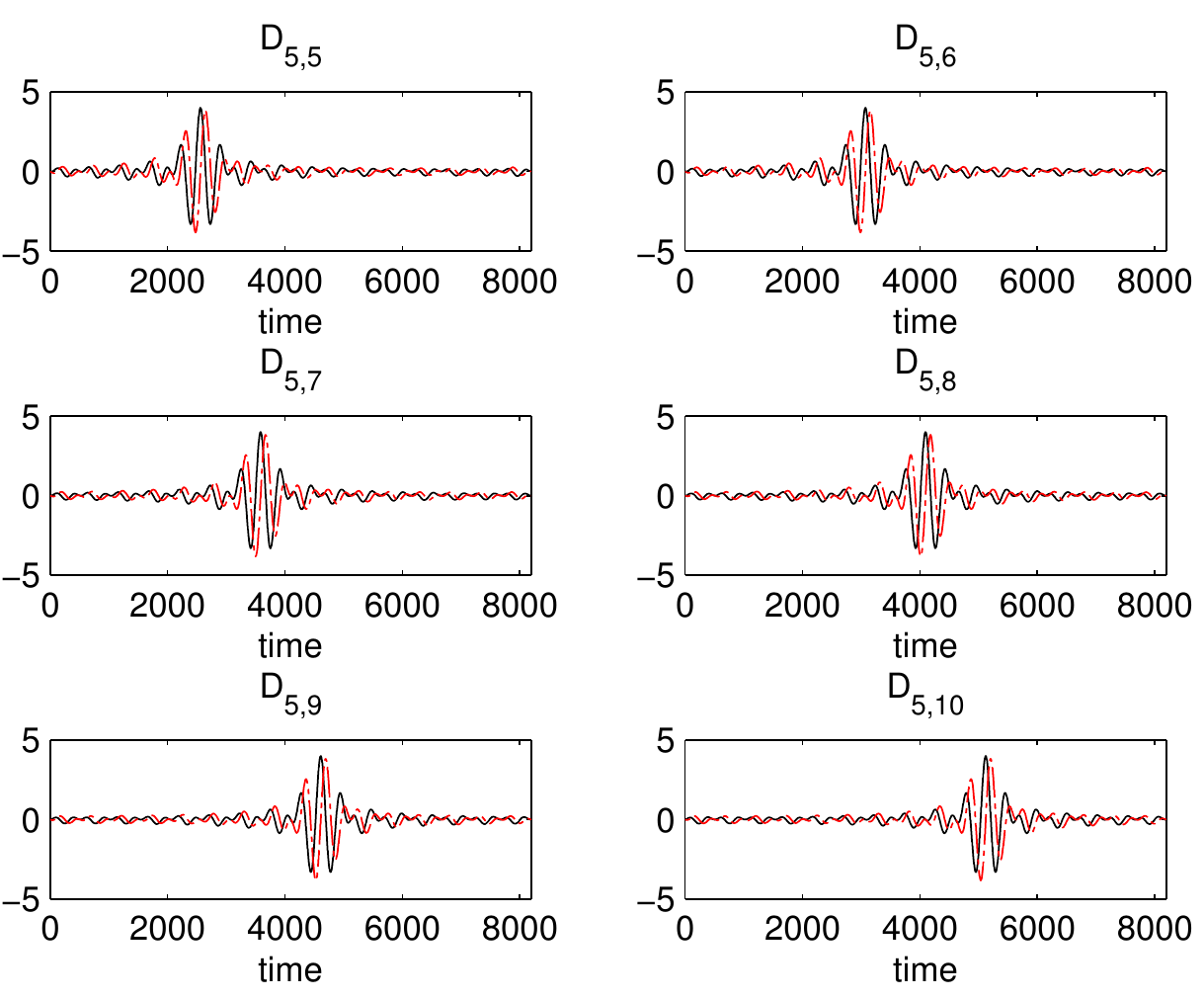}
\caption{DOST basis functions in the same $p$-band ($p=5$). Black line = real, red line = imaginary. 
See Figure 1 in \cite{ST07} for a comparison. }\label{fig:02}
\end{figure}

\begin{thm}
\label{thm:basidost}
$\,\bigcup_{p \in \Z} D_p$ is an orthonormal basis of $\Lloi$.
\end{thm}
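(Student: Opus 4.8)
The plan is to show that the Stockwell basis $\bigcup_{p\in\Z}D_p$ is both an orthonormal system and complete in $\Lloi$, by relating it directly to the standard Fourier basis $\{e^{\pii k t}\}_{k\in\Z}$, which is already known to be an orthonormal basis of $\Lloi$. The key observation is that each $\dnbt$ is, by its very definition, a finite linear combination of the Fourier exponentials $e^{\pii f t}$ with $f$ ranging over the $p$-frequency band $[\nu(p)-\beta(p)/2,\ \nu(p)+\beta(p)/2-1]$, and that these bands (together with their negatives and the singletons $\{0\},\{1\},\{-1\}$) partition $\Z$. So the change of basis is block-diagonal: within the $p$-th block it is given by the $\beta(p)\times\beta(p)$ matrix with entries $\tfrac{1}{\sqrt{\beta(p)}}e^{-\pii f\tau/\beta(p)}$, which is (a rescaling of) a finite discrete Fourier transform matrix and hence unitary.

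The steps I would carry out, in order. First, fix $p\geq 2$ and compute $(\dnbt,D_{p,\tau'})_{\Lloi}$ directly: expanding both as sums over the frequency band and using orthonormality of $\{e^{\pii f t}\}$ on $[0,1]$, the double sum collapses to $\tfrac{1}{\beta(p)}\sum_{f}e^{\pii f(\tau'-\tau)/\beta(p)}$, a geometric sum over a full period which equals $\delta_{\tau,\tau'}$. Second, note that for $p\neq q$ the bands are disjoint, so $(\dnbt, D_{q,\sigma})=0$ trivially since they live in orthogonal Fourier subspaces; the cases $p\in\{0,\pm1\}$ are immediate, and the negative-$p$ functions are conjugates supported on the reflected bands, so nothing new happens there. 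This establishes orthonormality. Third, for completeness, it suffices to show that each Fourier exponential $e^{\pii k t}$ lies in the closed span of the Stockwell basis; given $k$, let $p$ be the unique index with $k$ in the $p$-th band, and invert the $\beta(p)\times\beta(p)$ DFT-type matrix on that block to write $e^{\pii k t}$ as an explicit finite combination of $\{D_{p,\tau}\}_{\tau=0}^{\beta(p)-1}$. Since $\{e^{\pii k t}\}_{k\in\Z}$ spans a dense subspace of $\Lloi$, so does the Stockwell system; an orthonormal system whose span is dense is an orthonormal basis.

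Concretely, for completeness one checks that $\tfrac{1}{\sqrt{\beta(p)}}\sum_{\tau=0}^{\beta(p)-1}e^{\pii(k-\nu(p)+\beta(p)/2)\tau/\beta(p)}\,\dnbt(t)$ equals $e^{\pii k t}$ by the same geometric-sum identity run in the other direction. This simultaneously gives the inverse change of basis and confirms that the block matrix is unitary, so one could even package both orthonormality and completeness into the single statement that within each block the transform matrix is unitary and the blocks exhaust $\Z$. I would probably present it that way for brevity.

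I do not expect any genuine obstacle here: the whole proof is bookkeeping with geometric sums and the partition of $\Z$ into frequency bands. The one point requiring a little care is the normalization and the range of summation (that $\sum_{f=\nu(p)-\beta(p)/2}^{\nu(p)+\beta(p)/2-1}$ really does run over $\beta(p)$ consecutive integers forming a complete residue system mod $\beta(p)$, so the geometric sums behave as claimed), together with keeping track of the $p=0,1$ and negative-$p$ special cases so that the partition of $\Z$ is exactly accounted for with no exponential omitted or double-counted. Everything else is routine.
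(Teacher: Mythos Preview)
Your proposal is correct and follows essentially the same overall route as the paper: both arguments reduce the question to the Fourier basis by observing that the frequency bands partition $\Z$, both handle $p\neq p'$ by disjointness of bands, and both compute the same geometric sum
\[
(\dnbt,D_{p,\tau'})_{\Lloi}=\frac{1}{\beta(p)}\sum_{f}e^{\pii f(\tau'-\tau)/\beta(p)}
\]
for orthonormality within a band (the paper isolates this as a separate Lemma on sums of roots of unity).

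The only noteworthy difference is in the completeness step. The paper argues indirectly: it shows the $\beta(p)$ functions $\{\dnbt\}_\tau$ are linearly independent by writing the relation $\sum_\tau c_\tau\dnbt=0$ as a Vandermonde system with distinct nodes $e^{-\pii l/\beta(p)}$, hence $c_\tau=0$. You instead recognize that the $\beta(p)\times\beta(p)$ change-of-basis matrix is a (shifted) finite DFT matrix, and simply invert it to recover each $e^{\pii k t}$ explicitly. Your route is marginally more direct and has the advantage of yielding orthonormality and completeness simultaneously from unitarity of the block; the paper's Vandermonde argument is perhaps more self-contained for readers less familiar with the DFT. Either way the content is the same, and your remark that the $\beta(p)$ consecutive integers in the band form a full residue system mod $\beta(p)$ is exactly the point that makes both arguments go through.
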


\begin{proof}
In the sequel, we  consider positive $p$. For negative $p$, all results hold true using the adjoint property.
We recall that $\{e^{\pii k t}\}_{k \in \Z}$ is an orthonormal basis of $\Lloi$ and we notice that $\dnbt(t)$
is a finite linear combination of $e^{\pii k t}$ with $k$ in the $p$-frequency band 
\begin{align*}
I_p =\ptq{\nu(p)-\frac{\beta(p)}{2},\nu(p)+\frac{\beta(p)}{2}-1}.
\end{align*}
Hence, we can conclude that 
\begin{align*}
\pt{\dnbt,D_{p', \tau'}}_{\Lloi} =0, \quad \mbox{if }p\neq p',\quad \forall \tau, \tau',
\end{align*}
since the $p$-band and the $p'$-band are disjoint. So, we can focus on the case $p=p'$.
The proof is divided into three steps.

\medskip
\begin{description}
\item[Step I - ] $\norm{\dnbt }_{\Lloi}=1$.
\end{description}
Consider the inner product 
\begin{multline*}
\norm{\dnbt }^2_{\Lloi} = \pt{\dnbt,\dnbt}_{\Lloi}\\ 
=\frac{1}{\beta(p)}\int_{0}^1 
\left( \sum_{f= \nu(p)-\beta(p)/2}^{\nu(p)+\beta(p)/2-1} e^{\pii f t} e^{-\pii f \tau/\beta(p)}\right) 
\left( \sum_{f'= \nu(p)-\beta(p)/2}^{\nu(p)+\beta(p)/2-1} e^{-\pii f' t} e^{\pii  f' \tau/\beta(p)}\right)dt.
\end{multline*}

Since $\{e^{\pii k t}\}_{k \in \Z}$ is an orthonormal basis, 
\begin{align*}
 \norm{ \dnbt}_{\Lloi}^2= \frac{1}{\beta(p)}\sum_{f= \nu(p)-\beta(p)/2}^{f= \nu(p)+\beta(p)/2-1}\int_0^1 1\; dt= 1. 
\end{align*}

\medskip
\begin{description}
\item[Step II - ]  $\bigcup_{p \in \Z} D_p$ is an orthonormal set.
\end{description}

If $p\neq p'$ the $L^2$-scalar product vanishes, so we can suppose $p=p'$.
It is convenient to consider $j= f -\beta(p)$.
\begin{align}
\nonumber \dnbt (t) =& \frac{1}{\sqrt{\beta(p)}}\sum_{j=0}^{\beta(p)-1} e^{ \pii (\beta(p)+j)t} e^{-\pii (\beta(p)+j) \tau/\beta(b)}\\
 \label{eq:Dpriscritta}=&\frac{1}{\sqrt{\beta(p)}}\sum_{j=0}^{\beta(p)-1} e^{ \pii (\beta(p)+j)t}  e^{-\pii \tau j/\beta(p)}.
\end{align}
The orthonormality of the Fourier basis implies
\begin{equation}
 \label{eq:proddn}
 \pt{\dnbt, D_{p, \tau'} }_{\Lloi}= \frac{1}{\beta(p)} \sum_{j=0}^{\beta(p)-1} e^{\pii (\tau'- \tau) j/\beta(p)}.
 \end{equation}
 Now, we need the following lemma.
 \begin{lem}
 \label{lem:rad1}
  Let $k \in \N\setminus \{0\}$. Then
  \begin{align}
  \label{eq:somrad}
   \sum_{j=0}^{2^k-1} e^{\pii j m /2^k}=0, \quad m=\pm 1, \ldots, \pm (2^k-1).
  \end{align}
 \end{lem}
 \begin{proof}
  Notice that \eqref{eq:somrad} is a truncated geometric series with ratio
  $e^{\pii m/2^k}$. Therefore,  the well known formula for geometric progression implies that
  \[
    \sum_{j=0}^{2^k-1} e^{\pii j m /2^k}= \frac{1- e^{\pii m 2^k/2^k}}{1- e^{\pii m/2^k}}= 0.
  \]
  Since $m=\pm1, \ldots, \pm(2^{k}-1)$, the denominator in the above equation never vanishes.
 \end{proof}

Let $(\tau'- \tau)= m$ in \eqref{eq:somrad}, then Lemma \ref{lem:rad1} implies that
\begin{align*}
 \pt{\dnbt,D_{p', \tau'} }_{\Lloi}=\delta_{0}(p-p')\delta_{0}(\tau-\tau'),
\end{align*}
i.e. $\bigcup_{p \in \Z} D_p$ is an orthonormal set.

\medskip
\begin{description}
\item [Step III - ] $\bigcup_{p \in \Z}D_p$ is a basis of $\Lloi$.
\end{description}
Notice that
\begin{align*}
 D_p \subseteq \Span\{ e^ { \pii k t} \}_{k \in [ \beta(p), 2\beta(p)-1]}.
\end{align*}
Hence, to prove the assertion it is sufficient to show that the elements of the set $\ptg{\dnbt}_{\tau=0, \ldots, \beta(p)-1} $
are a basis of $ \Span \{ e^ { \pii k t} \}_{k \in [ \beta(p), 2\beta(p)-1]}$. Since we deal with finite dimensional vector spaces,
we prove that  the functions $\ptg{\dnbt}_{\tau=0}^{\beta(p)-1}$ are linearly independent; that is
\begin{align}
 \label{eq:a}
  \sum_{\tau=0}^{\beta(p)-1} c_\tau \dnbt =0 \Longrightarrow c_\tau=0, \quad \forall \tau=0, \ldots, \beta(p)-1.
\end{align}
Since $\{ e^ { \pii (\beta(p)+j) t} \}_{j=0, \ldots,  \beta(p)-1}$ is a basis, we can consider the projection of \eqref{eq:a} on each term
$\{e^{\pii (\beta(p)+j) t}\}_{j=0, \ldots, \beta(p)-1}$ of
the Fourier basis. We obtain the system
\begin{align}
\label{eq:base}
 \sum_{\tau=0}^{\beta(p)-1} c_\tau e^{-\pii \tau j/ \beta(p)}=0, \quad j=0, \ldots, \beta(p)-1.
\end{align}
Notice that \eqref{eq:base} can be written as the linear system
\begin{align}
 \label{eq:vander}
  \left(\begin{array}{cccc}
    1 &  1  &\ldots  &1 \\
    1 & e^{- \pii/\beta(p)} & \ldots& e^{-\pii \pt{\beta(p)-1}/\beta(p)}\\
    1 & e^{- \pii 2 /\beta(p)} & \ldots& e^{-\pii 2 \pt{\beta(p)-1}/\beta(p)}\\
    \vdots &\vdots & \ddots&\vdots \\
    1 & e^{-\pii \pt{\beta(p)-1}/\beta(p) }&\ldots& e^{-\pii \pt{\beta(p)-1}^2/\beta(p) }
  \end{array}\right)\cdot
  \left(\begin{array}{c}
   c_0\\
   c_1\\
   c_2\\
   \vdots\\
   c_{\beta(p)-1}
  \end{array}\right)
   =
   \left(\begin{array}{c}
   0\\
   0\\
   0\\
   \vdots\\
   0
  \end{array}\right)
\end{align}
The square matrix in \eqref{eq:vander} is a Vandermonde matrix 
with entries $\ptg{e^{-\pii l/\beta(p)}}_{l=0}^{\beta(p)-1}$.
Since the entries are all distinct the determinant of the Vandermonde matrix is non 
zero and the unique solution of the linear system \eqref{eq:vander} is the zero vector. That
is the functions $\ptg{\dnbt}_{\tau=0}^{\beta(p)-1}$ are linear independent.
\end{proof}

Lemma \ref{lem:rad1} implies the following corollary.
\begin{cor}
\label{pro:nul}
 For each $p\in \Z$ and each $\tau, \tau'= 0, \ldots, \beta(|p|)-1$ we have
 \begin{align*}
  \dnbt \pt{\frac{\tau'}{\beta(p)}}= \sqrt{\beta(p)} \delta_0(\tau'-\tau).
 \end{align*}
\end{cor}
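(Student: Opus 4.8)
The statement follows by a direct evaluation of the reindexed expression \eqref{eq:Dpriscritta} combined with Lemma \ref{lem:rad1}, so I will only indicate the steps. First I would dispose of the degenerate and the negative cases. For $p\in\{0,1\}$ one has $\beta(|p|)=1$, so $\tau=\tau'=0$ is the only admissible choice, and $D_0\pt{0}=1=\sqrt{\beta(0)}$, $D_1\pt{0}=e^{0}=1=\sqrt{\beta(1)}$. For $p<0$ the defining relation $\dnbt=\overline{D_{-p,\tau}}$ together with $\beta(-p)=\beta(p)$ reduces the claim to the case $-p>0$, because the right-hand side $\sqrt{\beta(|p|)}\,\delta_0(\tau'-\tau)$ is real, hence equal to its own conjugate. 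Thus it suffices to treat $p\ge 2$, where $\beta(p)=2^{p-1}$.

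Next I would evaluate the reindexed form \eqref{eq:Dpriscritta},
\[
\dnbt(t)=\frac{1}{\sqrt{\beta(p)}}\sum_{j=0}^{\beta(p)-1} e^{\pii(\beta(p)+j)t}\,e^{-\pii \tau j/\beta(p)},
\]
at the point $t=\tau'/\beta(p)$. Since $\tau'\in\N$ we have $e^{\pii(\beta(p)+j)\tau'/\beta(p)}=e^{\pii\tau'}\,e^{\pii j\tau'/\beta(p)}=e^{\pii j\tau'/\beta(p)}$, the factor $e^{\pii\tau'}=e^{2\pi\mathrm{i}\tau'}$ being equal to $1$. Hence
\[
\dnbt\pt{\frac{\tau'}{\beta(p)}}=\frac{1}{\sqrt{\beta(p)}}\sum_{j=0}^{\beta(p)-1} e^{\pii(\tau'-\tau)j/\beta(p)}.
\]

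Finally I would invoke Lemma \ref{lem:rad1}. If $\tau=\tau'$ the sum equals $\beta(p)$, so the expression above is $\sqrt{\beta(p)}$. If $\tau\neq\tau'$, then $m:=\tau'-\tau$ is an integer with $|m|\in\{1,\dots,\beta(p)-1\}$, and since $\beta(p)=2^{p-1}$ is a power of two, formula \eqref{eq:somrad} of Lemma \ref{lem:rad1} (taken with $2^k=\beta(p)$) gives $\sum_{j=0}^{\beta(p)-1} e^{\pii m j/\beta(p)}=0$. In both cases we obtain $\dnbt(\tau'/\beta(p))=\sqrt{\beta(p)}\,\delta_0(\tau'-\tau)$, which is the assertion. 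There is no genuine obstacle here: the only points deserving a word of care are checking that the index $m$ lies in the admissible range of Lemma \ref{lem:rad1} and that $\beta(p)$ is a power of two — which is precisely why the octave partition of $\N$ was chosen.
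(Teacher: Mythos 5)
Your proof is correct and follows essentially the same route as the paper: evaluate the reindexed form \eqref{eq:Dpriscritta} at $t=\tau'/\beta(p)$, observe the sum collapses to $\frac{1}{\sqrt{\beta(p)}}\sum_{j} e^{\pii j(\tau'-\tau)/\beta(p)}$, and apply Lemma \ref{lem:rad1} with $m=\tau'-\tau$, treating negative $p$ by conjugation (the paper's ``adjoint property''). Your explicit handling of the degenerate cases $p\in\{0,1\}$ and of the range of $m$ is a slight refinement of the paper's terser argument, but not a different method.
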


\begin{proof}
 Let us suppose $p$ positive. Then 
 \begin{align*}
    \dnbt \left(\frac{\tau'}{\beta(p)}\right) &= \frac{1}{\sqrt{\beta(p)}}\sum_{j=0}^{\beta(p)-1} 
    e^{\pii (\beta(p)+j) \left(\frac{\tau'-\tau}{\beta(p)}\right)}\\
    & =\frac{1}{\sqrt{\beta(p)}}\sum_{j=0}^{\beta(p)-1} 
    e^{\pii j (\tau'-\tau)/\beta(p)}.
  \end{align*}
Letting $(\tau'-\tau)=m$, we apply Lemma \ref{lem:rad1} and we obtain the assertion. For negative $p$ we use the adjoint property.
\end{proof}

The DOST functions are not dilations nor translations of a single function. 
Nevertheless, for each $p$,
\begin{align*}
 D_p\pt{t}= \left\{ \frac{1}{\sqrt{\beta(p)}} \sum_{j=0}^{\beta(p)-1} e^{\pii (\beta(p)+j) (t-\tau/\beta(p))}\right\}_{\tau=0, \ldots, \beta(p)-1}
\end{align*}
is formed by translations of $\tau/\beta(p)$ of the same function. Roughly speaking, we can state that the DOST basis is not self similar globally, but it is self
similar in each band, see Figure \ref{fig:02}. 
Hence, the $\ststst$-transform in this setting appears different from the wavelet transform because the mother wavelet changes 
as the frequencies increases, in contrast to the usual formulation.


R. G. Stockwell proposed this basis because it is an efficient compromise between frequency localization in low 
frequencies and time localization for high frequencies.
The price to pay is that, on one hand, for high frequencies, we do not have a precise frequency localization, 
but just a localization in a certain band, which is wider as the frequency increases and, on the other hand, in low frequencies, 
we lose
time localization. 
In fact, for high frequencies, the basis $\dnbt$ are,
in large sense, local at $t=\tau/\beta(p)$.
It is not true that $\dnbt$ has compact support in time,
but the energy is concentrated near the point
$t=\tau/\beta(p)$. We prove that basis $\dnbt $ are $0.85$-concentrated in the neighborhood
\begin{align*}
I_{p, \tau}=
\ptq{\frac{\tau}{\beta(p)}-\frac{1}{2 \beta(p)}, \frac{\tau}{\beta(p)}+\frac{1}{2 \beta(p)}},
\end{align*}
in the sense of the Donoho-Stark Theorem \cite{DS89, CA14}.  
\begin{pipi}
\label{prop:loc}
 For each $\dnbt(t)$ we have 
 \begin{align*}
 \norm{\dnbt}_{L^2\pt{I_{p,\tau }}}=
 \left(\int_{\frac{2\tau-1}{2 \beta(p)}} ^{\frac{2\tau+1}{2 \beta(p)}} 
 |\dnbt|^2 dt\right)^{1/2}> 0,85,
 \end{align*}
$i.e.$ the $L^2$-norm is concentrated in the interval 
\begin{align*}
I_{p, \tau} &= \left[\frac{\tau}{\beta(p)}-\frac{1}{2 \beta(p)}, \frac{\tau}{\beta(p)}+\frac{1}{2 \beta(p)}\right].
\end{align*}
Since $\norm{\dnbt}=1$, we can also state that the $L^2$-norm of $\dnbt$ is less that $0,15$ out 
of $I_{p, \tau}$.
For $\tau=0$, $I_{p, 0}$ must be considered as an interval in circle, that is 
\begin{align*}
 I_{p,0}= 
 \left[ 0, \frac{1}{2 \beta(p)}\right) \cup \left(1-\frac{ 1}{2\beta(p)}, 1 \right].
\end{align*}
\end{pipi}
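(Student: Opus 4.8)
The plan is to reduce everything to an explicit computation of $\dnbt$ on the interval $I_{p,\tau}$, exploiting the fact that, up to a unimodular factor and a translation, $\dnbt$ is a Dirichlet-type kernel. Writing $\beta=\beta(p)$ and using the rewriting \eqref{eq:Dpriscritta},
\begin{align*}
\dnbt(t)=\frac{1}{\sqrt{\beta}}\sum_{j=0}^{\beta-1}e^{\pii(\beta+j)(t-\tau/\beta)}
=\frac{e^{\pii\beta(t-\tau/\beta)}}{\sqrt{\beta}}\cdot\frac{1-e^{\pii\beta(t-\tau/\beta)}}{1-e^{\pii(t-\tau/\beta)}},
\end{align*}
so that, setting $u=t-\tau/\beta$,
\begin{align*}
\abs{\dnbt(t)}^2=\frac{1}{\beta}\cdot\frac{\sin^2(\pi\beta u)}{\sin^2(\pi u)}.
\end{align*}
Thus the claim is translation-invariant in $\tau$: it suffices to prove
\begin{align*}
\frac{1}{\beta}\int_{-1/(2\beta)}^{1/(2\beta)}\frac{\sin^2(\pi\beta u)}{\sin^2(\pi u)}\,du>(0{,}85)^2,
\end{align*}
which for $\tau=0$ is exactly the stated interval-on-the-circle $I_{p,0}$ since $u$ ranges over a symmetric neighbourhood of $0$ modulo $1$.

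The second step is to handle the $u$-integral. After the substitution $v=\beta u$ the integral becomes $\int_{-1/2}^{1/2}\frac{\sin^2(\pi v)}{\beta^2\sin^2(\pi v/\beta)}\,dv$, and the integrand increases monotonically in $\beta$ for each fixed $v\in(-1/2,1/2)$ (because $x\mapsto x/\sin x$ is increasing on $(0,\pi/2)$, so $\beta\sin(\pi v/\beta)$ decreases in $\beta$). Hence the worst case is the smallest admissible $\beta$; since $D_0$ and $D_1$ are single exponentials of modulus $1$ (for which the statement is trivial, the norm on $I_{p,\tau}$ being $1/\sqrt{2\beta(p)}$ times... — more precisely $\beta(0)=\beta(1)=1$ gives $\int_{I}1\,dt=1$, so the bound holds trivially), the first genuine case is $p=2$, i.e. $\beta=2$. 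For $\beta=2$ one computes directly
\begin{align*}
\frac{1}{2}\int_{-1/4}^{1/4}\frac{\sin^2(2\pi u)}{\sin^2(\pi u)}\,du
=\frac{1}{2}\int_{-1/4}^{1/4}4\cos^2(\pi u)\,du
=2\Big[\tfrac{u}{2}+\tfrac{\sin(2\pi u)}{4\pi}\Big]_{-1/4}^{1/4}
=\frac12+\frac1\pi,
\end{align*}
which is $\approx 0{,}818>(0{,}85)^2\approx0{,}7225$. In the limit $\beta\to\infty$ the integral tends to $\int_{-1/2}^{1/2}\operatorname{sinc}^2(\pi v)\,dv$, which is larger still (about $0{,}903$), so every intermediate $\beta$ lies between these two values and the bound $0{,}85$ — note $\sqrt{0{,}818}\approx0{,}904>0{,}85$ — is comfortably satisfied.

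The one point that needs care, and which I expect to be the main obstacle, is making the monotonicity-in-$\beta$ argument rigorous: one must justify that $\beta\mapsto \beta^2\sin^2(\pi v/\beta)$ is decreasing for all $\beta\ge 2$ and all $v\in(-1/2,1/2)$, i.e. that $g(\beta)=\beta\sin(\pi v/\beta)$ decreases, equivalently $g'(\beta)=\sin(\pi v/\beta)-(\pi v/\beta)\cos(\pi v/\beta)\le 0$, which holds because $\tan\theta\ge\theta$ on $[0,\pi/2)$ and here $|\pi v/\beta|<\pi/2$. Once this is in place, the proof closes: the integral over $I_{p,\tau}$ of $|\dnbt|^2$ is at least its value at $\beta=2$, namely $\tfrac12+\tfrac1\pi>0{,}81>(0{,}85)^2$, so $\norm{\dnbt}_{L^2(I_{p,\tau})}>0{,}85$; since $\norm{\dnbt}_{\Lloi}=1$ by Theorem \ref{thm:basidost}, the complementary norm is below $\sqrt{1-0{,}81}<0{,}44$, and in particular below the claimed $0{,}15$ once one uses the sharper numerical value of the integral (which for $\beta\ge 2$ exceeds $0{,}97$ after a more careful estimate, or one simply states the weaker but true bound). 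The Donoho–Stark interpretation is then immediate from \cite{DS89}.
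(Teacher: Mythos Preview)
Your closed-form route via the Dirichlet kernel,
\[
|\dnbt(t)|^2=\frac{1}{\beta}\,\frac{\sin^2(\pi\beta u)}{\sin^2(\pi u)},\qquad u=t-\tau/\beta,
\]
is a clean alternative to the paper's argument, which instead expands $|\dnbto|^2$ as the Fej\'er-type sum $\frac{1}{\beta}\sum_{|m|<\beta}(\beta-|m|)e^{\pii mt}$, integrates term by term, and controls the error via the third-order Lagrange remainder of $\sin$ to obtain the uniform lower bound $1-\pi^2/36\approx 0{,}726$.

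However, your monotonicity step is reversed. With $\theta=\pi v/\beta$ and $g(\beta)=\beta\sin(\pi v/\beta)$ you have $g'(\beta)=\sin\theta-\theta\cos\theta$, and $\tan\theta\ge\theta$ on $[0,\pi/2)$ gives $g'(\beta)\ge 0$, not $\le 0$. Equivalently, as $\beta$ grows, $\pi v/\beta$ moves toward $0$ where $\operatorname{sinc}$ attains its maximum, so $\beta\sin(\pi v/\beta)$ \emph{increases} and the integrand $\sin^2(\pi v)\big/\!\big(\beta\sin(\pi v/\beta)\big)^2$ \emph{decreases} in $\beta$. A direct check confirms this: for $\beta=2$ you correctly get $\tfrac12+\tfrac1\pi\approx 0{,}818$, but for $\beta=4$ the paper's integrated formula gives $\approx 0{,}784$, and the $\beta\to\infty$ limit is
\[
\int_{-1/2}^{1/2}\frac{\sin^2(\pi v)}{(\pi v)^2}\,dv=\frac{2}{\pi}\operatorname{Si}(\pi)-\frac{4}{\pi^2}\approx 0{,}774,
\]
not $0{,}903$. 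Thus the worst case is $\beta\to\infty$, not $\beta=2$, and your claim that the integral eventually exceeds $0{,}97$ (and hence that the complementary norm drops below $0{,}15$) cannot hold.

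Your strategy is salvageable: since the sequence is decreasing in $\beta$ and the infimum $\frac{2}{\pi}\operatorname{Si}(\pi)-\frac{4}{\pi^2}$ still exceeds $(0{,}85)^2=0{,}7225$, the bound $\norm{\dnbt}_{L^2(I_{p,\tau})}>0{,}85$ follows once you supply a rigorous lower estimate for $\operatorname{Si}(\pi)$ (e.g.\ via the alternating-series bound for $\int_0^\pi\frac{\sin x}{x}\,dx$). But as written, the argument identifies the wrong extremal case and the numerical claims in the last paragraph do not hold.
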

\begin{proof}
 Since in each $p$-band the basis functions are a translation of $\tau/\beta(p)$ of the same function, 
 we can prove the property for a fixed $\tau$. For simplicity,
 we consider  $\tau=0$. In order to take in account just one integral, we extend by periodicity the function for negative $t$ and we evaluate
 \begin{align}
 \label{eq:lolbas}
  \int_{-\frac{1}{2 \beta(p)}}^{\frac{1}{2\beta(p)}} \abs{\dnbt\pt{t}}^2 \,dt.
 \end{align}
Notice that 
\begin{align}
 \label{eq:indutpro}
\nonumber \abs{\dnbto}^2=& \dnbto (t) \cdot \overline{\dnbto (t)}\\
\nonumber =&\frac{1}{\beta(p)}\pt{\sum_{j=0}^{\beta(p)-1} e^{ \pii (\beta(p)+j) t}}\cdot \pt{\sum_{k=0}^{\beta(p)-1} e^{-\pii (\beta(p)+k)t}}\\
=&\frac{1}{\beta(p)}\sum_{m=-\beta(p)+1}^{\beta(p)-1}(\beta(p)-|m|) e^{\pii m t}.
 \end{align}
Equation \eqref{eq:indutpro} can be proven by induction on the size of the band. Writing \eqref{eq:indutpro}
 in terms of cosine and sine we obtain
 \begin{align*}
  &\abs{\dnbto}^2\\
	= &\frac{1}{\beta(p)} \sum_{m=-\beta(p)+1}^{\beta(p)-1}(\beta(p)-|m|) 
  (\cos( 2 \pi m t) + \mathrm{i} \sin(2 \pi m t))\\
  = & 1+\frac{1}{\beta(p)} \sum_{m=1}^{\beta(p)-1}\pt{\beta(p)-m}
  \pt{ \pt{\cos(2 \pi m t)+ \cos(-2 \pi m t)} +\mathrm{i} \pt{\sin ( 2\pi mt)+ \sin (-2 \pi m t)} } \\
  = & 1+ \frac{2}{\beta(p)} \sum_{m=1}^{\beta(p)-1}(\beta(p)-m) \cos\pt{2 \pi m t}.
 \end{align*}
Therefore,
  \begin{align*}
    &\int_{-\frac{1}{2 \beta(p)}}^{\frac{1}{2\beta(p)}} \abs{\dnbto \pt{t}}^2 \,dt\\
    = &\int_{-\frac{1}{2 \beta(p)}}^{\frac{1}{2\beta(p)}} dt + \frac{2}{ \beta(p)}
    \int_{-\frac{1}{2 \beta(p)}}^{\frac{1}{2\beta(p)}} \sum_{m=1}^{\beta(p)-1}(\beta(p)-m) \cos(2 \pi m t)\, dt\\
    = & \frac{1}{\beta(p)}+ \frac{2}{\beta(p)} \sum_{m=1}^{\beta(p)-1} (\beta(p)-m) \frac{\sin(2 \pi m t)}{2 \pi m }\left|^{\frac{1}{2 \beta(p)}}_{- \frac{1}{2 \beta(p)}}\right.\\
    = & \frac{1}{\beta(p)}+ \frac{4}{\beta(p)} \sum_{m=1}^{\beta(p)-1} (\beta(p)-m) \frac{\sin\left( \frac{2 \pi m}{2\beta(p)} \right)}{2 \pi m }.
  \end{align*}
By the Maclaurin expansion of $\sin(x)$, 
  \begin{align*}
    \int_{-\frac{1}{2 \beta(p)}}^{\frac{1}{2\beta(p)}} \abs{\dnbto}^2 dt= & \frac{1}{\beta(p)}+ \frac{4}{\beta(p)} \sum_{m=1}^{\beta(p)-1} (\beta(p)-m)\left( \frac{2\pi m \frac{1}{2\beta(p)} +R_m(\eta)}{2 \pi m } \right),
  \end{align*}
where $R_m(\eta)$ is the Lagrange rest. Using Leibniz summation formula we obtain
 \begin{align*}
    \int_{-\frac{1}{2 \beta(p)}}^{\frac{1}{2\beta(p)}} \abs{\dnbto\pt{t}}^2 dt    \cong & \frac{1}{\beta(p)}+ \frac{2}{\beta^2(p)} \sum_{m=1}^{\beta(p)-1} (\beta(p)-m)\\
    = & \frac{1}{\beta(p)}+ \frac{2}{\beta^2(p)} \left(\beta(p) (\beta(p)-1)- \frac{1}{2} \beta(p) (\beta(p)-1) \right)\\
    = & \frac{1}{\beta(p)}+ \frac{1}{\beta(p)}  (\beta(p)-1)=1.\\
 \end{align*}     
We have to take in account the rests $R_m(\eta)$. Since 
\begin{align*}
\sup \abs{\frac{\mathrm{d}^3}{\mathrm{dt}^3}\ptq{\sin(2 \pi m t)}}= (2 \pi m)^3, 
\end{align*}
we can conclude that
\begin{align*}
 &\left|\frac{4}{\beta(p)} \sum_{m=1}^{\beta(p)-1}(\beta(p)-m) \frac{R_m(\eta)}{2 \pi m}\right|\\
 \leq & \frac{4}{\beta(p)} \sum_{m=1}^{\beta(p)-1} \frac{(\beta(p)-m)}{2 \pi m} \frac{(2 \pi m)^3}{6 (2 \beta(p))^3}\\
 \leq & \frac{\pi^2}{3 \beta^4(p)} \sum_{m=1}^{\beta(p)-1} (\beta(p)-m) m^2 \\
 \leq & \frac{\pi^2}{3 \beta^4(p)}\left(\frac{\beta(p)^2}{6} (\beta(p)-1) (2 \beta(p)-1)- 
 \frac{\beta(p)^2}{4}(\beta(p)-1)^2 \right)\\
 \leq &\frac{\pi^2 (\beta(p)-1)}{3 \beta^2(p)}\left(\frac{\beta(p)}{3} -\frac{1}{6}- \frac{\beta(p)}{4}+ \frac{1}{4} \right)\\
 \leq &\frac{\pi^2 }{36} \frac{\beta(p)^2-1}{\beta(p)^2}\\
 \leq &\frac{\pi^2}{36} < 0,275.
\end{align*}
Hence, finally
\begin{align*}
 \left(\int_{-\frac{1}{2 \beta(p)}}^{\frac{1}{2\beta(p)}} \abs{\dnbto\pt{t}}^2 \,dt\right)^{1/2}\geq (1-0, 275)^{1/2}\geq 
 \sqrt{0,725}> 0,85.
\end{align*}

\end{proof}

\section{Diagonalization of the \texorpdfstring{$\ststst$-transform}{S-transform} }
\label{sec:basestrock}
In this section, for the sake of clarity, we write $\ststst_\phi$-transform instead of $\ststst$-transform to emphasize the window dependence.  We focus our attention to $L^2\pt{[0,1]}$. 
Using Fourier series, it is well known that if $f \in L^2\pt{[0,1]}$, then 
\begin{align*}
 f(t)&= \sum_{k \in \Z} \hat{f}(k) e^{\pii k t}, \quad a.e., 
\end{align*}
and
\begin{align*}
 \norm{f}_{L^2\pt{[0,1]}}&= \pt{\sum_{k\in \Z} |\hat{f}(k)|^2 }^\frac{1}{2}.
\end{align*}

We define the Hilbert space 
$\pt{Y, \pt{\phantom{f} ,\phantom{f}  }_Y, \norm{\phantom{f}}_Y}$:
\begin{align*}
  Y&= \ptg{\sum_{k \in \Z} c_k(\xi) e^{\pii (k-\xi)b }\mid c_k(\xi) \in L^2\pt{\Rr, \frac{1}{|\xi|}}, \; \mbox{and }\;
  \sum_{k \in \Z} \norm{c_k}^2_{L^2\pt{\Rr, \frac{1}{|\xi|}}} <\infty},\\
  \quad (g, g')_Y&= \int_{0}^1 \int_{\Rr} g(b, \xi) \overline{g'(b, \xi)}  \frac{d\xi}{|\xi|} d b, \quad g, g' \in Y,\\
  \norm{g}_Y&=\sqrt{\pt{g,g}_Y} = \pt{\sum_{k \in \Z} \norm{g_k}_
  {L^2 \pt{\Rr, \frac{1}{|\xi|} }}^2  }^{\frac{1}{2}}, \quad g(b, \xi)= \sum_{k \in \Z} g_k(\xi) e^{\pii (k-\xi) b}, \quad a.e.\,.
\end{align*}
 In view of Theorem  \ref{ResForStock}, we introduce $\pt{Z, \pt{\phantom{f} ,\phantom{f}  }_Z, 
 \norm{\phantom{f}}_Z}$
 the Hilbert space of admissible windows:
 \begin{align}
 \label{eq:defZ}
   Z&=\ptg{\phi \in \Si'(\R) \mid \int |\widehat{\phi}\pt{\xi}|^2\frac{d\xi}{\abs{1+\xi}}<\infty},\\
  \nonumber
  \quad \pt{\phi, \phi'}_{Z}&=\int \widehat{\phi}\pt{\xi} \overline{\widehat{\phi'}}\pt{\xi} 
   \frac{d\xi}{\abs{1+\xi}}, \quad \phi, \phi'\in Z,\\
  \nonumber
   \norm{\phi}_Z&= \sqrt{\pt{\phi, \phi}_Z}= \pt{\int |
   \widehat{\phi}\pt{\xi}|^2\frac{d\xi}{\abs{1+\xi}}}^{1/2}.
 \end{align}

 \begin{thm}
 \label{thm:contl2}
 We define
 \begin{align*}
  \ststst: \Lloi \times \pt{Z\cap \Si\pt{\R}} &\longrightarrow Y\\
   \pt{f, \phi}=\pt{\sum_{k \in \Z} \widehat{f} \pt{k} e^{\pii k t}, \phi}& \longmapsto 
   \sum_{k \in \Z} \widehat{f}\pt{k} \pt{\ststst_\phi\pt{e^{\pii k \cdot} }} \pt{b, \xi}
 \end{align*}
 where, in view of Proposition \ref{prop:fourier}, we set
 \begin{align*}
\pt{  \ststst_\phi e^{\pii k \cdot} }\pt{b, \xi}=& e^{- \pii b \xi} \F^{-1}_{\zeta \mapsto b}\pt{ \overline{\widehat{\phi} \pt{\frac{\zeta - \xi}{\xi}}} \delta_k(\zeta)}(b).
 \end{align*}
 Then $\ststst: \Lloi \times \pt{Z\cap \Si\pt{\R}} \longrightarrow Y$ is continuous.
\end{thm}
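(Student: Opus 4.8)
The plan is to recognise $\ststst$ as a bounded sesquilinear map — linear in $f$, conjugate-linear in $\phi$ — so that joint continuity is equivalent to a single estimate. Concretely I would prove
\[
 \norm{\ststst\pt{f,\phi}}_Y \;\leq\; \norm{f}_{\Lloi}\,\norm{\phi}_Z ,\qquad \pt{f,\phi}\in\Lloi\times\pt{Z\cap\Si\pt{\R}},
\]
and then deduce continuity from the additivity identity $\ststst\pt{f,\phi}-\ststst\pt{f_0,\phi_0}=\ststst\pt{f-f_0,\phi}+\ststst\pt{f_0,\phi-\phi_0}$ together with this bound. Heuristically the estimate is the resolution of identity of Theorem \ref{ResForStock} evaluated at the pure frequencies $e^{\pii k\cdot}$; since these are not in $\L$ one cannot simply quote that theorem, and the computation below is meant to make the identity rigorous on $\Lloi$.

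First I would make the building block $\pt{\ststst_\phi e^{\pii k\cdot}}\pt{b,\xi}$ explicit: feeding the Dirac mass $\delta_k$ into the formula of the statement (equivalently, into Proposition \ref{prop:fourier}) collapses the inverse Fourier transform, and after the phases $e^{-\pii b\xi}$ and $e^{\pii kb}$ combine one gets
\[
 \pt{\ststst_\phi e^{\pii k\cdot}}\pt{b,\xi} \;=\; \overline{\widehat{\phi}\pt{\tfrac{k-\xi}{\xi}}}\;e^{\pii\pt{k-\xi}b}.
\]
Hence $\ststst\pt{f,\phi}\pt{b,\xi}=\sum_{k\in\Z}g_k\pt{\xi}\,e^{\pii\pt{k-\xi}b}$ with $g_k\pt{\xi}=\widehat{f}\pt{k}\,\overline{\widehat{\phi}\pt{\tfrac{k-\xi}{\xi}}}$, which is exactly the normal form appearing in the definition of $Y$; therefore, directly from that definition,
\[
 \norm{\ststst\pt{f,\phi}}_Y^2 \;=\; \sum_{k\in\Z}\norm{g_k}_{L^2\pt{\R,\frac{1}{\abs{\xi}}}}^2 \;=\; \sum_{k\in\Z}\abs{\widehat{f}\pt{k}}^2\int_\R\abs{\widehat{\phi}\pt{\tfrac{k-\xi}{\xi}}}^2\frac{d\xi}{\abs{\xi}}.
\]

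The key step is the inner integral. For $k\neq 0$ the substitution $u=\tfrac{k-\xi}{\xi}$, i.e. $\xi=\tfrac{k}{u+1}$ — carried out separately on $\xi>0$ and $\xi<0$ — sends $\tfrac{d\xi}{\abs{\xi}}$ to $\tfrac{du}{\abs{u+1}}$ and turns the integral into $\int_\R\abs{\widehat{\phi}\pt{u}}^2\tfrac{du}{\abs{u+1}}=c_\phi=\norm{\phi}_Z^2$, independently of $k$ (the factors $k$ cancel). The term $k=0$ is the one delicate point: there the integrand degenerates to the constant $\abs{\widehat{\phi}\pt{-1}}^2/\abs{\xi}$, which is not integrable — but $\phi\in\Si\pt{\R}$ makes $\widehat{\phi}$ continuous, and admissibility $\int\abs{\widehat{\phi}\pt{\xi}}^2\tfrac{d\xi}{\abs{1+\xi}}<\infty$ forces $\widehat{\phi}\pt{-1}=0$, so this term vanishes (which is also precisely what is needed for $g_0\in L^2\pt{\R,\frac{1}{\abs{\xi}}}$, hence for $\ststst\pt{f,\phi}$ to lie in $Y$ at all). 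Summing, $\norm{\ststst\pt{f,\phi}}_Y^2=\norm{\phi}_Z^2\sum_{k\neq0}\abs{\widehat{f}\pt{k}}^2\leq\norm{\phi}_Z^2\norm{f}_{\Lloi}^2$, which is the claimed bound.

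I expect the only genuine obstacles to be bookkeeping: justifying the distributional evaluation of $\pt{\ststst_\phi e^{\pii k\cdot}}$ and checking that the normalisation constants cancel (they do, with the convention in force), and handling the $k=0$ degeneracy, which is exactly where the hypotheses $\phi\in\Si\pt{\R}$ and admissibility enter. Everything else is the change of variables above and the elementary fact that a bounded sesquilinear map is jointly continuous for the norm topology of $\Lloi$ and the $Z$-norm.
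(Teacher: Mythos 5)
Your proof follows essentially the same route as the paper: compute $\pt{\ststst_\phi e^{\pii k\cdot}}\pt{b,\xi}=\overline{\widehat{\phi}\pt{\frac{k-\xi}{\xi}}}e^{\pii\pt{k-\xi}b}$ explicitly, reduce the weighted integral to $\norm{\phi}_Z^2$ via the substitution $u=\pt{k-\xi}/\xi$, and sum over $k$ using the orthogonality built into the $Y$-norm. Your handling of the $k=0$ term (where the substitution degenerates and one needs $\widehat{\phi}\pt{-1}=0$, guaranteed by continuity of $\widehat{\phi}$ plus admissibility) is in fact more careful than the paper's, which tacitly works with $k\neq 0$; this only turns the paper's norm equality into an inequality, which still gives the bilinear bound and hence the claimed joint continuity.
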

\begin{proof}
 We start considering $\ststst(e^{\pii k \cdot},\phi)=\pt{\ststst_\phi e^{\pii k \cdot}}$. 
 By definition, 
 \begin{align*}
\pt{  \ststst_\phi e^{\pii k \cdot} }\pt{b, \xi}=& e^{- \pii b \xi} \F^{-1}_{\zeta \mapsto b}
\pt{ \overline{\widehat{\phi} \pt{\frac{\zeta - \xi}{\xi}}} \delta_k(\zeta)}(b)\\
    =& e^{- \pii b\xi } \F^{-1}_{\zeta \mapsto b}\pt{ \overline{\widehat{\phi} \pt{\frac{k - \xi}{\xi}}} \delta_k(\zeta)}(b)\\
    =&e^{- \pii b \xi} \overline{\widehat{\phi}\pt{\frac{k-\xi}{\xi}}} e^{\pii k b }\\
    =& e^{\pii b \pt{k-\xi}} \overline{\widehat{\phi}\pt{\frac{k-\xi}{\xi}}}.
 \end{align*}
 We observe that
 \begin{align}
 \nonumber
  \norm{\overline{\widehat{\phi}\pt{\frac{k - \cdot}{\cdot}}}}^2_{L^2 \pt{[0,1], 
  \frac{1}{|\xi|} } }=& \int_{\Rr} \abs{\overline{\widehat{\phi}\pt{\frac{k-\xi}{\xi}}}}^2\frac{1}{|\xi|} d\xi\\
  \nonumber
  =&\int_{\Rr} \abs{\overline{\widehat{\phi}\pt{\omega -1}}}^2 \frac{|\omega|}{|k|} \frac{|k|}{|\omega|^2}d\omega\\
  =& \int_{\Rr} \abs{\overline{\widehat{\phi} \pt{w}} }^2\frac{dw}{|w+1|}= \norm{\phi}_Z^2.
 \end{align}
Therefore,
\[
 \norm{\ststst\pt{e^{\pii k \cdot},\phi}}_Y^2=
 \norm{ \ststst_{\phi} e^{\pii k \cdot} }^2_Y=\norm{\phi}_Z^2. 
\]
The functions $\ptg{e^{\pii k t}}_{k \in \Z}$ are orthonormal in $\Lloi$. Notice that
\begin{align}
\nonumber
\pt{\ststst_\phi e^{\pii k \cdot}, \ststst_\phi e^{\pii k' \cdot}}_Y  & =\int_0^1 e^{\pii (k-k')b}db \int_\Rr  \overline{\widehat{\phi} \pt{ \frac{k-\xi}{\xi}} } \widehat{\phi} \pt{ \frac{k'-\xi}{\xi} } \frac{d\xi}{|\xi|}\\
 \label{eq:norm}
 & = \norm{\phi}_Z^2 \delta_0(k-k').
\end{align}
Using the definition of $\ststst$ and equation \eqref{eq:norm}, 
we conclude that if
\begin{align*}
f(t)= \sum_{k \in \Z} \hat{f}(k) e^{\pii k t},\qquad a.e.,
\end{align*}
then
\begin{align*}
  \|\pt{\ststst_\phi f}\pt{\cdot, \cdot} \|^2_{Y}&= \pt{\ststst_\phi f, \ststst_\phi f}_Y\\
  &= \sum_{k \in \Z}\sum_{ k'\in \Z} \pt{ \hat{f}(k)\ststst_\phi e^{\pii k \cdot},\hat{f}(k') \ststst_\phi e^{\pii k' \cdot}}_Y\\
  &=\sum_{k\in \Z} \pt{ \hat{f}(k)\ststst_\phi e^{\pii k \cdot},\hat{f}(k) \ststst_\phi e^{\pii k \cdot}}_Y\\
  &=  \sum_{k\in \Z} |\hat{f}(k)|^2 \norm{ \ststst_\phi e^{\pii k \cdot}}^2_Y\\
  &= \norm{\phi}^2_Z \|f\|^2_{\Lloi}.
\end{align*}
Therefore, $\ststst: \Lloi \times \pt{Z \times \Si\pt{\R}}\to Y$ is a continuous operator.
\end{proof}
\begin{lem}
 Let $\ststst : \Lloi \times \pt{Z \cap \Si\pt{\R}}\to Y$ defined as in Theorem \ref{thm:contl2}.
 Then, since $\Si\pt{\R}\cap Z$ is dense in $Z$, we can extend by continuity $\ststst$ to the whole
 of $Z$.
\end{lem}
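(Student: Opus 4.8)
The plan is to combine the norm identity established in Theorem~\ref{thm:contl2} with the density of $\Si(\R)\cap Z$ in $Z$ and the standard bounded-linear-transformation theorem. The only point that requires genuine care is the density statement, because the weight $1/\abs{1+\xi}$ defining the norm of $Z$ is singular at $\xi=-1$; the extension itself will then be a routine appeal to the completeness of $Y$.

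First I would settle the density. The Fourier transform maps $\Si(\R)$ bijectively onto itself and, by the definition \eqref{eq:defZ} of $Z$, it is a unitary isomorphism $\F\colon Z\to L^2\pt{\R,\frac{d\xi}{\abs{1+\xi}}}$; under it $\Si(\R)\cap Z$ corresponds to $\ptg{g\in\Si(\R)\ \mid\ \int\abs{g(\xi)}^2\,\frac{d\xi}{\abs{1+\xi}}<\infty}$, which contains $C_c^\infty\pt{\R\setminus\ptg{-1}}$. Since $\ptg{-1}$ is Lebesgue-null we have $L^2\pt{\R,\frac{d\xi}{\abs{1+\xi}}}=L^2\pt{\R\setminus\ptg{-1},\frac{d\xi}{\abs{1+\xi}}}$, and on the open set $\R\setminus\ptg{-1}$ the weight is positive and locally integrable, so $C_c^\infty\pt{\R\setminus\ptg{-1}}$ is dense there: concretely one truncates $g$ to $\ptg{\abs{\xi}\le R}$, multiplies by $\chi_{\ptg{\abs{1+\xi}\ge\delta}}$ -- both steps converging as $R\to\infty$, $\delta\to0$ by dominated convergence -- and mollifies the resulting compactly supported function, which is bounded away from $-1$ so that there the weight is comparable to Lebesgue measure. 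Pulling back by $\F^{-1}$ gives the density of $\Si(\R)\cap Z$ in $Z$.

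Then I would perform the extension. Fix $f\in\Lloi$. By Theorem~\ref{thm:contl2} the map $\phi\mapsto\ststst(f,\phi)$ is linear on the dense subspace $Z\cap\Si(\R)$ of the Hilbert space $Z$ and obeys $\norm{\ststst(f,\phi)}_Y=\norm{f}_{\Lloi}\norm{\phi}_Z$, so it is bounded with operator norm $\norm{f}_{\Lloi}$. Since $Y$ is complete, the bounded-linear-transformation theorem yields a unique continuous linear extension to all of $Z$: for $\phi\in Z$ one picks $\phi_n\in Z\cap\Si(\R)$ with $\phi_n\to\phi$, notes that $\ptg{\ststst(f,\phi_n)}$ is Cauchy in $Y$ because $\norm{\ststst(f,\phi_n)-\ststst(f,\phi_m)}_Y=\norm{f}_{\Lloi}\norm{\phi_n-\phi_m}_Z$, and defines $\ststst(f,\phi)$ as its (sequence-independent) limit. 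Linearity in $\phi$ and the norm identity pass to the limit, linearity in $f$ is preserved since each $\ststst(\cdot,\phi_n)$ is linear, and the identity $\norm{\ststst(f,\phi)}_Y=\norm{f}_{\Lloi}\norm{\phi}_Z$ makes the resulting bilinear map $\ststst\colon\Lloi\times Z\to Y$ jointly continuous. The main obstacle, as noted, is the behaviour of the weight near $\xi=-1$ in the density step; everything else is the standard continuous-extension argument.
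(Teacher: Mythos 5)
Your proposal is correct and follows essentially the route the paper intends (the paper states this lemma without a separate proof): the isometry-type identity $\norm{\ststst(f,\phi)}_Y=\norm{f}_{\Lloi}\norm{\phi}_Z$ from Theorem \ref{thm:contl2}, the density of $\Si\pt{\R}\cap Z$ in $Z$ -- which you rightly verify by passing to $L^2\pt{\R,\frac{d\xi}{\abs{1+\xi}}}$ via $\F$ and truncating away the singularity at $\xi=-1$ before mollifying -- and the standard bounded-extension argument using completeness of $Y$. The only cosmetic point is that $\phi\mapsto\ststst(f,\phi)$ is antilinear rather than linear in $\phi$ (because of the conjugate on $\widehat{\phi}$), which changes nothing in the Cauchy-sequence extension.
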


\begin{rem}
 Theorem \ref{thm:contl2} is the discrete counterpart of Theorem \ref{ResForWav} in the case of periodic functions. 
\end{rem}

In Section \ref{sec:propbase}, we proved that the DOST functions form an orthonormal 
basis of $\Lloi$. 
Let us assume that $\phi$ belongs to $Z$ defined in \eqref{eq:defZ}.
Then,  by Theorem \ref{thm:contl2}, $\ststst_\phi: \Lloi\to Y$ is continuous.
So, we can write
\begin{align}
 \nonumber \pt{\ststst_\phi f} &= \pt{\ststst_\phi \sum \pt{f, \dnbt}_{\Lloi}\dnbt} \\
 &\nonumber=\sum \pt{f, \dnbt}_{\Lloi} \pt{\ststst_\phi \dnbt} \\
 &\label{eq:dost} =\sum \fdnbt \pt{\ststst_\phi \dnbt},
 \end{align}
where
\begin{align*}
 \fdnbt= \pt{f, \dnbt}_{\Lloi}
\end{align*}
and the sum in \eqref{eq:dost} is over all $\dnbt$ functions.
Hence, in order to understand the $\ststst_\phi$-transform of a general function $f\in\Lloi$, it is sufficient
to evaluate the coefficients $\fdnbt$ and
determine once for all the $\ststst_\phi$-transform of $\dnbt$. 

Notice that, for $p>0$,
\begin{align*}
 \dnbt \pt{t} = &\frac{1}{\sqrt{\beta(p)}}\sum_{j=0}^{\beta(p)-1} e^{ \pii (\beta(p)+ j)(t-\tau/\beta(p))}\\
 = & \frac{1}{\sqrt{\beta(p)}}\sum_{j=0}^{\beta(p)-1} T_{- \tau/\beta(p)} M_{\beta(p)+j} \mathds{1} \pt{t}.
\end{align*}
Hence, we can write
 \begin{align}
\label{eq:foudos}
\nonumber \pt{\F \dnbt}\pt{\xi}  = & \frac{1}{\sqrt{\beta(p)}} \sum_{j=0}^{\beta(p)-1} \pt{\F \T_{- \tau/\beta(p)} \M_{(\beta(p)+j)} \mathds{1} } \pt{\xi}\\
 \nonumber  = &\frac{1}{\sqrt{\beta(p)}} \sum_{j=0}^{\beta(p)-1} \pt{\M_{-\tau/\beta(p)}  \T_{-\beta(p)-j}\F \, \mathds{1} }\pt{\xi}\\
 \nonumber = &\frac{1}{\sqrt{\beta(p)}} \sum_{j=0}^{\beta(p)-1} \pt{\M_{-\tau/\beta(p)}  \T_{-\beta(p)-j} \delta_0 }\pt{\xi}\\
 \nonumber = &\frac{1}{\sqrt{\beta(p)}} \sum_{j=0}^{\beta(p)-1} e^{- \pii \frac{\tau}{\beta(p)} \xi }\, \delta_0\pt{\xi-\beta(p)-j}\\
 = &\frac{1}{\sqrt{\beta(p)}} \sum_{j=0}^{\beta(p)-1} e^{ - \pii  \pt{\beta(p)+j} \frac{\tau}{\beta(p)}  }\delta_{\beta(p)+j}(\xi).
\end{align}
Let us compute the $\ststst_\phi$-transform of a basis function $\dnbt$ 
with a general window $\phi$ belonging to $Z$.
We assume $\widehat{\phi}$ continuous.
By Theorem \ref{thm:contl2} and equation \eqref{eq:foudos}, we obtain
\begin{align}
\label{eq:sD}
\nonumber \phantom{=} &e^{ \pii b \xi }\pt{\ststst_\phi{\dnbt }}\pt{b, \xi}\\
\nonumber = & \Fi_{\zeta\mapsto b} \left( \overline{\widehat{\phi} \left( \frac{\zeta-\xi}{\xi}\right)}
\pt{\F\dnbt} (\zeta)\right)\pt{b}\\
\nonumber= &\F^{-1}_{\zeta\mapsto b}\left(\sum_{j=0}^{\beta(p)-1} 
\frac{e^{-\pii (\beta(p)+j)\tau/\beta(p)}} {\sqrt{\beta(p)}}
\overline{\widehat{\phi} \left( \frac{\zeta-\xi}{\xi}\right)} \delta_{(\beta(p)+j)}(\zeta)\right)\pt{b} \\
\nonumber= &\F^{-1}_{\zeta\mapsto b}\left(\sum_{j=0}^{\beta(p)-1} \frac{e^{-\pii (\beta(p)+j)\tau/\beta(p)}}
{\sqrt{\beta(p)}}
\overline{\widehat{\phi} \left( \frac{\beta(p) +j -\xi}{\xi}\right) }\delta_{(\beta(p)+j)}(\zeta)\right)\pt{b} \\
= &\left(\sum_{j=0}^{\beta(p)-1} \frac{e^{- \pii (\beta(p)+j)\tau/\beta(p)}} {\sqrt{\beta(p)}}
\overline{\widehat{\phi} \left( \frac{\beta(p) +j -\xi}{\xi}\right)} e^{ \pii b(\beta(p)+j)}\right)\pt{b}.
\end{align}
We set, for each fixed window $\phi$, $\cpfj : \Rr\to \Rr$ as 
\begin{align}
\label{eq:cjk}
\cpfj (\xi) = \overline{ \widehat{\phi}\left( \frac{\beta(p)+ j-\xi }{\xi}\right)}, \quad \xi\neq 0.
\end{align}
Hence, \eqref{eq:sD} simplifies into 
\begin{align}
\label{eq:sDc}
\pt{\ststst_\phi{\dnbt}}\bxi = e^{- \pii b \xi }\left(\sum_{j=0}^{\beta(p)-1} \frac{e^{\pii (\beta(p)+j) (b-\tau/\beta(p))}} {\sqrt{\beta(p)}}
\cpfj \pt{\xi}\right), \quad \xi\neq 0.
\end{align}
Equations \eqref{eq:sDc} and \eqref{eq:dost}
 provide an explicit expression of the $\ststst_\phi$-transform of 
a periodic signal $f$ in terms of its Stockwell coefficients $\fdnbt$.
Notice that if $\hat{\phi}$ is not a continuous function then 
equation \eqref{eq:sD}, \eqref{eq:cjk} and equation \eqref{eq:sDc}
must be understood as \emph{a.e.} equivalences.
\section{Discretization of the \texorpdfstring{$\ststst_\phi$-transform}{S-transform} }
\label{sec:discr}
Let us consider an admissible window and a dyadic decomposition of the frequency domain (see 
Section \ref{sec:propbase}).
We study the $\ststst_\phi$-transform of the periodic signal $f$ at 
$\xi= \nu(p)$.
Some conditions on the window $\phi$ are necessary in order to evaluate 
 $\ststst_{\phi}$-transform punctually.
 \begin{Ass}
  \label{Ass:wind}
  Let $\widehat{\phi}$ be a
  function in $L^\infty(\Rr)$ such that  $\widehat{\phi}|_{\pt{-\frac{1}{3}, \frac{1}{3}}}$
  is continuous, and such 
 that
 \begin{align*}
    \widehat{\phi}\pt{\xi}&\neq 0, \quad |\xi|<\frac{1}{3},  \\
    \widehat{\phi}\pt{\xi}&=0, \quad |\xi|>\frac{1}{3}, \qquad a.e.  \\
    \nonumber
     \lim_{\xi\to -\frac{1}{3}^+}& \widehat{\phi}\pt{\xi}=c<\infty.
 \end{align*}
 Notice that  $\phi$ belongs to the set of admissible windows $Z$.
 \end{Ass}

In the sequel we want  to evaluate
$\widehat{\phi}$ punctually. So, we need to perform a regularizing procedure.

\begin{lem}
  \label{lem:seq}
 Let $\phi$ be an admissible function satisfying Assumption \ref{Ass:wind}.
 Then it is possible to construct a  sequence of continuous functions 
 $\ptg{\phi_R}_{R=1}^{\infty}$  such that
 \footnote{Notice that, if $\phi$ satisfies \eqref{eq:condsup} and \eqref{eq:compat}, 
 then \eqref{eq:conpunt} implies \eqref{eq:connorm} by means of Lebesgue's Convergence Theorem.}
 \begin{align}
   \nonumber
   \widehat{\phi}_R& \mbox{ is continuous },\\
   \label{eq:conpunt}\widehat{\phi}_R\pt{\xi} &\to \widehat{\phi}\pt{\xi}, \quad \text{punctually},\\
   \label{eq:connorm}\phi_R\pt{\xi} &\to \phi\pt{\xi}, \quad \text{ in the set of admissible windows}\; Z.
 \end{align}
 Moreover, we can suppose that 
 \begin{align}
 \label{eq:condsup}
 \overline{\widehat{\phi}_{\Raggio}\pt{\xi}} & =  0,
 \qquad \xi\in \Rr\setminus \left({-\frac{1}{3}-\frac{2}{3 \beta( \Raggio)}}, {\frac{1}{3}}\right),\\ 
 \label{eq:compat}
 \overline{\widehat{\phi}_{\Raggio}\pt{\xi}}&=\widehat{\phi}\pt{\xi}, \quad \xi \in \pt{ -\frac{1}{3},{\frac{1}{3}-\frac{2}{3\beta \pt{\Raggio}}}}.
 \end{align}
\end{lem}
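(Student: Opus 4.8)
The plan is to construct the regularizing sequence $\{\phi_R\}$ explicitly by convolving $\widehat\phi$ with a mollifier whose support shrinks at rate $\sim 1/\beta(R)$, while taking care of the jump discontinuity at $\xi=-\tfrac13$. First I would fix a smooth, compactly supported, nonnegative mollifier $\rho \in C_c^\infty(\R)$ with $\int\rho = 1$ and $\supp\rho \subseteq (-1,0)$ — the one-sided support is the key trick, since it lets the mollification ``reach to the right'' across the discontinuity at $-\tfrac13$ so that the regularized function agrees with $\widehat\phi$ on an interval of the form $(-\tfrac13,\tfrac13 - \tfrac{2}{3\beta(R)})$ without being affected by the values of $\widehat\phi$ near $-\tfrac13$. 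Concretely, for $R\in\Zp$ set $\varepsilon_R = \tfrac{2}{3\beta(R)}$, $\rho_R(\xi) = \varepsilon_R^{-1}\rho(\xi/\varepsilon_R)$, extend $\widehat\phi$ by $0$ outside $(-\tfrac13,\tfrac13)$, and define
\begin{align*}
 \overline{\widehat{\phi}_R(\xi)} = \left(\widehat\phi * \rho_R\right)(\xi) = \int_{-\varepsilon_R}^{0} \widehat\phi(\xi - \varepsilon_R s)\,\rho(s/\varepsilon_R)\,\tfrac{ds}{\varepsilon_R},
\end{align*}
then set $\phi_R = \F^{-1}\widehat{\phi_R}$.

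Next I would verify the four required properties in turn. Continuity of $\widehat{\phi}_R$: a convolution of an $L^\infty$ function with an $L^1$ (indeed $C_c^\infty$) kernel is continuous — standard. Support property \eqref{eq:condsup}: since $\supp\rho_R\subseteq(-\varepsilon_R,0)$ and $\supp\widehat\phi\subseteq[-\tfrac13,\tfrac13]$, the support of the convolution lies in $[-\tfrac13,\tfrac13] + (-\varepsilon_R,0) \subseteq (-\tfrac13-\varepsilon_R, \tfrac13)$, which is exactly $(-\tfrac13 - \tfrac{2}{3\beta(R)},\tfrac13)$; one takes complex conjugates to match the statement. Compatibility \eqref{eq:compat}: for $\xi\in(-\tfrac13, \tfrac13 - \varepsilon_R)$ and $s\in(-1,0)$ we have $\xi - \varepsilon_R s \in (\xi, \xi+\varepsilon_R) \subseteq (-\tfrac13,\tfrac13)$, so there $\widehat\phi$ is the continuous restriction $\widehat\phi|_{(-1/3,1/3)}$; but there is still the point is that the convolution need not reproduce $\widehat\phi$ unless $\widehat\phi$ is \emph{continuous} on the relevant window — which it is by Assumption \ref{Ass:wind}. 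Actually to get exact equality $\widehat{\phi}_R = \widehat\phi$ on that interval (not merely approximate), I would instead modify the construction slightly: on $(-\tfrac13 + \varepsilon_R, \tfrac13 - \varepsilon_R)$ keep $\widehat\phi$ unchanged and only mollify in a neighborhood of the two endpoints $\pm\tfrac13$, glued together with a smooth partition of unity; this trivially gives \eqref{eq:compat} and keeps continuity, at the cost of a slightly less clean formula. Pointwise convergence \eqref{eq:conpunt}: for $|\xi|<\tfrac13$, $\xi$ is an interior point of continuity of $\widehat\phi$, so $(\widehat\phi*\rho_R)(\xi)\to\widehat\phi(\xi)$ as $\varepsilon_R\to 0$ by the standard mollifier-at-a-point-of-continuity argument; for $|\xi|>\tfrac13$ both sides are eventually $0$ (once $\varepsilon_R < \dist(\xi, [-\tfrac13,\tfrac13])$); the single point $\xi=\tfrac13$ and the jump point $\xi=-\tfrac13$ are a null set, so pointwise-a.e. convergence holds (and that is all \eqref{eq:conpunt} needs once one reads it up to a null set, consistent with the paper's conventions).

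Finally, convergence in $Z$ \eqref{eq:connorm}: by definition $\|\phi_R - \phi\|_Z^2 = \int |\widehat{\phi}_R(\xi) - \widehat\phi(\xi)|^2 \tfrac{d\xi}{|1+\xi|}$, and the integration effectively runs over the bounded set $(-\tfrac13 - \varepsilon_1,\tfrac13)$, on which $\tfrac{1}{|1+\xi|}$ is bounded (it stays away from the pole at $\xi=-1$). Since $\widehat{\phi}_R$ is a mollification of the bounded function $\widehat\phi$, the family $\{\widehat{\phi}_R\}$ is uniformly bounded in $L^\infty$ of this set, hence dominated by a fixed $L^1$ function against the measure $\tfrac{d\xi}{|1+\xi|}$; combined with the pointwise-a.e. convergence \eqref{eq:conpunt}, Lebesgue's Dominated Convergence Theorem gives \eqref{eq:connorm} — which is exactly the footnote's remark. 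The main obstacle, and the only genuinely delicate point, is handling the jump discontinuity of $\widehat\phi$ at $\xi = -\tfrac13$: a symmetric mollifier would smear the jump across $-\tfrac13$ and destroy exact compatibility \eqref{eq:compat} near the left endpoint; the one-sided (rightward-reaching) mollifier, or equivalently localizing the mollification away from a neighborhood of the endpoints, is what makes \eqref{eq:condsup} and \eqref{eq:compat} hold simultaneously with the precise constants $\tfrac{2}{3\beta(R)}$ appearing in the statement.
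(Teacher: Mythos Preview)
Your handling of continuity, the support condition \eqref{eq:condsup}, pointwise convergence, and the dominated-convergence argument for \eqref{eq:connorm} are all fine. The gap is in \eqref{eq:compat}. As you yourself notice, convolution with a mollifier --- even a one-sided one --- gives only $(\widehat\phi * \rho_R)(\xi) \approx \widehat\phi(\xi)$ at points of continuity, not exact equality. Your proposed fix (keep $\widehat\phi$ unchanged on $(-\tfrac13+\varepsilon_R,\tfrac13-\varepsilon_R)$ and mollify only near the endpoints, glued with a partition of unity) still does not deliver \eqref{eq:compat}, which demands exact agreement on the \emph{larger} interval $(-\tfrac13,\tfrac13-\varepsilon_R)$, extending all the way down to the left endpoint; any mollification or blending near $-\tfrac13$ will alter the values of $\widehat\phi$ on $(-\tfrac13,-\tfrac13+\varepsilon_R)$, destroying the required equality there.

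The paper's construction avoids mollification entirely and is simpler. It exploits the hypothesis $\lim_{\xi\to(-1/3)^+}\widehat\phi(\xi)=c<\infty$ from Assumption~\ref{Ass:wind} --- a hypothesis you never invoke --- to define a continuous extension $\widetilde{\widehat\phi}$ by setting $\widetilde{\widehat\phi}(\xi)=c$ for $\xi\le-\tfrac13$ and $\widetilde{\widehat\phi}=\widehat\phi$ for $\xi>-\tfrac13$, and then simply multiplies by a smooth cutoff $\omega_R$ equal to $1$ on $(-\tfrac13,\tfrac13-\varepsilon_R)$ and $0$ outside $(-\tfrac13-\varepsilon_R,\tfrac13)$, taking $\widehat{\phi}_R=\omega_R\,\widetilde{\widehat\phi}$. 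Then \eqref{eq:compat} is immediate (since $\omega_R\equiv1$ and $\widetilde{\widehat\phi}=\widehat\phi$ there), \eqref{eq:condsup} follows from the support of $\omega_R$, and continuity holds because $\widetilde{\widehat\phi}$ is continuous across $-\tfrac13$ by construction and $\omega_R$ is smooth (the possible discontinuity of $\widehat\phi$ at $\tfrac13$ is harmless since $\omega_R$ vanishes there and $\widehat\phi\in L^\infty$). The role of the one-sided limit condition at $-\tfrac13$ in Assumption~\ref{Ass:wind} is precisely to enable this continuous extension; that is the missing idea in your argument.
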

\begin{proof}
 We can consider the smooth function
 \[
  \omega_R\pt{\xi}= \left\{ \begin{array}{ll}
                           0 &, \quad \xi \in \R \setminus \pt{-\frac{1}{3}-\frac{2}{3 \beta\pt{\Raggio}}, \frac{1}{3} },\\  
                           1 &, \quad \xi \in \pt{-\frac{1}{3}, 
                           \frac{1}{3}-\frac{2}{3 \beta(\Raggio)} } . 
                            \end{array}
		   \right. 
 \]
 Since $\phi$ satisfies Assumption \ref{Ass:wind}, we can define
 \[
  \widetilde{\widehat{\phi}}\pt{\xi}=\left\{
				   \begin{array}{cc} 
				      \lim_{\xi\to -\frac{1}{3}^+}\widehat{\phi}\pt{\xi}& \quad \xi\leq -\frac{1}{3},\\
	                               \widehat{\phi}\pt{\xi}, &\quad \xi>-\frac{1}{3}.		   
				   \end{array}
			    \right. .
 \]
 Then $\phi_\Raggio(t)=\F_{\xi \mapsto t}^{-1}\pt{\omega_R\pt{\xi} \widetilde{\widehat{\phi}}\pt{\xi}}$
 has the desired properties.
\end{proof}

Let $\phi$ be an admissible window satisfying Assumption \ref{Ass:wind}
and $\ptg{\phi_\Raggio}_{\Raggio=1}^{\infty}$ a sequence as in Lemma \ref{lem:seq}. Then, by 
equation \eqref{eq:sDc}, we can write
\begin{align}
 \label{eq:sDd}
\pt{\ststst_{\phi_\Raggio} \dnbti }\pt{b, \nu(p)}=
e^{ -2\pi i b \nu(p) }\left(\sum_{j=0}^{\beta(p')-1} \frac{e^{2\pi i(\beta(p')+j) (t-\tau/\beta(p'))}} {\sqrt{\beta(p')}}
c_{p',j}^{\phi_{\Raggio}} (\nu(p))\right). 
\end{align}
Clearly, it is crucial to understand the values $c_{p',j}^{\phi_{\Raggio}}(\nu(p))$, 
which depend on the window $\phi$ only if $|p|\leq \Raggio$ and $|p'|\leq R$. 

\begin{pipi}
\label{prop:pnotp}
Let $\phi$ be an admissible window satisfying Assumption \ref{Ass:wind}
and $\ptg{\phi_{\Raggio}}_{R=1}^{\infty}$ be the associated sequence
defined in Lemma \ref{lem:seq}. Then
  \begin{align}
   \label{eq:zero}
   c_{p',j}^{\phi_{\Raggio }}( \nu(p))=0, 
   \qquad \forall j=0, \ldots, \beta(p')-1 \quad \mbox{if }p'\neq p,
   \;|p|\leq \Raggio, |p'|\leq \Raggio.
  \end{align}
\end{pipi}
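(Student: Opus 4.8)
The plan is to show that $c_{p',j}^{\phi_R}(\nu(p))$ vanishes whenever $p' \neq p$ by tracking the argument of $\widehat{\phi}_R$ appearing in the definition \eqref{eq:cjk}. Recall $c_{p',j}^{\phi_R}(\xi) = \overline{\widehat{\phi}_R\pt{\frac{\beta(p')+j-\xi}{\xi}}}$, so evaluating at $\xi = \nu(p)$ I must understand where the quantity
\[
t_{p',j} = \frac{\beta(p')+j-\nu(p)}{\nu(p)}
\]
lands, for $j = 0, \ldots, \beta(p')-1$. By \eqref{eq:condsup} the function $\widehat{\phi}_R$ is supported in $\pt{-\frac{1}{3}-\frac{2}{3\beta(R)}, \frac{1}{3}}$, so it suffices to verify that $t_{p',j}$ lies \emph{outside} this interval whenever $p' \neq p$, under the constraints $|p|, |p'| \le R$.

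First I would recall from Section \ref{sec:propbase} that the index $\beta(p')+j$ ranges exactly over the $p'$-frequency band $[\beta(p'), 2\beta(p')-1] = [\nu(p')-\beta(p')/2, \nu(p')+\beta(p')/2-1]$, which together with the $p$-band forms part of a partition of $\N$. So when $p' \neq p$, the integer $k := \beta(p')+j$ is never in $[\nu(p)-\beta(p)/2, \nu(p)+\beta(p)/2-1]$. I would split into two cases according to whether $k$ lies strictly to the right or strictly to the left of the $p$-band. If $k \ge \nu(p)+\beta(p)/2$, then $t_{p',j} = \frac{k-\nu(p)}{\nu(p)} \ge \frac{\beta(p)/2}{\nu(p)}$; since $\nu(p) = 2^{p-1}+2^{p-2} = 3\beta(p)/2$ for $p \ge 2$ (and the small cases $p = 0,1$ are degenerate and handled directly), this gives $t_{p',j} \ge \frac{\beta(p)/2}{3\beta(p)/2} = \frac{1}{3}$, hence $t_{p',j} \ge \frac13$ so it is at the right endpoint or beyond; a short check that the inequality is actually strict (the bands are disjoint, $k > \nu(p)+\beta(p)/2 - 1$ forces $k \ge \nu(p)+\beta(p)/2$, and one needs $t_{p',j} = \frac13$ excluded — but $\frac13$ is not in the open support interval, so $\widehat\phi_R$ vanishes there anyway) finishes this case. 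If instead $k \le \nu(p)-\beta(p)/2-1$, then $t_{p',j} = \frac{k-\nu(p)}{\nu(p)} \le \frac{-\beta(p)/2-1}{\nu(p)} < -\frac{\beta(p)/2}{\nu(p)} = -\frac13$, so $t_{p',j} < -\frac13$, and I need the sharper bound $t_{p',j} \le -\frac13 - \frac{2}{3\beta(R)}$ to land outside the \emph{enlarged} support interval of \eqref{eq:condsup}; this should follow because $t_{p',j} \le \frac{-\beta(p)/2 - 1}{\nu(p)} = -\frac13 - \frac{1}{\nu(p)}$ and $\nu(p) = \frac32\beta(p) \le \frac32\beta(R)$ since $|p| \le R$ and $\beta$ is increasing, giving $\frac{1}{\nu(p)} \ge \frac{2}{3\beta(R)}$, exactly what is needed.

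The case of negative $p$ or $p'$ I would dispatch using the adjoint/conjugation relation $D_{-p,\tau} = \overline{D_{p,\tau}}$ noted in Section \ref{sec:propbase}, together with the corresponding symmetry $\nu(-p) = -\nu(p)$, $\beta(-p) = \beta(p)$, which reduces everything to the positive case after a sign change; and the degenerate bands $p \in \{0,1\}$ (where $\beta = 1$) I would just check by hand, noting $\nu(0) = 0$ makes $t_{p',j}$ undefined there so $p = 0$ must be treated as the DC term separately as in the paper's conventions, while for $p = 1$, $\nu(1) = 1$ and one computes directly.

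The main obstacle I anticipate is bookkeeping at the \emph{endpoints} of the bands: the support interval in \eqref{eq:condsup} is asymmetric — closed-looking at $\frac13$ on the right but extended by $\frac{2}{3\beta(R)}$ on the left — so the right-hand case is comfortable (anything reaching $\frac13$ is already outside the open support) while the left-hand case is tight and genuinely requires the extra margin $\frac{2}{3\beta(R)}$ and the hypothesis $|p| \le R$; getting the chain of inequalities $\frac{1}{\nu(p)} \ge \frac{2}{3\beta(R)}$ exactly right, and making sure the worst case $k = \nu(p)-\beta(p)/2-1$ is the relevant extreme, is where care is needed. Everything else is a direct substitution into \eqref{eq:cjk} combined with the partition-of-$\N$ structure of the frequency bands established in Theorem \ref{thm:basidost}.
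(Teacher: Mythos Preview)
Your proposal is correct and follows essentially the same route as the paper: both show that the argument $\frac{\beta(p')+j-\nu(p)}{\nu(p)}$ falls outside the support interval $\bigl(-\tfrac{1}{3}-\tfrac{2}{3\beta(R)},\tfrac{1}{3}\bigr)$ of $\widehat{\phi}_R$, with the tight left-hand case using $\nu(p)=\tfrac{3}{2}\beta(p)\le\tfrac{3}{2}\beta(R)$ exactly as you do. The only cosmetic difference is that the paper organizes the two cases by comparing $p'<p$ versus $p'>p$ (hence $\beta(p')\le\beta(p)/2$ or $\beta(p')\ge 2\beta(p)$) rather than by the position of $k=\beta(p')+j$ relative to the $p$-band, and it dispatches the mixed-sign case $p'>0$, $p<0$ directly (the ratio is $<-1$) rather than via the adjoint; these lead to the same chain of inequalities.
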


\begin{proof}
 We restrict ourselves to positive $p'$. For $p'<0$, it suffices to consider the adjoint.
 
 Let $|p|<\Raggio$, as in \eqref{eq:zero}. In view of the properties of $\phi_R$,
 in particular \eqref{eq:condsup}, the condition
 \begin{multline}
  \label{eq:condsup1}
  \left( \frac {\beta(p') +j}{\nu(p)} \right)-1 \not \in \left(
 { -\frac{1}{3}-\frac{2}{3 \beta\pt{\Raggio } } } ,
 { \frac{1}{3} }\right), \\ \quad  p\neq p',  j=0, \ldots, \beta(p')-1
 \end{multline}
implies relation \eqref{eq:zero}.
 If $p$ is negative, then $\nu(p)<0$ and 
\begin{align*}
\left( \frac {\beta(p') +j}{\nu(p)} \right)-1 <- 1\leq -\frac{1}{3}-\frac{2}{3 \beta(R)},
\end{align*}
 hence \eqref{eq:condsup1} is fulfilled for all $j=0, \ldots, \beta(p')-1$.

 If $p$ positive, 
 recalling the definition of $\beta(p')$ and $\nu(p')$,  condition \eqref{eq:condsup1}
 turn into
  \begin{multline}
   \label{eq:supg}
 \frac{2}{3}\left( \frac {\beta(p')}{\beta(p) }+\frac{j}{ \beta(p)} \right)-1 
 \not \in \left(
 { -\frac{1}{3}-\frac{2}{3 \beta\pt{\Raggio } } } ,
 { \frac{1}{3} }\right), \\ \quad   j=0, \ldots, \beta(p')-1, \; p\neq p'.
  \end{multline}

 If $p\neq p'$, then we have to consider two cases.
\begin{description}
\item [Case I - ] $p'< p$.
\end{description}
The definition of $\beta(p')$ implies that $\beta(p')\leq \beta(p)/2$. Therefore,
\begin{align*}
 &\frac{2}{3}\left( \frac {\beta(p')}{\beta(p) }+\frac{j}{ \beta(p)} \right) -1 \leq 
 \frac{2}{3}\left( \frac {1}{2 }+\frac{j}{ \beta(p)} \right) -1\\
 &\leq - \frac{2}{3} +\frac{2}{3}\frac{j}{\beta(p)}\leq  -\frac{2}{3}+\frac{2}{3} \frac{\beta(p')-1}{\beta(p)} 
 \leq  -\frac{2}{3}+\frac{1}{3}- \frac{2}{3\beta(p)}\leq - \frac{1}{3}-\frac{2}{3 \beta(\Raggio)}. 
\end{align*}
\begin{description}
\item [Case II - ] $p'> p$.
\end{description}
We have $\beta(p)\leq \beta(p')/2$, so we can write
\begin{align*}
 \frac{2}{3}\left( \frac {\beta(p')}{\beta(p) }+\frac{j}{ \beta(p)} \right)-1  \geq 
 \frac{2}{3} \left( 2 +\frac{ j}{ \beta(p)} \right) -1 \geq \frac{1}{3} + \frac{2}{3}\frac{j}{\beta(p)} \geq   \frac{1}{3}. 
\end{align*}
Thus, \eqref{eq:supg} is fulfilled in both cases.
\end{proof}

Let $\phi$ be an admissible window satisfying Assumption \ref{Ass:wind} and 
$\ptg{\phi_R}$ be as in Lemma \ref{lem:seq}. Then, by Proposition \ref{prop:pnotp},
the expression \eqref{eq:sDd} assumes a simplified form since it vanishes for 
all $p'\neq p$, provided $|p'|\leq \Raggio$ and $|p|\leq \Raggio$. When $p=p'$ we have
\begin{align}
\label{eq:sconbase}
\pt{\ststst_{\phi_\Raggio}{\dnbt}}\pt{b, \nu(p)} = e^{ -\pii b \nu(p) }\left(\sum_{j=0}^{\beta(p)-1} \frac{e^{\pii (\beta(p)+j) (b-\tau/\beta(p))}} {\sqrt{\beta(p)}}
c_{p,j}^{\phi_{\Raggio}} (\nu(p))\right). 
\end{align}
Assume that $ c_{p,j}^{\phi_{\Raggio}} \pt{\nu\pt{p}}=1$ for all $j=0,\dots ,\beta\pt{p}-1$, then, via \eqref{eq:sconbase} 
\begin{align}
\label{eq:sbaseubase}
\pt{\ststst_{\phi_{\Raggio}}{\dnbt}}\pt{b, \nu(p)}= e^{ -\pii b \nu(p) }\dnbt(b). 
\end{align}

In order to extend \eqref{eq:sbaseubase} to all $\dnbt$, we introduce the following proposition. 

\begin{pipi}\label{pro:chibas}
Set $\cchi=\F^{-1}\chi$ be such that
 \begin{align}
 \label{eq:phiR}
 \overline{\pt{\F \cchi}}\pt{\xi}=\overline{\chi} \pt{\xi}=\left\{
                                            \begin{array}{ll} 
  					 0 &\quad  \xi \in \left( -\infty, -\frac{1}{3} \right]
  					 \cup \left[ \frac{1}{3}, +\infty \right)\\ 					 
  					 1 & \quad \xi \in \left(-\frac{1}{3},\frac{1}{3} 
  					 \right)
                                       \end{array}
\right..
 \end{align} 
Then $\cchi$ satisfies Assumption \ref{Ass:wind} and
\begin{align}
\label{eq:punctbas}
\pt{\ststst_{\cchi_\Raggio}{\dnbt}}\pt{b, \nu(p')} = 
e^{ -\pii b \nu(p) }\dnbt(b) \delta_0\pt{p-p'},\quad \text{for all } |p|\leq \Raggio, |p'|\leq \Raggio,
\end{align}
where $\ptg{\cchi_{\Raggio}}_{R=1}^{\infty}$ is a sequence converging to $\cchi$ as
in Lemma \ref{lem:seq}.
\end{pipi}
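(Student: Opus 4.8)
The plan is to check the three assertions in turn: that $\cchi$ meets Assumption~\ref{Ass:wind}, that the terms with $p\neq p'$ in \eqref{eq:punctbas} vanish, and that the term with $p=p'$ reproduces $e^{-\pii b\nu(p)}\dnbt(b)$. Since $\cchi=\Fi\chi$ we have $\widehat{\cchi}=\chi=\chi_{\pt{-\frac13,\frac13}}$, which is bounded, identically $1$ (hence continuous) on $\pt{-\frac13,\frac13}$, nonzero there, zero outside, and satisfies $\lim_{\xi\to-\frac13^+}\widehat{\cchi}\pt{\xi}=1<\infty$; thus $\cchi$ satisfies Assumption~\ref{Ass:wind}, and, as noted right after that assumption, $\cchi\in Z$. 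Consequently a regularizing sequence $\ptg{\cchi_\Raggio}_{\Raggio=1}^{\infty}$ as in Lemma~\ref{lem:seq} exists, with $\widehat{\cchi_\Raggio}$ continuous, $\overline{\widehat{\cchi_\Raggio}}$ supported in $\pt{-\frac13-\frac{2}{3\beta(\Raggio)},\frac13}$ by \eqref{eq:condsup}, and $\overline{\widehat{\cchi_\Raggio}}=\widehat{\cchi}=1$ on $\pt{-\frac13,\frac13-\frac{2}{3\beta(\Raggio)}}$ by \eqref{eq:compat}.

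For the off-diagonal case, fix $p\neq p'$ with $\abs p\leq\Raggio$ and $\abs{p'}\leq\Raggio$. Proposition~\ref{prop:pnotp}, applied with its band index taken to be our $p$ and its evaluation index our $p'$, yields $c_{p,j}^{\cchi_\Raggio}\pt{\nu(p')}=0$ for every $j=0,\dots,\beta(p)-1$. Substituting this into \eqref{eq:sDd} (that is, into \eqref{eq:sDc} evaluated at $\xi=\nu(p')$) makes the whole sum vanish, so $\pt{\ststst_{\cchi_\Raggio}\dnbt}\pt{b,\nu(p')}=0$, which is \eqref{eq:punctbas} in the case $p\neq p'$.

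It remains to treat $p=p'$. Since $\cchi$ satisfies Assumption~\ref{Ass:wind}, \eqref{eq:sconbase} applies and gives
\[
  \pt{\ststst_{\cchi_\Raggio}\dnbt}\pt{b,\nu(p)}=e^{-\pii b\nu(p)}\sum_{j=0}^{\beta(p)-1}\frac{e^{\pii(\beta(p)+j)(b-\tau/\beta(p))}}{\sqrt{\beta(p)}}\,c_{p,j}^{\cchi_\Raggio}\pt{\nu(p)},
\]
so by \eqref{eq:sbaseubase} it suffices to prove $c_{p,j}^{\cchi_\Raggio}\pt{\nu(p)}=1$ for all $j=0,\dots,\beta(p)-1$. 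By \eqref{eq:cjk} this amounts to $\widehat{\cchi_\Raggio}\pt{\frac{\beta(p)+j-\nu(p)}{\nu(p)}}=1$. Using $\nu(p)=\frac32\beta(p)$ one finds $\frac{\beta(p)+j-\nu(p)}{\nu(p)}=-\frac13+\frac{2j}{3\beta(p)}$, so, as $j$ ranges over $0,\dots,\beta(p)-1$, the argument stays in $\ptq{-\frac13,\frac13-\frac{2}{3\beta(p)}}\subseteq\ptq{-\frac13,\frac13-\frac{2}{3\beta(\Raggio)}}$, the inclusion holding because $\abs p\leq\Raggio$. On the open interval $\pt{-\frac13,\frac13-\frac{2}{3\beta(\Raggio)}}$ we have $\widehat{\cchi_\Raggio}=\overline{\widehat{\cchi}}=1$ by \eqref{eq:compat}, and, $\widehat{\cchi_\Raggio}$ being continuous (Lemma~\ref{lem:seq}), the value $1$ persists at the two endpoints of that interval. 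Hence $c_{p,j}^{\cchi_\Raggio}\pt{\nu(p)}=\overline{1}=1$ for all $j$, and \eqref{eq:sbaseubase} gives $\pt{\ststst_{\cchi_\Raggio}\dnbt}\pt{b,\nu(p)}=e^{-\pii b\nu(p)}\dnbt(b)$. The cases where $p$ or $p'$ lies in $\ptg{0,1}$ follow from the same, shorter computation (trivially for the zero band, with the usual convention at $\xi=0$), and negative indices reduce to positive ones via $\dnbt=\overline{\dnbtneg}$, exactly as in Proposition~\ref{prop:pnotp}.

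The delicate point is the diagonal case: one must verify that the sampling frequencies $-\frac13+\frac{2j}{3\beta(p)}$, in particular the extremes $j=0$ (giving $-\frac13$) and $j=\beta(p)-1$ (giving $\frac13-\frac{2}{3\beta(p)}$), fall inside the region on which the regularized window $\widehat{\cchi_\Raggio}$ is still identically $1$; the two boundary values are not covered by \eqref{eq:compat} itself and are handled through the continuity of $\widehat{\cchi_\Raggio}$ from Lemma~\ref{lem:seq}. Everything else is bookkeeping with formulas already derived.
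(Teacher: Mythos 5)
Your proof is correct and follows essentially the same route as the paper: Proposition \ref{prop:pnotp} kills the off-diagonal terms, and for $p=p'$ one checks via \eqref{eq:cjk} that $c^{\cchi_\Raggio}_{p,j}\pt{\nu(p)}=1$ so that \eqref{eq:sconbase}--\eqref{eq:sbaseubase} apply; the paper's own proof is exactly this, stated in one line. Your only addition is making explicit the endpoint subtlety at the sampling value $-\tfrac13$ (and at $\tfrac13-\tfrac{2}{3\beta(\Raggio)}$ when $p=\Raggio$), resolved through the continuity of $\widehat{\cchi}_\Raggio$ from Lemma \ref{lem:seq} -- a detail the paper leaves implicit and which is indeed the reason the regularized windows are used.
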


\begin{proof}
It follows from the definition of $\cpfj$ and by \eqref{eq:sbaseubase}.
\end{proof}
In order to extend the punctual evaluation \eqref{eq:punctbas} to all periodic signal
in $\Lloi$ we need to introduce another regularizing procedure in the frequency domain.
\begin{Def}
  \label{def:cut}
 We define the Fourier multiplier
 \begin{align}
  T_\Raggio: \Lloi &\to \Lloi\\
      f=\sum_{k\in \Z}\hat{f}\pt{k} e^{\pii k t}&\mapsto 
      \sum_{|k|< 2 \beta\pt{\Raggio}} \hat{f}\pt{k} e^{\pii k t}.
 \end{align}

\end{Def}

\begin{pipi}
\label{prop:DOST}
Let $f$ be a periodic signal and $\ptg{\cchi_\Raggio}_{R=1}^{\infty}$ defined as in 
Proposition \ref{pro:chibas} and $T_R$ as in Definition \ref{def:cut}. 
Then
\begin{align}
\label{eq:dostloc}
  \pt{\ststst_{\cchi_{\Raggio}} T_R f}\pt{\frac{\tau}{\beta\pt{p}}, \nu(p)} = 
  (-1)^\tau \sqrt{\beta(p)} \fdnbt ,\quad \tau=0, \ldots, \beta(p)-1,\quad |p|\leq \Raggio,
\end{align}
where
\[
  \fdnbt= \pt{f, \dnbt}_{\Lloi}.
\]

\end{pipi}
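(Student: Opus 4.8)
The plan is to combine the pointwise evaluation in Proposition \ref{pro:chibas} with the expansion \eqref{eq:dost} of a periodic signal in the DOST basis, taking care that the truncation $T_\Raggio$ removes exactly the high-frequency tail that would otherwise spoil a clean finite sum. First I would write $T_\Raggio f = \sum_{|k| < 2\beta(\Raggio)} \widehat{f}(k) e^{\pii k t}$; since the frequency bands $[\beta(p), 2\beta(p)-1]$ partition $\N$ (and their negatives partition $-\N$), the condition $|k| < 2\beta(\Raggio)$ is equivalent to collecting precisely the bands with $|p| \le \Raggio$. Therefore, using that $\{\dnbt\}$ restricted to $|p|\le\Raggio$ is an orthonormal basis of $\Span\{e^{\pii k t} : |k| < 2\beta(\Raggio)\}$ (from Theorem \ref{thm:basidost} together with Step III of its proof), I can write
\begin{align*}
 T_\Raggio f = \sum_{|p|\le \Raggio} \sum_{\tau=0}^{\beta(|p|)-1} \fdnbt\, \dnbt ,
\end{align*}
a \emph{finite} linear combination, so no convergence subtleties arise.

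Next I would apply $\ststst_{\cchi_\Raggio}$, which is linear and continuous on $\Lloi$ (Theorem \ref{thm:contl2}, since $\cchi$ — hence each $\cchi_\Raggio$ — lies in $Z$), and evaluate at $(b,\xi) = (\tau/\beta(p), \nu(p))$ for a fixed $|p|\le\Raggio$ and $\tau \in \{0,\dots,\beta(p)-1\}$. By Proposition \ref{pro:chibas}, $\pt{\ststst_{\cchi_\Raggio} D_{p',\tau'}}\pt{b,\nu(p)} = e^{-\pii b\nu(p)}\, D_{p',\tau'}(b)\,\delta_0(p-p')$ for all $|p'|\le\Raggio$, so in the double sum only the terms with $p' = p$ survive:
\begin{align*}
 \pt{\ststst_{\cchi_\Raggio} T_\Raggio f}\pt{\tfrac{\tau}{\beta(p)}, \nu(p)}
 = e^{-\pii \frac{\tau}{\beta(p)}\nu(p)} \sum_{\tau'=0}^{\beta(p)-1} f_{p,\tau'}\, D_{p,\tau'}\!\pt{\tfrac{\tau}{\beta(p)}} .
\end{align*}
Now Corollary \ref{pro:nul} gives $D_{p,\tau'}(\tau/\beta(p)) = \sqrt{\beta(p)}\,\delta_0(\tau-\tau')$, collapsing the inner sum to the single term $\sqrt{\beta(p)}\,\fdnbt$.

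It remains to identify the prefactor $e^{-\pii \frac{\tau}{\beta(p)}\nu(p)}$. For $p\ge 2$ we have $\nu(p) = 2^{p-1}+2^{p-2} = 3\beta(p)/2$, so $\frac{\tau}{\beta(p)}\nu(p) = \frac{3\tau}{2}$ and $e^{-\pii \cdot 3\tau/2} = e^{-3\pi\mathrm{i}\tau} = (-1)^{3\tau} = (-1)^\tau$; the cases $p=0$ ($\tau=0$, factor $1$) and $p=1$ ($\tau=0$, factor $1$) are trivially consistent with $(-1)^\tau$, and negative $p$ follows by the adjoint/conjugation property exactly as in the proofs of Proposition \ref{prop:pnotp} and Corollary \ref{pro:nul}. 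This yields \eqref{eq:dostloc}. The only mildly delicate point — the ``main obstacle'' — is making the band-counting bijection $\{|k| < 2\beta(\Raggio)\} \leftrightarrow \{|p|\le \Raggio\}$ airtight, i.e. checking that $\bigcup_{|p|\le \Raggio}[\beta(|p|), 2\beta(|p|)-1]$ (with the sign conventions $\nu(-p)=-\nu(p)$, $\beta(-p)=\beta(p)$, and the special bands for $p=0,1$) is exactly $\{k\in\Z : |k| < 2\beta(\Raggio)\} = \{-(2\beta(\Raggio)-1),\dots, 2\beta(\Raggio)-1\}$; this is a direct consequence of the partition of $\N$ noted in Section \ref{sec:propbase} (since $2\beta(\Raggio) = \beta(\Raggio+1) = \nu(\Raggio+1) - \beta(\Raggio+1)/2 + \dots$, the top band included is exactly $[\beta(\Raggio), 2\beta(\Raggio)-1]$), but it is worth stating explicitly so that the finiteness of the sum — and hence the legitimacy of interchanging $\ststst_{\cchi_\Raggio}$ with the sum without invoking continuity beyond linearity — is beyond doubt.
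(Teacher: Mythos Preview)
Your proof is correct and follows essentially the same route as the paper's: expand $T_\Raggio f$ in the DOST basis over $|p'|\le\Raggio$, apply $\ststst_{\cchi_\Raggio}$ linearly, use Proposition \ref{pro:chibas} to kill the $p'\neq p$ terms, then Corollary \ref{pro:nul} to collapse the $\tau'$-sum, and finally identify the phase $e^{-\pii\nu(p)\tau/\beta(p)}=(-1)^\tau$ from $\nu(p)=\pm\tfrac{3}{2}\beta(p)$. Your added care in making the band-counting bijection $\{|k|<2\beta(\Raggio)\}\leftrightarrow\{|p|\le\Raggio\}$ explicit (and hence the finiteness of the sum) is a welcome clarification but not a departure from the paper's argument.
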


\begin{proof}
Since the functions $\pt{\dnbt}$ form an orthonormal basis of $\Lloi$, we have
\[
  f(t)= \sum_{p', \tau'} \pt{f, D_{p',\tau'}}_{\Lloi} D_{p',\tau'}(t), \quad a.e..
\]
Notice that
\[
  \pt{T_{R}f}(t)= \sum_{|p'|\leq R} \sum_{\tau'=0}^{\beta(p')-1}f_{p',\tau'} D_{p',\tau'}(t), 
\]
where $f_{p',\tau'}=\pt{f, D_{p',\tau'}}_{\Lloi}$.
By linearity, 
\begin{align*}
  \pt{\ststst_{\cchi_{\Raggio}} T_R f}\pt{\frac{\tau}{\beta\pt{p}}, \nu(p)}=
  \sum_{|p'|\leq R} \sum_{\tau'=0}^{\beta(p')-1} f_{p',\tau'} \pt{\ststst_{\cchi_{\Raggio}} D_{p',\tau'}}\pt{\frac{\tau}{\beta\pt{p}}, \nu(p)}.
\end{align*}
If $|p|\leq \Raggio$, by Proposition \ref{pro:chibas}, 
\begin{align}
  \label{eq:quasi}
  \pt{\ststst_{\cchi_{\Raggio}} T_R f}\pt{\frac{\tau}{\beta\pt{p}}, \nu(p)}=
  \sum_{\tau'=0}^{\beta(p)-1}f_{p,\tau'} \pt{\ststst_{\cchi_{\Raggio}} D_{p,\tau'}}
  \pt{\frac{\tau}{\beta\pt{p}}, \nu(p)}.
\end{align}
In Corollary \ref{pro:nul} we proved that 
\begin{align*}
D_{p, \tau'}\left( \frac{\tau}{\beta(p)}\right)= \sqrt{\beta(p)}\delta_0(\tau-\tau').
\end{align*}
Therefore, \eqref{eq:quasi} turns into
\begin{align*}
  \pt{\ststst_{\cchi_{\Raggio}} T_Rf}\pt{\frac{\tau}{\beta(p)}, \nu(p)}& = e^{- \pii \nu(p) \frac{\tau}{\beta(p)}}\fdnbt\,\dnbt\pt{\frac{\tau}{\beta(p)}}\\
  &= e^{- \pii \nu(p) \tau/\beta(p)} \sqrt{\beta(p)}\fdnbt.
\end{align*}
Since $\nu(p)= \pm 3/2 \,\beta(p)$,
\begin{align*}
e^{- \pii \nu(p) \tau/\beta(p)}= e^{\mp 3 \pi \mathrm{i}\,  \tau }= (-1)^\tau.
\end{align*}
Therefore, finally, 
\begin{align}
\label{eq:stcoeff}
\pt{\ststst_{\cchi_\Raggio} T_Rf}\pt{\frac{\tau}{\beta(p)}, \nu(p)}= (-1)^\tau \sqrt{\beta(p)} \fdnbt, \quad |p|\leq \Raggio. 
\end{align}
\end{proof}

%
The definition of $\cchi_{\Raggio}$ in \eqref{eq:phiR} 
implies that 
\[
 \norm{\cchi_R- \cchi }_Z\to 0.
\]
Moreover, it is immediate that,  for all $f\in \Lloi$, $\norm{T_R f- f}_{\Lloi}\to 0$.
Therefore, by the continuity properties of $S$, proven in Theorem \ref{thm:contl2},
 for all $f\in \Lloi$
\begin{align}
 \label{eq:limnorm}
 \norm{\pt{S_{\cchi_{\Raggio}} T_R f} - \pt{S_{\cchi} f}}_Y\to 0, \quad \Raggio\to \infty.
\end{align}

%
%
\noindent Equations \eqref{eq:stcoeff} and \eqref{eq:limnorm} clarify the representation of the $\ststst$-transform of a periodic signal $f$ via the Stockwell coefficients $\fdnbt$. 
Moreover, \eqref{eq:stcoeff} explains the role of the multiplicative factor $(-1)^\tau$
in front of the basis functions $\dnbt$ used by R. G. Stockwell in \cite{ST07}.

\begin{rem}
 In the paper we have always considered a symmetric partition of the frequency from the positive and negative side.
 Actually, the algorithm is slightly different: see \cite{WAth,WO08,WO09} for details.
\end{rem}

\section{Window Adapted Basis Construction}
\label{sec:baseephi}

\begin{figure}[t]
\centering
\includegraphics[width=.8\textwidth]{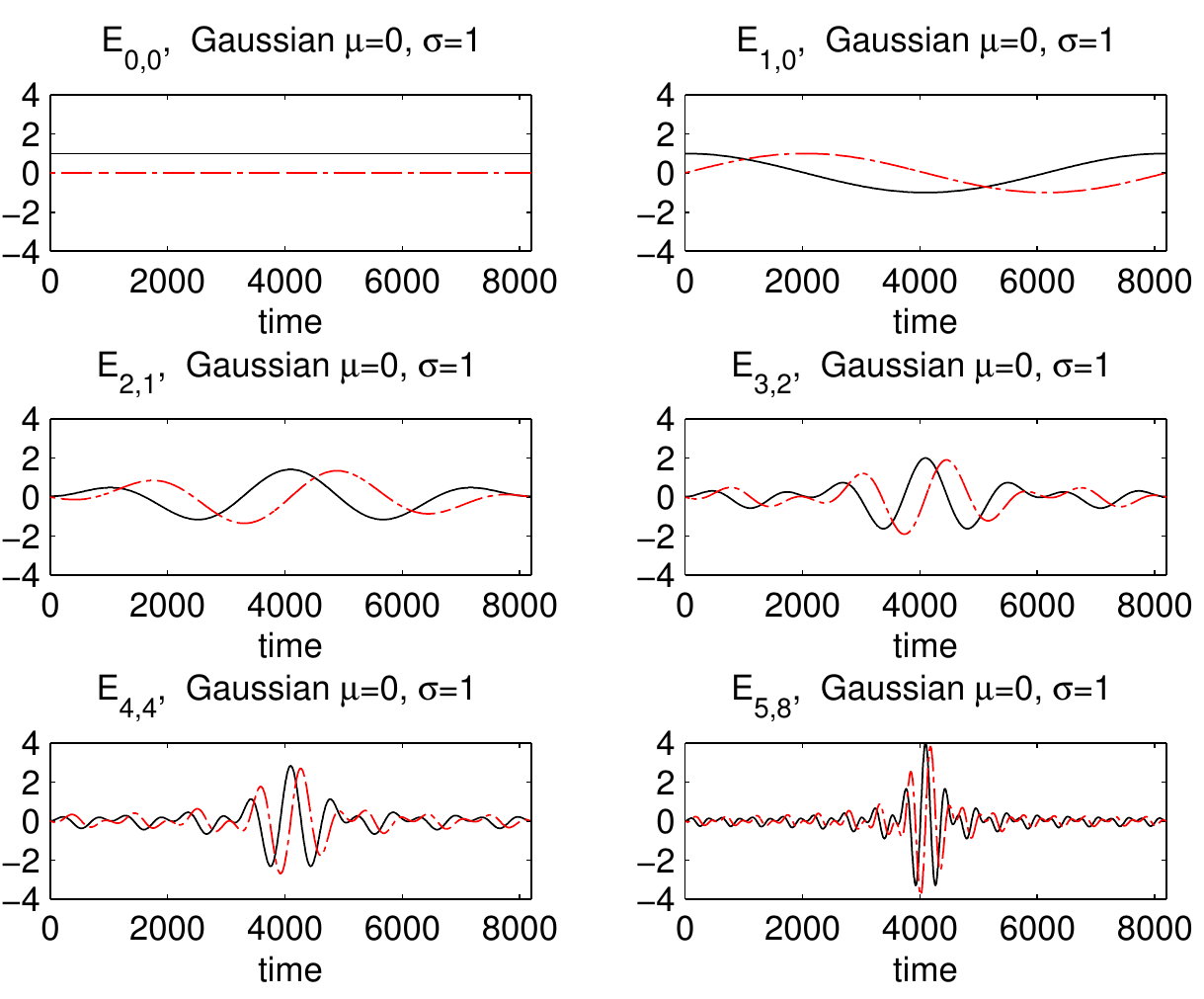}
\caption{$\ephipt$ basis functions in increasing frequency $p$-bands. Black line = real, red line = imaginary. 
$\widehat{\phi}$ is a truncated Gaussian window with $\mu=0 $ and $\sigma =1 $. Notice the similarities with Figure 1. Indeed, in this case the ratio 
$\pt{\delta/M}^2$  
is approximately $0.8836$
and $\pt{M/\delta}^2$ is approximately $1.13173$, see Theorem \ref{thm:ultimo}.}
\label{fig:04}
\end{figure}

In this section we  determine a basis of $L^2([0,1])$ adapted to an admissible window $\phi$
satisfying Assumption \ref{Ass:wind}.
As explained in the introduction, we want to find a basis $E^\phi_p$
such that $\ststst_\phi E^\phi_p$ is \emph{local}
both in time and in frequency and such that the evaluations of 
all coefficients $f^\phi_p=(f, E^\phi_p)_{\Lloi}$ is fast -- $\mathcal{O}\pt{N \log N}$. 
In Section \ref{sec:propbase}, we proved that $\dnbt$ is a basis of 
$\Lloi$ which is \emph{local} both in time and in frequency.
Moreover, in Section \ref{sec:discr}, we showed that the natural 
discretization of the time-frequency domain 
in this setting is given by the dyadic decomposition in 
the frequency domain and the $\tau/\beta(p)$ grid in the time domain.
So, it is natural to change our task in finding a basis $\ephipt$ such that
\begin{align}
\label{eq:condnu}
 \pt{\ststst_\phi \ephipt }\pt{b, \nu(p)} &= e^{- \pii b \nu(p)} \dnbt(b).
\end{align}
As in the previous section, in order to obtain 
the punctual evaluation \eqref{eq:condnu}, we introduce
a sequence $\ptg{\phi_\Raggio}_{\Raggio=1}^{\infty}$ as in Lemma \ref{lem:seq}.
In order to keep the notation easier, we set
\[
  c^{\phi}_{p,j}\pt{\nu(p)}=c^{\phi_\Raggio}_{p,j}\pt{\nu(p)}, \quad  |p|\leq \Raggio.
\]
Notice that this definition makes sense in view of \eqref{eq:compat}.
\begin{thm}\label{th:basevariabile}
 Let $\phi$ be an admissible window satisfying Assumption \ref{Ass:wind} and
 \begin{align}
  \label{eq:defbasevar}
  \ephipt \pt{t} &= \frac{1}{\sqrt{\beta(p)}} \sum_{j=0}^{\beta(p)-1} \left[c^\phi_{p,j}(\nu\pt{p})\right]^{-1} e^{\pii \pt{\beta(p)+j}\pt{t- \frac{\tau}{\beta(p)}}}.
 \end{align}
Then
\begin{equation}
\label{eq:basevar}
 \pt{\ststst_{\phi_\Raggio} \ephipt}(b, \nu(p))= e^{- \pii  b \nu(p)}  \dnbt(b), \quad |p|\leq \Raggio.
\end{equation}
Moreover, 
\[
 \bigcup_{p \in \Z} E^\phi_p,
\]
where
\[
 E_p^\phi= \ptg{E^\phi_{p, \tau}}_{\tau=0, \ldots, \beta(|p|)-1}
\]
is a basis of $\Lloi$.
\end{thm}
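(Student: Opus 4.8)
The statement has three parts: first, verify the punctual identity \eqref{eq:basevar}; second, show each finite family $E_p^\phi = \{E_{p,\tau}^\phi\}_{\tau=0}^{\beta(|p|)-1}$ spans the same frequency block as $D_p$; third, conclude that the union over $p \in \Z$ is a basis of $\Lloi$. The first part is essentially bookkeeping: apply the explicit formula \eqref{eq:sDc} for $\ststst_\phi$ of a pure exponential block to the definition \eqref{eq:defbasevar}. Since $E_{p,\tau}^\phi$ is, up to the factor $[c^\phi_{p,j}(\nu(p))]^{-1}$ in each summand, the same linear combination of exponentials $e^{\pii(\beta(p)+j)(t-\tau/\beta(p))}$ that defines $\dnbt$, the coefficients $c^{\phi_\Raggio}_{p,j}(\nu(p)) = \overline{\widehat{\phi_\Raggio}((\beta(p)+j-\nu(p))/\nu(p))}$ produced by $\ststst_{\phi_\Raggio}$ at $\xi = \nu(p)$ exactly cancel the inserted inverse factors $[c^\phi_{p,j}(\nu(p))]^{-1}$ (these agree by \eqref{eq:compat} once $|p| \le \Raggio$). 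What remains inside the sum is precisely $\frac{1}{\sqrt{\beta(p)}}\sum_j e^{\pii(\beta(p)+j)(b-\tau/\beta(p))} = \dnbt(b)$, with the residual phase $e^{-\pii b\nu(p)}$ out front, which is \eqref{eq:basevar}. One must check $c^\phi_{p,j}(\nu(p)) \ne 0$ so that the inverse makes sense: this is exactly Assumption \ref{Ass:wind}, since the argument $(\beta(p)+j-\nu(p))/\nu(p) = \frac{2}{3}(1 + \frac{j}{\beta(p)}) - 1 \in (-\frac13, \frac13)$ for $j = 0,\dots,\beta(p)-1$ when $p \ge 2$, where $\widehat\phi$ is nonzero. (For $p = 0,1$ and negative $p$, handle by the same small computation / the adjoint property.)

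\textbf{Spanning each block.} From \eqref{eq:defbasevar}, $E_p^\phi \subseteq \Span\{e^{\pii k t}\}_{k \in [\beta(p), 2\beta(p)-1]}$, a $\beta(p)$-dimensional space. To see the $\beta(p)$ functions $\{E_{p,\tau}^\phi\}_{\tau=0}^{\beta(p)-1}$ are linearly independent I would project a relation $\sum_\tau c_\tau E_{p,\tau}^\phi = 0$ onto each basis vector $e^{\pii(\beta(p)+j)t}$, obtaining $[c^\phi_{p,j}(\nu(p))]^{-1}\sum_{\tau} c_\tau e^{-\pii \tau j/\beta(p)} = 0$ for each $j$. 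Since $[c^\phi_{p,j}(\nu(p))]^{-1} \ne 0$, this reduces to the same Vandermonde system \eqref{eq:vander} that appeared in the proof of Theorem \ref{thm:basidost}, with distinct nodes $e^{-\pii l/\beta(p)}$, so $c_\tau \equiv 0$. Hence $E_p^\phi$ is a basis of the $p$-th frequency block, just as $D_p$ was.

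\textbf{Assembling the global basis.} The blocks $\Span\{e^{\pii k t}\}_{k \in [\beta(p), 2\beta(p)-1]}$ over $p > 0$, together with their conjugates for $p < 0$ and the one-dimensional pieces at $p = 0, 1$, partition $\Z$ (this is the partition of $\N$ recalled in Section \ref{sec:propbase} together with its reflection), so $\bigcup_p \Span(\text{block}_p)$ is dense in $\Lloi$ and the blocks are mutually orthogonal. Since each $E_p^\phi$ spans $\text{block}_p$, the union $\bigcup_{p \in \Z} E_p^\phi$ spans a dense subspace, and within each block it is a (finite) basis; therefore it is a Schauder basis of $\Lloi$. The one genuinely delicate point is the status of the regularization: \eqref{eq:basevar} is stated for the regularized window $\phi_\Raggio$ with $|p| \le \Raggio$, whereas the basis $E_{p,\tau}^\phi$ is defined directly via the numbers $c^\phi_{p,j}(\nu(p))$; I would note, as the paragraph before the theorem does, that $c^\phi_{p,j}(\nu(p)) = c^{\phi_\Raggio}_{p,j}(\nu(p))$ for all $\Raggio \ge |p|$ by \eqref{eq:compat}, so the definition \eqref{eq:defbasevar} is unambiguous and independent of $\Raggio$; the basis property itself, being purely a statement about the finite-dimensional blocks, does not involve $\Raggio$ at all. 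The main obstacle is thus not any hard estimate but keeping the regularization and the nonvanishing of $c^\phi_{p,j}(\nu(p))$ straight; once that is in place the argument is the Vandermonde computation of Theorem \ref{thm:basidost} transported through the nonzero weights $[c^\phi_{p,j}(\nu(p))]^{-1}$.
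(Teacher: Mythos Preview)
Your proof is correct and covers all the required points. The one place where you take a different route from the paper is the linear-independence step for $\{E^\phi_{p,\tau}\}_\tau$. You project a hypothetical dependence relation onto each Fourier mode $e^{\pii(\beta(p)+j)t}$, cancel the nonzero scalar $[c^\phi_{p,j}(\nu(p))]^{-1}$, and arrive back at the Vandermonde system \eqref{eq:vander} from Theorem~\ref{thm:basidost}. The paper instead applies $\ststst_{\phi_\Raggio}(\,\cdot\,)(b,\nu(p))$ to the dependence relation and invokes the identity \eqref{eq:basevar} just established, obtaining $\sum_\tau \alpha_\tau \dnbt(b)=0$ and then citing the already-proven linear independence of $D_p$. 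The paper's argument is a little slicker, since it recycles \eqref{eq:basevar} and never reopens the Vandermonde computation; yours is more elementary and self-contained, since the basis property is obtained purely from the Fourier expansion, with no reference to the transform or to the regularizing sequence $\phi_\Raggio$. One minor slip: for $j=0$ the argument $(\beta(p)+j-\nu(p))/\nu(p)$ equals $-\tfrac13$ exactly, not a point of the open interval $(-\tfrac13,\tfrac13)$; the value there is supplied by the regularized window $\widehat{\phi_\Raggio}$ and the left limit in Assumption~\ref{Ass:wind}, not by the open-interval nonvanishing you cite.
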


\begin{figure}[t]
\centering
\includegraphics[width=.8\textwidth]{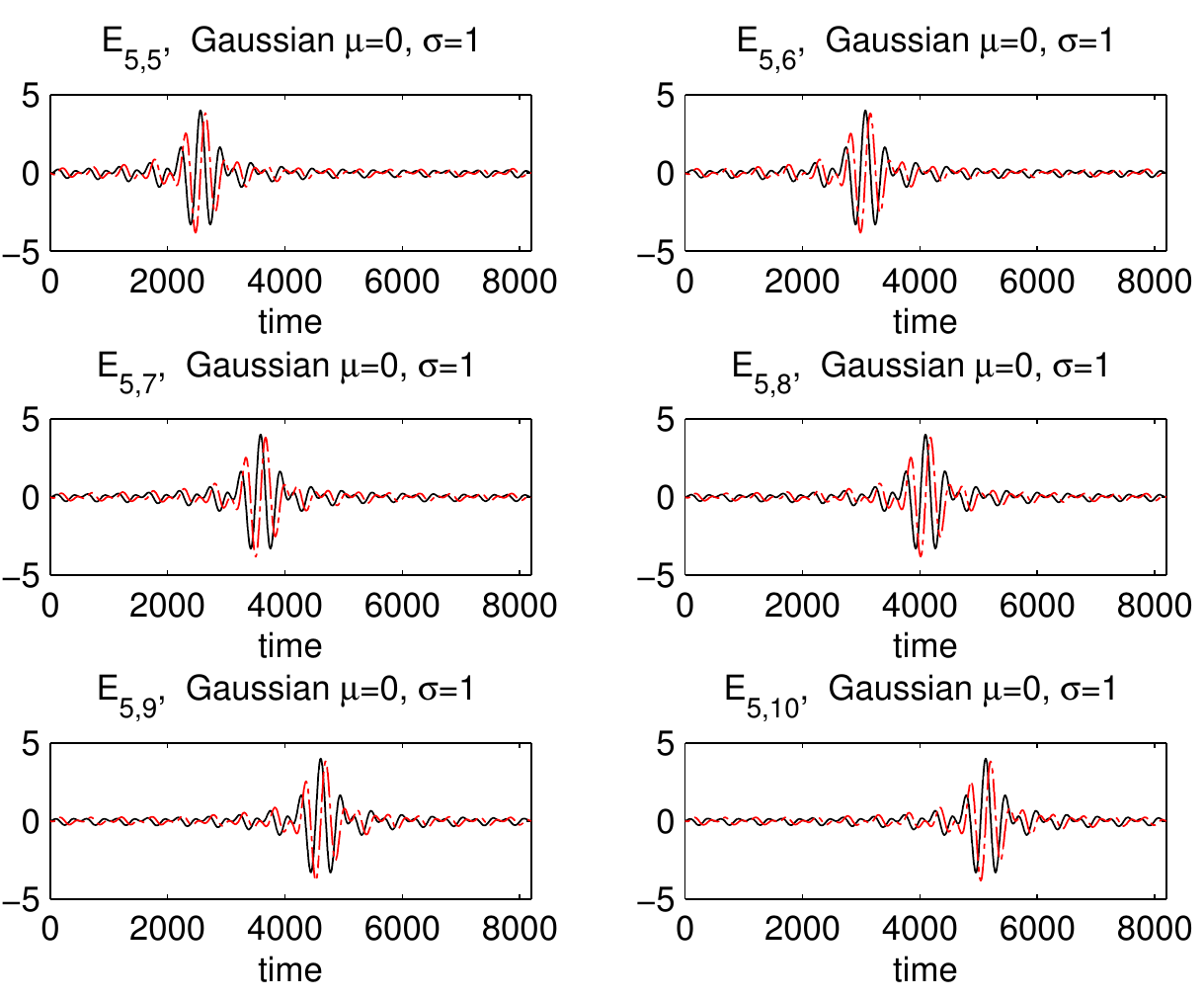}
\caption{$\ephipt$ basis functions in the same frequency $p$-band ($p=5$). Black line = real, red line = imaginary. 
$\widehat{\phi}$ is a truncated Gaussian window with $\mu=0 $ and $\sigma =1 $. See Figure 2 for comparison. }\label{fig:05}
\end{figure}

\begin{rem}
\label{rem:dost}
  Take $\cchi$ as in \eqref{eq:phiR}, then 
  \begin{align*}
  c^{\cchi}_{p,j} \pt{\nu\pt{p}} = 1,\qquad 
  \end{align*}
  for all $p$ and $j$.
  So, 
  by \eqref{eq:defbasevar} and \eqref{eq:Dpriscritta}, 
  \begin{align*}
    \ecchipt \pt{t} =& \frac{1}{\sqrt{\beta(p)}} \sum_{j=0}^{\beta(p)-1} \left[c^{\cchi}_{p,j}(\nu\pt{p})\right]^{-1} e^{\pii \pt{\beta(p)+j}\pt{t- \frac{\tau}{\beta(p)}}}\\
    = & \frac{1}{\sqrt{\beta(p)}} \sum_{j=0}^{\beta(p)-1} e^{\pii \pt{\beta(p)+j}\pt{t- \frac{\tau}{\beta(p)}}}
    \\ 
    = & \dnbt (t).
  \end{align*}
  Hence, the functions $E_{p,\tau}^\phi$ are a proper generalization of the DOST functions.
\end{rem}

\begin{figure}[t]
\centering
\includegraphics[width=.8\textwidth]{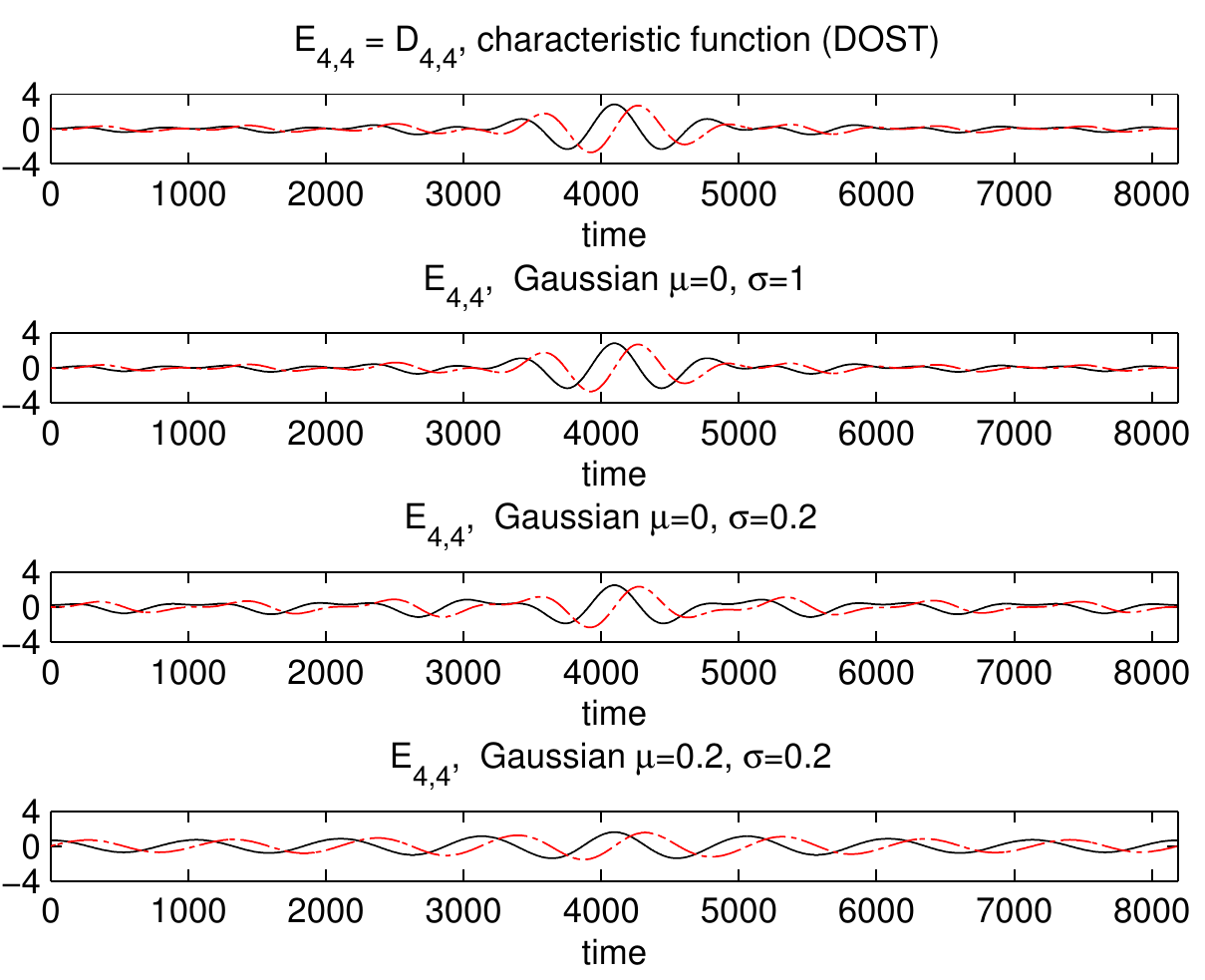}
\caption{$\ephipt$ basis functions with $p=4$ and $\tau = 4$ with different windows. 
Black line = real, red line = imaginary. 
The Fourier transform of $\widehat{\phi}$ is $\chi_{(-1/3,1/3)}$ in the first plot, then a truncated Gaussian with varying $\mu$ and $\sigma$.}
\label{fig:07}
\end{figure}

\begin{proof}
By equation \eqref{eq:sDc}, it follows that the functions $\ephipt$ do satisfy \eqref{eq:basevar}.
 So, we only need to prove that  $\bigcup_{p\in \Z} E^\phi_p$ is a basis of $\Lloi$. 
 
 Notice that
 \begin{align*}
  E^\phi_p &\subseteq \Span \ptg{ e^{\pii  k t}}_{k \in \ptq{\beta(p),2 \beta(p)-1}} = \Span \ptg{ \dnbt }_{\tau=0, \ldots, \beta(p)-1}.
 \end{align*}
It is sufficient to check that $E^\phi_p$ is a linear independent set.
Let us assume that there exist $\{\alpha_\tau\}_{\tau=0}^{\beta(p)-1}$ such that
\begin{align*}
 \sum_{\tau=0}^{\beta(p)-1} \alpha_\tau \ephipt \pt{t} =0.
\end{align*}
Then, by \eqref{eq:basevar}, for $|p| \leq \Raggio $, we obtain
\begin{align*}
0  &=\pt{\ststst_{\phi_{\Raggio}}  \sum_{\tau=0}^{\beta(p)-1} \alpha_\tau \ephipt}\pt{b, \nu(p)}\\
   &= \sum_{\tau=0}^{\beta(p)-1} \alpha_\tau \pt{\ststst_{\phi_\Raggio} \ephipt }\pt{b, \nu(p)}\\
   &= e^{-\pii b \nu\pt{p}} \sum_{\tau=0}^{\beta(p)-1} \alpha_\tau \,  \dnbt (b).
\end{align*}
Hence,
\begin{align}
\label{eq:contradiction}
   \sum_{\tau=0}^{\beta(p)-1} \alpha_\tau \dnbt(b) = 0 .
\end{align}
Since $\dnbt$ is a basis, \eqref{eq:contradiction} implies that $\alpha_\tau$ are all zeros. That is, $\ephipt$ are linear independent.
\end{proof}

\begin{pipi}
\label{prop:comput}
Let $\ephipt$ as in Theorem \ref{th:basevariabile} and let $f$ be a finite signal. 
Then the evaluation of the coefficients
\begin{align*}
 f^\phi_{p,\tau} = \pt{f, \ephipt }_{\Lloi}
\end{align*}
has computational complexity $\mathcal{O}(N \log N)$, where $N$ is the length of $f$.
\end{pipi}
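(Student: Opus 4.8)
The plan is to compute the whole family of coefficients $\fphipt$ by a single length-$N$ fast Fourier transform of the signal followed by one short fast Fourier transform in each dyadic frequency band, in the spirit of the algorithm of Y. Wang and J. Orchard \cite{WO09}, and then to check that the costs add up to $\mathcal{O}(N\log N)$.

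First I would pass to the frequency side. Since $\ephipt$ is a trigonometric polynomial supported on the $p$-band $\ptq{\beta(p),2\beta(p)-1}$, the same computation that leads to \eqref{eq:foudos} applied to \eqref{eq:defbasevar} gives
\[
  \pt{\F\ephipt}\pt{\beta(p)+j}=\frac{1}{\sqrt{\beta(p)}}\,\ptq{\cpfj(\nu(p))}^{-1}e^{-\pii(\beta(p)+j)\tau/\beta(p)},\qquad j=0,\dots,\beta(p)-1,
\]
and $\pt{\F\ephipt}(k)=0$ for every $k$ outside that band; here the numbers $\cpfj(\nu(p))$ are the nonzero quantities that already make the basis \eqref{eq:defbasevar} well defined (Assumption \ref{Ass:wind} and the estimates of Proposition \ref{prop:pnotp}). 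By Parseval for Fourier series, and using $e^{\pii\tau}=1$,
\[
  \fphipt=\pt{f,\ephipt}_{\Lloi}=\sum_{k\in\Z}\widehat f(k)\,\overline{\pt{\F\ephipt}(k)}=\frac{1}{\sqrt{\beta(p)}}\sum_{j=0}^{\beta(p)-1}\widehat f(\beta(p)+j)\,\overline{\ptq{\cpfj(\nu(p))}^{-1}}\;e^{\pii j\tau/\beta(p)}.
\]
The crucial observation is that, for fixed $p$, the right-hand side is --- up to the scalar $1/\sqrt{\beta(p)}$ --- the value at $\tau$ of the discrete Fourier transform of the finite sequence $j\mapsto\widehat f(\beta(p)+j)\,\overline{\ptq{\cpfj(\nu(p))}^{-1}}$, $j=0,\dots,\beta(p)-1$; for negative $p$ one argues identically using the adjoint relation of Section \ref{sec:propbase}.

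Second I would read off the algorithm. Given a signal of length $N$: compute all the Fourier coefficients $\widehat f(k)$ once, with an FFT of cost $\mathcal{O}(N\log N)$; precompute the numbers $\cpfj(\nu(p))=\overline{\widehat\phi\pt{(\beta(p)+j-\nu(p))/\nu(p)}}$, a total of $\sum_p\beta(|p|)=\mathcal{O}(N)$ evaluations of $\widehat\phi$; then, for each band $p$ with $|p|\ge2$ (the bands $p=0,1$ are single frequencies, hence $\mathcal{O}(1)$ each), assemble the sequence $j\mapsto\widehat f(\beta(p)+j)\,\overline{\ptq{\cpfj(\nu(p))}^{-1}}$ in $\mathcal{O}(\beta(p))$ operations and apply one FFT of size $\beta(p)=2^{|p|-1}$ --- a power of two, so the radix-$2$ FFT applies --- to obtain the entire block $\ptg{\fphipt}_{\tau=0}^{\beta(p)-1}$ at once, at cost $\mathcal{O}(\beta(p)\log\beta(p))$, followed by an $\mathcal{O}(\beta(p))$ rescaling.

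Finally I would sum over the bands. Because the bands $\ptq{\beta(p),2\beta(p)-1}$ partition the frequency range of a length-$N$ signal on the positive and the negative side (Section \ref{sec:propbase}), one has $\sum_p\beta(|p|)=\mathcal{O}(N)$ and $\beta(|p|)\le N$ for each $p$, so
\[
  \sum_p\mathcal{O}\pt{\beta(|p|)\log\beta(|p|)}\le(\log N)\sum_p\beta(|p|)=\mathcal{O}(N\log N).
\]
Adding the $\mathcal{O}(N\log N)$ of the initial FFT and the $\mathcal{O}(N)$ of the precomputation yields the claimed $\mathcal{O}(N\log N)$ bound. The only genuinely substantive step is the structural one identified above --- that within each band the $\tau$-dependence is exactly a DFT of length $\beta(p)$, so a single FFT per band returns the whole block of coefficients; the remaining work (the estimate $\sum_p\beta(p)\log\beta(p)=\mathcal{O}(N\log N)$, the radix-$2$ applicability, and the bookkeeping on the $\cpfj(\nu(p))$) is routine.
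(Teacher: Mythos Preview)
Your argument is correct and follows essentially the same route as the paper: you pass to the frequency side via Parseval, recognize that within each $p$-band the $\tau$-dependence is a length-$\beta(p)$ DFT of the windowed Fourier coefficients, and then sum the per-band FFT costs. The paper phrases the last step slightly differently --- it rewrites $\fphipt=(\tilde f,\dnbt)_{\Lloi}$ with $\tilde f=\F^{-1}R^\phi\hat f$ and then invokes the Wang--Orchard FDOST algorithm as a black box --- but the subsequent remark unpacks this into exactly the band-by-band FFT computation you give.
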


\begin{proof}
By Plancharel's Theorem we can write
\[
 f^{\phi}_{p, \tau}= \pt{f, \ephipt}_{\Lloi}= \pt{\hat{f}, \widehat{\ephipt}}_{l^2(\Z)}.
\]
Using the explicit expression of the basis $\ephipt$, we obtain
\begin{align*}
 f^{\phi}_{p, \tau}&= \pt{\hat{f}, \frac{1}{\sqrt{\beta(p)}}\sum_{j=0}^{\beta(p)-1} \left[c^\phi_{p,j}(\nu(p))\right]^{-1} e^{- \pii  ( \beta(p)+j)(\tau/\beta(p)) }\delta_{\beta(p)+j}(\cdot)}_{l^2(\Z)} \\
 &= \frac{1}{\sqrt{\beta(p)}}\sum_{j=0}^{\beta(p)-1} \widehat{f}(\beta(p)+j) \overline{\left[c^\phi_{p,j}(\nu(p))\right]^{-1}} e^{ \pii  ( \beta(p)+j)(\tau/\beta(p)) } \\
 &=\pt{ R^\phi \hat{f}, \frac{1}{\sqrt{\beta(p)}}\sum_{j=0}^{\beta(p)-1}  e^{- \pii ( \beta(p)+j)(\tau/\beta(p)) }\delta_{\beta(p)+j}(\cdot)}_{l^2(\Z)} \\
 &=\pt{\F^{-1} R^\phi \hat{f}, \dnbt}_{\Lloi},
\end{align*}
where $R^{\phi}$ is a sequence in $\Z$ such that
\begin{equation}
 \label{eq:rphi}
 R^\phi(\beta(p)+j)= \overline{\left[c^\phi_{p,j}(\nu(p))\right]^{-1}}
\end{equation}
for all $p$ and related $j$.
Hence,  
\begin{equation}
\label{eq:equiv}
 f^\phi_{p, \tau}= \pt{f, \ephipt}_{\Lloi}= \pt{\tilde{f}, \dnbt}_{\Lloi}
\end{equation}
where $\tilde{f}= \F^{-1} R^{\phi} \hat{f}$.
Given $\tilde{f}$, computing \eqref{eq:equiv} using the  FDOST-algorithm introduced in \cite{WO09} has complexity $\mathcal{O}\pt{N \log N}$ and computing $\tilde{f}$ via FFT has complexity $\mathcal{O}\pt{N \log N}$. So, the computational complexity remains $\mathcal{O}\pt{N \log N}$. 
\end{proof}

\begin{figure}[t]
\centering
\includegraphics[width=.8\textwidth]{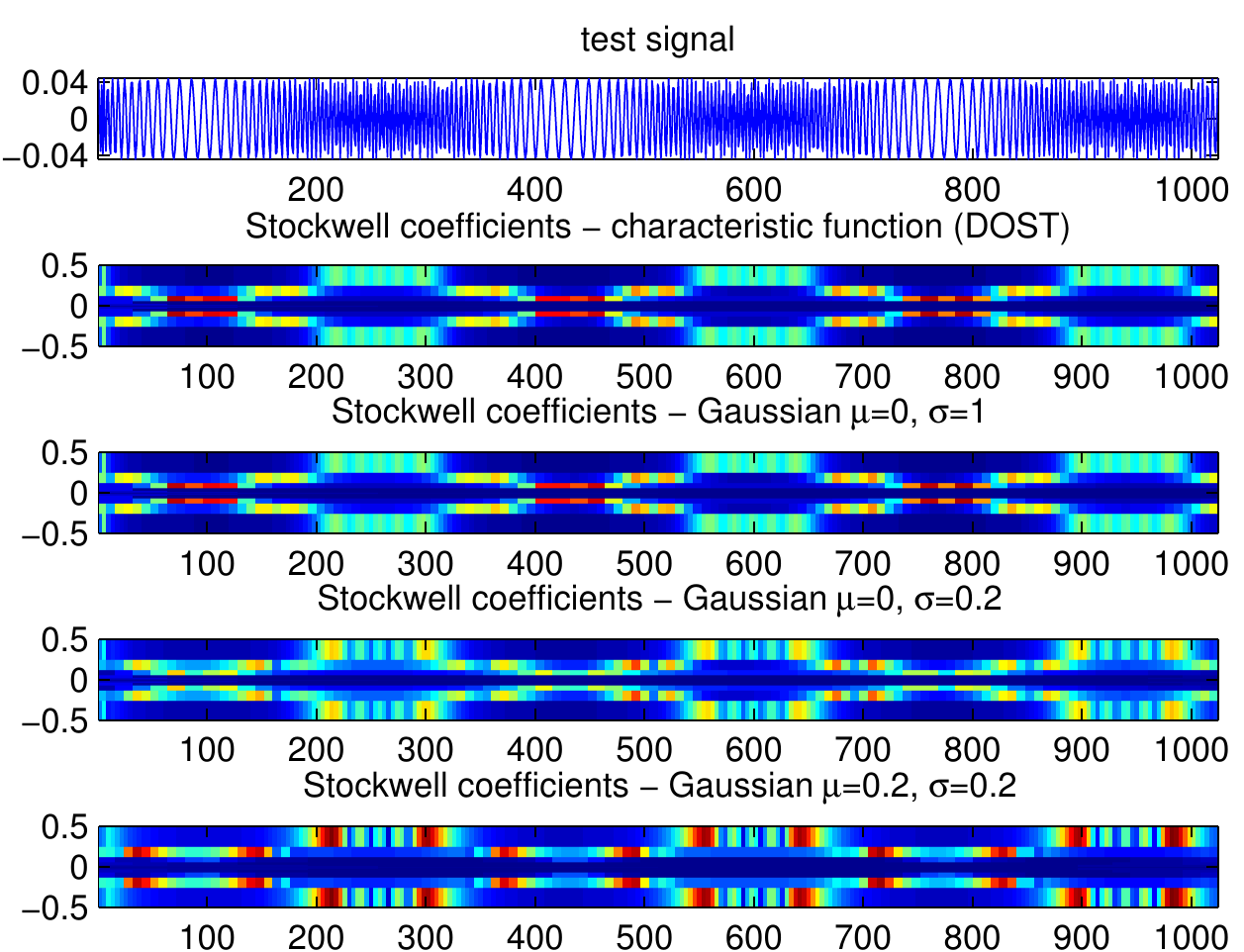}
\caption{Decompositions of a given test signal on different windowed basis.}
\label{fig:03}
\end{figure}

\begin{rem}
It is worth checking explicitly the computational complexity of the algorithm. To perform this task, we start evaluating the column vector $f^\phi_p$ given by 
\begin{align}
\nonumber
 f^\phi_p =& \{ f^\phi_{p, \tau}\}_{\tau=0}^{\beta(p)-1}\\
  \nonumber
  = &\ptg{\pt{f, \ephipt }_{\Lloi}}_{\tau=0}^{\beta(p)-1}\\
	\nonumber
	= &\ptg{\pt{\hat{f}, \widehat{\ephipt }}_{l^2(\Z)}}_{\tau=0}^{\beta(p)-1}\\
\nonumber =& \ptg{\pt{\hat{f},\frac{1}{\sqrt{\beta(p)}}
 \sum_{j=0}^{\beta(p)-1}\left[c^\phi_{p,j}(\nu(p))\right]^{-1} e^{- \pii  ( \beta(p)+j)(\tau/\beta(p)) }\delta_{\beta(p)+j}(\cdot
 )}_{l^2(\Z)}}_{\tau=0}^{\beta(p)-1} \\
 \nonumber
 =&\ptg{\frac{1}{\sqrt{\beta(p)}}\sum_{j=0}^{\beta(p)-1} \widehat{f}(\beta(p)+j)
 \overline{\left[c^\phi_{p,j}(\nu(p))\right]^{-1}} e^{ \pii  ( \beta(p)+j)(\tau/\beta(p)) }}_{\tau=0}^{\beta(p)-1} \\
 \nonumber
 =&\ptg{\frac{1}{\sqrt{\beta(p)}}\sum_{j=0}^{\beta(p)-1} \widehat{f}(\beta(p)+j)\overline{ 
 \left[c^\phi_{p,j}(\nu(p))\right]^{-1}} e^{ \pii  j (\tau/\beta(p)) }}_{\tau=0}^{\beta(p)-1} \\
 \nonumber
 =& \pt{\Fi_{j \mapsto \tau} \pt{ \pt{ R^\phi \; \widehat{f} }|_{\beta(p), \ldots, 2\beta(p)-1}(j) }}\pt{\tau}
\end{align}
where $R^\phi$ is defined as in \eqref{eq:rphi}. Therefore, first we have to perform 
the FFT of the signal $f$ ($\mathcal{O}\pt{N \log N}$),
and the multiplication by $R^\phi$ ($\mathcal{O}(N)$), then at each $p$ band we need to use the FFT to perform the 
anti  Fourier
transform with computational complexity $\mathcal{O}\pt{\beta(p) \log \beta(p) }$. Summing up the contribution 
of each $p$-band
we obtain the computational complexity of $\mathcal{O}(N \log N)$. 
\end{rem}

The basis $\ptg{\ephipt}_{p, \tau}$ is in general not orthogonal nor normal. Nevertheless, we can normalize it setting 
\begin{align}
\label{eq:ephifr}
{\Fphipt}(t)= \frac{\ephipt (t)}{\|\ephipt\|_{L^2([0,1])}},
\end{align}
so that
\begin{align*}
\norm{\Fphipt}_{L^2([0,1])}=1.
\end{align*}
Notice that 
\begin{equation}
\label{eq:normp}
\norm{\ephipt}_{\Lloi}=\norm{E^\phi_{p, \tau'}}_{\Lloi}=N^\phi_p 
\end{equation}
depends just on the $p$-band, not on $\tau$.

The basis $\ptg{\Fphipt(t)}_{p,\tau}$ fails in general to be orthogonal. 
Nevertheless, assuming a mild condition on $\phi$, we can prove that it is a frame.

\begin{thm}\label{thm:ultimo}
 Let $\phi$ be an admissible window function 
 satisfying Assumption \ref{Ass:wind}, and such that
 \begin{align}
 \label{eq:condlim}
  \inf_{\xi\in \pt{-1/3, 1/3}}& \abs{\overline{\widehat{\phi}(\xi)}} \geq \delta >0\\
  \label{eq:condsupp}
  \sup_{\xi\in \pt{-1/3, 1/3}}& \abs{ \overline{\widehat{\phi}(\xi)} }\leq M <\infty
 \end{align}
then the basis $\bigcup_{p \in \Z} F^\phi_p$ is a frame of $\Lloi$, where
\[
 F^\phi_p= \ptg{F^\phi_{p, \tau}}_{\tau=0, \ldots, \beta\pt{ \abs{p} }    }.
\]
In particular
\begin{align*}
 \pt{\frac{\delta}{M }}^2 \norm{f}^2_{\Lloi}\leq \sum_{p, \tau} \abs{\pt{f, F^\phi_{p, \tau}}_{\Lloi}}^2\leq  \pt{\frac{M}{\delta}}^2\norm{f}^2_{\Lloi}. 
\end{align*}
\end{thm}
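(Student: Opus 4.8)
The plan is to realise the normalised family $\ptg{\Fphipt}$ as the image of the orthonormal basis $\ptg{\dnbt}$ of $\Lloi$ (Theorem \ref{thm:basidost}) under a single Fourier multiplier on $\Lloi$ whose symbol is bounded above by $M/\delta$ and below by $\delta/M$, and then to read the frame inequality off Plancherel's identity. First I would compare the rewriting \eqref{eq:Dpriscritta} with the definition \eqref{eq:defbasevar}: on the $p$-frequency band $B_p=\ptq{\beta(p),2\beta(p)-1}\cap\Z$ the Fourier coefficients of $\dnbt$ all have the common modulus $\beta(p)^{-1/2}$, and $\widehat{\ephipt}\pt{\beta(p)+j}=\ptq{\cpfj(\nu(p))}^{-1}\widehat{\dnbt}\pt{\beta(p)+j}$ for $j=0,\dots,\beta(p)-1$. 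Equivalently $\ephipt=\mathcal D\,\dnbt$, where $\mathcal D$ is the Fourier multiplier with symbol $m(\beta(p)+j)=\ptq{\cpfj(\nu(p))}^{-1}$ on each positive band (and its conjugate on the negative bands, $m(0)=1$ on the one-dimensional band $p=0$). By \eqref{eq:compat} the numbers $\cpfj(\nu(p))$ do not depend on the regularising parameter $\Raggio$, so $\mathcal D$ is well defined.

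Next I would bound the symbol. For $p\ge 2$ one has $\nu(p)=\tfrac32\beta(p)$, so $\frac{\beta(p)+j-\nu(p)}{\nu(p)}$ runs over $\ptq{-\tfrac13,\tfrac13-\tfrac{2}{3\beta(p)}}\subset\pt{-\tfrac13,\tfrac13}$ as $j$ ranges over $0,\dots,\beta(p)-1$ (and the ratio is $0$ for $p=1$); hence by \eqref{eq:condlim}–\eqref{eq:condsupp} we get $\delta\le\abs{\cpfj(\nu(p))}\le M$, i.e. $M^{-1}\le\abs{m(k)}\le\delta^{-1}$ for every $k$. Because the Fourier coefficients of $\dnbt$ on $B_p$ all have modulus $\beta(p)^{-1/2}$, it follows that $\pt{N^\phi_p}^2=\norm{\ephipt}_{\Lloi}^2=\beta(p)^{-1}\sum_{k\in B_p}\abs{m(k)}^2\in\ptq{M^{-2},\delta^{-2}}$, which in particular re-proves that $N^\phi_p$ is independent of $\tau$ \eqref{eq:normp}. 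Therefore $\Fphipt=(N^\phi_p)^{-1}\mathcal D\,\dnbt=\widetilde{\mathcal D}\,\dnbt$, where $\widetilde{\mathcal D}$ is the Fourier multiplier with symbol $\widetilde m(k)=m(k)/N^\phi_p$ on the band containing $k$, and a two-line estimate gives $\delta/M\le\abs{\widetilde m(k)}\le M/\delta$ for all $k$.

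Finally, since $\ptg{\dnbt}$ is an orthonormal basis, Plancherel's identity yields, for every $f\in\Lloi$, that $\sum_{p,\tau}\abs{\pt{f,\Fphipt}_{\Lloi}}^2=\sum_{p,\tau}\abs{\pt{\widetilde{\mathcal D}^{*}f,\dnbt}_{\Lloi}}^2=\norm{\widetilde{\mathcal D}^{*}f}_{\Lloi}^2=\sum_{k\in\Z}\abs{\widetilde m(k)}^2\abs{\widehat f(k)}^2$, and the two-sided bound on $\abs{\widetilde m}$ together with $\sum_k\abs{\widehat f(k)}^2=\norm{f}_{\Lloi}^2$ gives exactly $(\delta/M)^2\norm{f}_{\Lloi}^2\le\sum_{p,\tau}\abs{\pt{f,\Fphipt}_{\Lloi}}^2\le(M/\delta)^2\norm{f}_{\Lloi}^2$; the lower bound also forces completeness, so $\bigcup_{p}F^\phi_p$ is a frame with the stated bounds. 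I expect the one genuinely delicate step to be the second one: one must check carefully that the argument $\frac{\beta(p)+j-\nu(p)}{\nu(p)}$ of $\widehat\phi$ never leaves the open interval $\pt{-\tfrac13,\tfrac13}$ so that Assumption \ref{Ass:wind} and \eqref{eq:condlim}–\eqref{eq:condsupp} actually apply, and one must dispose separately of the one-dimensional low-frequency bands $p=0,\pm1$ (where $\nu,\beta$ are defined ad hoc and $\widehat\phi(0)\ne 0$ keeps the corresponding normalised functions isometric); everything else is a routine transcription of Parseval's identity through a bounded invertible multiplier.
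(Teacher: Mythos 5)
Your proposal is correct and follows essentially the same route as the paper: the paper also writes $\pt{f,\Fphipt}_{\Lloi}=\pt{\F^{-1}\widetilde{R^\phi}\hat{f},\dnbt}_{\Lloi}$ with the multiplier sequence $\widetilde{R^\phi}(\beta(p)+j)=\ptq{\overline{c^{\phi}_{p,j}(\nu(p))}}^{-1}/N^\phi_p$ (your $\widetilde{\mathcal D}^{*}$), bounds it between $\delta/M$ and $M/\delta$ via \eqref{eq:condlim}--\eqref{eq:condsupp} and $1/M\le N^\phi_p\le 1/\delta$, and concludes by orthonormality of the DOST basis and Plancherel. The only caveat, which you yourself flag, is the left endpoint: for $j=0$ the argument of $\widehat\phi$ equals $-\tfrac13$, so the bound there comes from the limit condition in Assumption \ref{Ass:wind} rather than from the open-interval hypotheses; this is implicit in the paper as well and does not affect the conclusion.
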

\begin{proof}
 Notice that under the hypothesis \eqref{eq:condlim}, \eqref{eq:condsupp}, by \eqref{eq:normp}
\begin{equation}
 \label{eq:estnorm}
 \frac{1}{M} \leq N^\phi_p \leq \frac{1}{\delta}, \quad \forall{p \in \Z}.
 \end{equation}

 Observe, by a slight variation of \eqref{eq:equiv}, that
\[
 \pt{f,\Fphipt}_{\Lloi}=  \pt{\F^{-1} \widetilde{R^\phi}\hat{f}, \dnbt }_{\Lloi}
\]
where $\widetilde{R^\phi}$ is a sequence such that
\[
\widetilde{R^\phi}(\beta(p)+j)=\frac{R^\phi(\beta(p)+j)}{N^\phi_p} =\frac{\ptq{\overline{c_{p, j}^\phi(\nu(p))}}^{-1}} {N^\phi_p},
\]
 where $N^\phi_p$ is as in \eqref{eq:normp}.
 
If the window $\phi$ satisfies condition \eqref{eq:condsupp}, by \eqref{eq:estnorm}, we have
\begin{align}
\label{eq:normsup}
&\sup_{k \in \Z} \ptg{\abs{\widetilde{R^\phi}(k)}} \leq \frac{M}{\delta}<\infty,\\
\label{eq:norminfi}
&\inf_{k \in \Z} \ptg{\abs{\widetilde{R^\phi}(k)}} \geq \frac{\delta}{M}> 0.
\end{align}
Hence, since $\bigcup_{p \in \Z} D_p $ is an orthonormal basis and since $\F$ is a
unitary operator from $\Lloi$ to $l^2(\Z)$, we obtain
\begin{align*}
 \phantom{=} & \sum_{p, \tau}\abs{ \pt{f, \Fphipt}_{\Lloi} }^2 =\sum_{p, \tau} \abs{ \pt{\F^{-1} \widetilde{R^\phi} \hat{f}, \dnbt}_{\Lloi}}^2= \norm{\F^{-1}\widetilde{R^\phi} \hat{f}}^2_{\Lloi}\\
 = & \norm{\widetilde{R^\phi} \hat{f}}_{l^2(\Z)}  \leq \pt{\sup_{k \in \Z} 
 \ptg{\abs{\widetilde{R^\phi}(k)}}}^2  \norm{\hat{f}}^2_{l^2(\Z)} \leq \pt{\frac{M}{\delta}}^2 \norm{f}^2_{\Lloi},
\end{align*}
and
\begin{align*}
 \phantom{=} & \sum_{p, \tau}\abs{ \pt{f, \Fphipt}_{\Lloi} }^2 =\sum_{p, \tau} \abs{ \pt{\F^{-1} \widetilde{R^\phi} \hat{f}, \dnbt}_{\Lloi}}^2= \norm{\F^{-1}\widetilde{R^\phi} \hat{f}}^2_{\Lloi}\\
 = & \norm{\widetilde{R^\phi} \hat{f}}_{l^2(\Z)}  \geq \pt{\inf_{k \in \Z} \ptg{\abs{\widetilde{R^\phi}(k)}}}^2  \norm{\hat{f}}^2_{l^2(\Z)} \geq \pt{\frac{\delta}{M}}^2 \norm{f}^2_{\Lloi}.
\end{align*}
\end{proof}
Since  $\bigcup_{p \in \Z} F^\phi_p$ forms a frame, it is possible to
obtain abstractly the canonical dual frame, we denote $\widetilde{\Fphipt}$. 
So, following the same scheme of Proposition \ref{prop:DOST} and equation
\eqref{eq:basevar}, we have
\begin{align*}
  \pt{S_{\phi_{\Raggio}} T_R f}&\pt{\frac{\tau}{\beta(p)}, \nu(p)} \\
  &=\pt{S_{\phi}T_{\Raggio}\sum_{p',\tau'}\pt{f, \widetilde{\Fphiptp}}_{\Lloi} 
  \Fphiptp}\pt{\frac{\tau}{\beta(p)},\nu(p)}\\
                            &=\sum_{|p'|\leq \Raggio} \sum_{\tau'=0}^{\beta(p')-1} 
                            \pt{f, \widetilde{\Fphiptp}}_{\Lloi}
                            \pt{S_{\phi_\Raggio} 
                            \Fphiptp}\pt{\frac{\tau}{\beta(p)},\nu(p)}\\
                            &=\sum_{p'\leq\Raggio}\sum_{\tau'=0}^{\beta(p')-1} \pt{f, \widetilde{\Fphiptp}}_{\Lloi}                         
                            \frac{e^{-\pii \frac{\tau}{\beta(p)} \nu(p')}}{N_{p'}} 
                            \dnbtp\pt{\frac{\tau}{\beta(p)}, \nu(p)}\\
                            &=\pt{-1}^{\tau}\sqrt{\beta(p)} \frac{\pt{f, \widetilde{\Fphipt}}_{\Lloi}}{N_p}, \quad |p|\leq\Raggio .
\end{align*}

\begin{rem}
Notice that, when in equations \eqref{eq:condlim} and \eqref{eq:condsupp} $\delta=M$, we a have a tight-frame.
In the case of the DOST basis, $i.e.$  $\ecchipt = \dnbt$ it is clear that  $\delta=M=1$. So $\dnbt$ is a tight-frame. Actually, we have proven more: $\dnbt$ is an orthonormal basis.
\end{rem}
\section*{Acknowledgements}
The authors thank M. Berra, P. Boggiatto, E. Cordero, V. Giannini, M. Lupini, F. Nicola, A. Vignati, M. W. Wong and H. Zhu  
for fruitful discussions and comments.
We are grateful to the referees for a number of helpful 
suggestions for improvement in the article, in particular 
for remarks on Theorem \ref{thm:contl2} and Theorem \ref{thm:ultimo}.

The first author has been supported by the Gruppo  Nazionale per l'Analisi Matematica, la Probabilit\`a e 
le loro  Applicazioni (GNAMPA) of the Istituto Nazionale di Alta Matematica (INdAM) and by a Postdoc scholarship of the Universit\`a degli Studi di Torino.

The second author was supported by grants from the Doctoral School of Sciences and
Innovative Technologies, Ph.D. Program in Mathematics of Universit\`a degli Studi di Torino.

\bibliography{bibliodost2}
\bibliographystyle{abbrv}

\end{document}